\documentclass[preprint,3p]{elsarticle}

\usepackage{graphicx}
\usepackage{amssymb}
\usepackage{tikz}
\usetikzlibrary{positioning}
\usepackage{geometry}

\usepackage{comment}

\usepackage{lineno}

\usepackage{amsmath}
\usepackage{amsthm}
\usepackage{epsfig}
\usepackage{circuitikz}
\ctikzset{
    voltage=american, 
    current=american
}
\usepackage{mathrsfs}

\usepackage{psfrag}
\usepackage{color}

\newtheorem{theorem}{Theorem}

\newtheorem{lemma}[theorem]{Lemma}
\newtheorem{example}[theorem]{Example}
\newtheorem{definition}[theorem]{Definition}

\newtheorem{corollary}[theorem]{Corollary}
\newtheorem{remark}{Remark}

\newtheoremstyle{algstyle}%
  {10mm}       % measure of space to leave above the theorem. E.g.: 3pt
  {10mm}       % measure of space to leave below the theorem. E.g.: 3pt
  {\tt}   % name of font to use in the body of the theorem
  {0pt}        % measure of space to indent
  {\bfseries}  % name of head font
  {\newline}   % punctuation between head and body
  {10mm}       % space after theorem head
  {\thmname{#1}\thmnumber{ #2}\thmnote{ (#3)}}          
\theoremstyle{algstyle}

\newtheoremstyle{algdashstyle}%
  {10mm}       % measure of space to leave above the theorem. E.g.: 3pt
  {10mm}       % measure of space to leave below the theorem. E.g.: 3pt
  {\tt}   % name of font to use in the body of the theorem
  {0pt}        % measure of space to indent
  {\bfseries}  % name of head font
  {\newline}   % punctuation between head and body
  {10mm}       % space after theorem head
  {\thmname{#1}\thmnumber{ #2}$'$\thmnote{ (#3)}}          % Manually specify head
\theoremstyle{algdashstyle}

\begin{document}
\begin{frontmatter}

\title{Free products, principal sums, bipartition complete matroids and the linking operation $(\mathcal{M}_{SP} \vee \mathcal{M}_{PQ}) \times (S \uplus Q)$}

\author{H. Narayanan}
\ead{hn@ee.iitb.ac.in}
\address{Department of Electrical Engineering, Indian Institute of Technology Bombay, Mumbai, India}
\begin{abstract}
In this paper we examine the special class of bipartition complete matroids, which includes free products and principal sums of matroids (Crapo and Schmitt, 2005; Bonin and Kung, 2015). A matroid $\mathcal{M}_{SQ}$ on $S \uplus Q$ is said to be $\{S,Q\}$--complete iff whenever $b^1_S \uplus b^1_Q, \, b^1_S \uplus b^2_Q,$ and $b^2_S \uplus b^1_Q$ are bases of $\mathcal{M}_{SQ},$ we also have $b^2_S \uplus b^2_Q$ as a base of $\mathcal{M}_{SQ}.$
We use the linking operation $\mathcal{M}_{SP} \leftrightarrow_{\mathcal{M}} \mathcal{M}_{PQ} :\equiv (\mathcal{M}_{SP} \vee \mathcal{M}_{PQ}) \times (S \uplus Q)$ (Theja and Narayanan, 2014) for this study. The linking operation ``$\leftrightarrow_{\mathcal{M}}$'' for matroids is a generalization of the composition of electrical multiports along a common set of ports (Narayanan, 1997).
We construct the $\{S, Q\}$--augmentation of an arbitrary matroid and show that this augmentation is again a matroid,  that the dual of the augmentation is the augmentation of the dual matroid and that the $\{S, Q\}$--augmentation of an $\{S, Q\}$--complete  matroid leaves it unchanged. We also show that bipartition completeness is stable under minors in the appropriate bipartitioned sense. We then prove that an $\{S, Q\}$--complete matroid $ \mathcal{M}_{SQ} :\equiv \mathcal{M}_{SP} \leftrightarrow_{\mathcal{M}} \mathcal{M}_{PQ}$ that satisfies a simple condition of ``strong compatibility'' between its factors, admits a minimal linking decomposition $\mathcal{M}_{SQ} = \mathcal{M}_{S\hat{P}} \leftrightarrow_{\mathcal{M}} \mathcal{M}_{\hat{P}Q}$ with $|\hat{P}| = r(\mathcal{M}_{SQ} \circ S) - r(\mathcal{M}_{SQ} \times S),$ and with $\mathcal{M}_{S\hat{P}}, \mathcal{M}_{\hat{P}Q}$ respectively $\{S, \hat{P}\}-$ complete,  $\{\hat{P}, Q\}-$ complete. Finally, we show that free products and principal sums of matroids $\mathcal{M}_{S},\mathcal{M}_{Q}$ are $\{S, Q\}$--complete and give explicit minimal decompositions for them.
\end{abstract}
\begin{keyword}
matroids \sep linking \sep bipartition complete matroids \sep free product \sep principal sum
\MSC[2020] Primary 05B35\sep Secondary 15A03\sep 15A04\sep 94C15
\end{keyword}

\end{frontmatter}
\section{Introduction}
\label{sec:intro}
The operations of composition and decomposition are commonly used to study
larger systems in terms of their ``building blocks.''
In electrical network theory, multiport composition/decomposition is a standard method for analysis and synthesis
(see, for instance, \cite{belevitch68}, \cite{HNarayanan1997}).
This can be generalized to vector spaces as follows.
We may regard a vector $f_S$ as a function from an index set $S$ to a field $F.$ Equivalently, vectors can be regarded as ``row vectors'' with entries from a field, with the columns being elements of the index set.
A vector space $\mathcal{V}_{SP}$ is a collection of vectors $f_{SP}: S \uplus P \rightarrow F$ (where we denote disjoint union by $\uplus$), closed under addition and scalar multiplication.
We define $\mathcal{V}_{SP} \leftrightarrow_{\mathcal{V}} \mathcal{V}_{PQ}$ (where $S \cap Q = \emptyset$) to be the collection of all vectors $(f_S, f_Q)$
such that $(f_S, f_P) \in \mathcal{V}_{SP}$ and $(f_P, f_Q) \in \mathcal{V}_{PQ}$ (\cite{HNarayanan1997}).
For matroids, there is the analogous notion of ``linking.'' We define $\mathcal{M}_{SP} \leftrightarrow_{\mathcal{M}} \mathcal{M}_{PQ}
:\equiv (\mathcal{M}_{SP} \vee \mathcal{M}_{PQ}) \times (S \uplus Q)$ (\cite{STHN2014}), denoting
the contraction operation by $\times$.

In this paper, we use the notion of linking of matroids to study the class of matroids which we call bipartition complete matroids.
A matroid $\mathcal{M}_{SQ}$ on $S \uplus Q$ is said to be $\{S,Q\}$--complete iff
whenever $b^1_S \uplus b^1_Q$, $b^1_S \uplus b^2_Q$, and $b^2_S \uplus b^1_Q$ are bases of $\mathcal{M}_{SQ}$, we also have $b^2_S \uplus b^2_Q$ as a base of $\mathcal{M}_{SQ}.$
We show that the free product and principal sum of matroids (\cite{crapofree}, \cite{boninsemidirect})
are bipartition complete in the appropriate sense.

There is a simple expression for the minimum size of $P'$ such that $\mathcal{V}_{SP} \leftrightarrow_{\mathcal{V}} \mathcal{V}_{PQ} = \mathcal{V}_{SP'} \leftrightarrow_{\mathcal{V}} \mathcal{V}_{P'Q}$: 
$|P'| \geq r((\mathcal{V}_{SP} \leftrightarrow_{\mathcal{V}} \mathcal{V}_{PQ}) \circ S) - r((\mathcal{V}_{SP} \leftrightarrow_{\mathcal{V}} \mathcal{V}_{PQ}) \times S),$ 
where $\circ$ and $\times$ denote respectively the operations of restriction and contraction (\cite{HNarayanan1997}).
For matroids, there is an analogous expression for the lower bound on the size of $|P'|$: 
$|P'| \geq r((\mathcal{M}_{SP} \leftrightarrow_{\mathcal{M}} \mathcal{M}_{PQ}) \circ S) - r((\mathcal{M}_{SP} \leftrightarrow_{\mathcal{M}} \mathcal{M}_{PQ}) \times S).$
We give sufficient conditions under which the minimum size $|P'|$ can be achieved.
These sufficient conditions are satisfied by all bipartition complete matroids and therefore also by free products and principal sums of matroids.
We give a simple construction for the minimal decomposition of a matroid $\mathcal{M}_{SQ}$ when the sufficient conditions are satisfied.

The outline of the paper follows.

Section~\ref{sec:Preliminaries} fixes the notation used in the paper for vector spaces and matroids.

Section~\ref{subsec:complete} provides a brief description of bipartition complete matroids with examples.

Section~\ref{subsec:linking} details the linking operation for matroids. The general minimization problem for $\mathcal{M}_{SP} \leftrightarrow_{\mathcal{M}} \mathcal{M}_{PQ}$ is discussed using only contraction and restriction of the interacting matroids. Strongly compatible matroids are defined. Given strongly compatible matroids $\mathcal{M}_{SP}$ and $\mathcal{M}_{PQ},$ an algorithm for the solution to the minimization problem is constructed.

Section~\ref{subsec:bipartitioncompletion} gives an algebraic characterization of bipartition complete matroids. An algorithm for minimally decomposing a bipartition complete matroid $\mathcal{M}_{SQ}$ into $\mathcal{M}_{SP} \leftrightarrow_{\mathcal{M}} \mathcal{M}_{PQ}$ is also provided.

Section~\ref{sec:free} explores the free product and principal sums of matroids. These are studied as examples of bipartition complete matroids and evaluated in terms of their minimal decompositions.

The appendix contains a list of basic matroid results and proofs of some of the results of the main paper.

\section{Preliminaries}
\label{sec:Preliminaries}
The preliminary results and the notation used are from \cite{HNarayanan1997}.

The size of a set $X$ is denoted by $\boldsymbol{|X|}.$
We write $\boldsymbol{X\uplus Y}$ in place of $X\cup Y$ when $X$, $Y$ are disjoint,
 $X-Y$ in place of $X \setminus Y.$

We denote the vertex set and edge set of a graph $\mathcal{G}$
 by $V(\mathcal{G}), E(\mathcal{G}),$ respectively.
 We assume the reader is familiar with terms such as circuits, bonds (minimal  set of edges which when deleted increases the number of connected pieces of the graph), trees etc. 
In the case of a disconnected graph, a spanning forest is a union of spanning trees of the connected pieces of the graph.
  The {\bf rank} of a graph ${\cal G},$ denoted $r({\cal G}),$ is the size of a spanning forest of ${\cal G}.$ 

We define \textbf{copies of graphs} as follows.
 A graph ${\cal G}'$ is a copy of the graph ${\cal G}$ iff there are bijections $V({\cal G})\rightarrow V({\cal G}'), $ $E({\cal G})\rightarrow E({\cal G}'), $ with vertex $v$  mapping to vertex $v'$ and edge $e$  mapping to edge $e'$ such that if $e$ lies between $v_1,v_2$ in ${\cal G},$ then $e'$ lies between $v'_1,v'_2$ in ${\cal G}'.$

\subsection{Matroids}
A \textbf{matroid} $\mathcal{M}_S$ on a finite set $S$ is an ordered pair $(S, \mathcal{B})$, where $\mathcal{B}$ is a non-empty collection of subsets of $S$, called \textbf{bases}, satisfying the following base axiom:
\begin{itemize}
    \item If $b_1, b_2 \in \mathcal{B}$ and $e_2 \in b_2 - b_1$, then there exists $e_1 \in b_1 - b_2$ such that $(b_1 - \{e_1\}) \cup \{e_2\} \in \mathcal{B}$.
\end{itemize}

It follows from the base axiom that no base can contain another and that all bases of a matroid have the same size. Subsets of bases of a matroid are called \textbf{independent sets} of the matroid, and supersets of bases are called \textbf{spanning sets} of the matroid. Maximal independent sets contained in a given subset can be shown to have the same size.

The \textbf{rank function} $r: 2^S \rightarrow \mathbb{Z}_{\geq 0}$, corresponding to a matroid $\mathcal{M}_S$, is defined to be the size of a maximal independent set contained in the given subset. The rank of the matroid, denoted $r(\mathcal{M}_S)$, is defined to be $r(S)$. The complement of a base is called a \textbf{cobase}.

A matroid on $S$ with only the empty set as a base is called the \textbf{zero matroid}, denoted by $\mathbf{0}_S$. A matroid on $S$ with only the full set $S$ as a base is called the \textbf{full matroid}, denoted by $\mathcal{F}_S$. (Note that we use $\mathbf{0}_S$ to denote the zero vector space as well as the zero matroid, and $\mathcal{F}_S$ to denote the full vector space as well as the full matroid. This abuse of notation will not cause confusion as context clarifies the intended entity.) The \textbf{uniform matroid}, denoted $U_{x,y},$ is a matroid on an underlying set with $y$ elements, with every subset  of the underlying set of $x$ elements, being a base.  

We denote a matroid on $S \uplus P$ by $\mathcal{M}_{SP}$ or by $\mathcal{M}_{PS}$. Given a matroid $\mathcal{M}_S$ on $S$ and a matroid $\mathcal{M}_P$ on $P$, with $S$ and $P$ disjoint, the \textbf{direct sum} of $\mathcal{M}_S$ and $\mathcal{M}_P$ is a matroid on $S \uplus P$, denoted by $\mathcal{M}_S \oplus \mathcal{M}_P$, whose bases are unions of a base of $\mathcal{M}_S$ with a base of $\mathcal{M}_P$.

\subsection{Dual Matroids and Copies}
\label{subsec:copies}

Given a matroid $\mathcal{M} := (S, \mathcal{B})$, its \textbf{dual} matroid, denoted by $\mathcal{M}^*$, is defined to be $(S, \mathcal{B}^*)$, where $\mathcal{B}^*$ is the collection of complements of the subsets present in $\mathcal{B}$. We can see that the dual of the dual is equal to the original matroid ($\mathcal{M}^{**} = \mathcal{M}$).

We define \textbf{copies of matroids} as follows:
Let $S, S'$ be disjoint copies of each other with $e' \in S'$ corresponding to $e \in S$. By $(\mathcal{M}_S)_{S'}$, we mean the matroid $\mathcal{M}_{S'}$ on $S'$ where $X' \subseteq S'$ is independent iff  its copy $X \subseteq S$ is independent in $\mathcal{M}_S$.

Let $S, S'$ and $P, P'$ be copies with the sets $S, S', P, P'$ being pairwise disjoint. By $(\mathcal{M}_{SP})_{S'P'}$, we mean the matroid on $S' \uplus P'$, where $X' \uplus Y'$ ($X' \subseteq S', Y' \subseteq P'$) is independent iff  its copy $X \uplus Y$ is independent in $\mathcal{M}_{SP}$. In particular, $(\mathcal{M}_{SS'})_{S'S}$ is the matroid where $Y \uplus X'$ ($Y \subseteq S, X' \subseteq S'$) is independent iff  its copy $X \uplus Y'$ is independent in $\mathcal{M}_{SS'}$.

We note that:
\[
[(\mathcal{M}_{SP})_{S'P'}]^* = (\mathcal{M}^*_{SP})_{S'P'}, \quad \text{and} \quad [(\mathcal{M}_{SS'})_{S'S}]^* = (\mathcal{M}^*_{SS'})_{S'S}
\]
\begin{remark}
Note that both $\mathcal{M}_{SS'}$ and $\mathcal{M}_{S'S}$ denote the 
 same matroid. However in $(\mathcal{M}_{SS'})_{S'S},$ the order of the index sets play a role. We start with a matroid of $S\uplus S'$ which could be 
 represented equivalently by $\mathcal{M}_{SS'}$ or by $\mathcal{M}_{S'S}$ and then flip $S,S'.$ The result could be represented equivalently by  $(\mathcal{M}_{S'S})_{SS'}$ or by $(\mathcal{M}_{SS'})_{S'S}.$ 
\end{remark}
\subsection{Circuits, Restrictions, Contractions, and Minors}

A \textbf{circuit} of a matroid $\mathcal{M}$ is a minimal dependent (not independent) set in $\mathcal{M}$. Circuits of $\mathcal{M}^*$ are called \textbf{bonds} of $\mathcal{M}$.

For a matroid $\mathcal{M}_S$, we define the \textbf{restriction} of $\mathcal{M}_S$ to $T \subseteq S$, denoted by $\mathcal{M}_S \circ T$, as a matroid whose independent sets are all subsets of $T$ which are independent in $\mathcal{M}_S$ (equivalently, bases in $\mathcal{M}_S \circ T$ are maximal intersections of bases of $\mathcal{M}_S$ with $T$). 

The \textbf{contraction} of $\mathcal{M}_S$ to $T$, denoted by $\mathcal{M}_S \times T$, is defined to be the matroid whose independent sets are precisely those $X \subseteq T$ which satisfy the property that $X \cup b_{S-T}$ is independent in $\mathcal{M}_S$ whenever $b_{S-T}$ is a base of $\mathcal{M}_S \circ (S-T)$ (equivalently, bases of $\mathcal{M}_S \times T$ are minimal intersections of bases of $\mathcal{M}_S$ with $T$).

(We use $\mathcal{M}_S \circ T$ and $\mathcal{M}_S \times T$ in place of the usual $\mathcal{M}_S \setminus (S-T)$ and $\mathcal{M}_S / (S-T)$ for better readability of an expression like $\mathcal{M}_S \setminus A / B$, which would now read $\mathcal{M}_S \circ (S-A) \times (S-(A\cup B))$.)

It is immediate from the definitions of restriction, contraction, and rank that:
\[
r(\mathcal{M}_S) = r(\mathcal{M}_S \times T) + r(\mathcal{M}_S \circ (S-T))
\]

A \textbf{minor} of $\mathcal{M}_S$ is a matroid of the form $(\mathcal{M}_S \times T_1) \circ T_2$ or $(\mathcal{M}_S \circ T_1) \times T_2$, where $T_2 \subseteq T_1 \subseteq S$. We usually omit the brackets when we speak of minors of matroids (e.g., $\mathcal{M}_S \times T_1 \circ T_2$ in place of $(\mathcal{M}_S \times T_1) \circ T_2$).

A basic result about minors is that when some elements are deleted and some others contracted, what matters is only whether an element is deleted or contracted and not the order in which these operations are carried out. Therefore:
\[
\mathcal{M}_S \times T_1 \circ T_2 = \mathcal{M}_S \circ (S - (T_1 - T_2)) \times T_2, \quad \text{where } T_2 \subseteq T_1 \subseteq S
\]

It is clear from the definitions of restriction and contraction of matroids that:
\[
(\mathcal{M}_S \times T_1 \circ T_2)^* = \mathcal{M}^*_S \circ T_1 \times T_2
\]

\subsection{Vector Space, Graphic Representations, and Matroid Operations}

We associate matroids with vector spaces and graphs as follows:

Let the vector space $\mathcal{V}_S$ be the row space generated by linear combinations of the rows of a matrix $M_S$ over a field $\mathbb{F}$, with $S$ as its set of columns. A column base of $\mathcal{V}_S$ is a maximal linearly independent set of columns of $M_S$, and $\mathcal{M}(\mathcal{V}_S)$ is defined to be the matroid whose bases are the column bases of $\mathcal{V}_S$. Such a matroid is said to be \textbf{representable over} $\mathbb{F}$.

It is clear that $(\mathcal{M}(\mathcal{V}_S))^* = \mathcal{M}(\mathcal{V}_S^\perp)$, since $(I \mid K)$ is a representative matrix for $\mathcal{V}_S$ iff  $(-K^T \mid I)$ is a representative matrix for $\mathcal{V}_S^\perp$. 

If $\mathcal{G}_S$ is a graph with edge set $S$, $\mathcal{M}(\mathcal{G}_S)$ is defined to be the matroid whose bases are spanning forests (maximal circuit-free sets of edges) of $\mathcal{G}_S$. Such a matroid is said to be \textbf{graphic}.

Let $\mathcal{M}_S^1$ and $\mathcal{M}_S^2$ be matroids on $S$. The \textbf{union} of these matroids, denoted by $\mathcal{M}_S^1 \vee \mathcal{M}_S^2$, is defined to be $(S, \mathcal{B}_{\vee})$, where $\mathcal{B}_{\vee}$ is the collection of maximal sets of the form $b^1 \cup b^2$, where $b^1$ is a base of $\mathcal{M}_S^1$ and $b^2$ is a base of $\mathcal{M}_S^2$. It can be shown that $\mathcal{M}_S^1 \vee \mathcal{M}_S^2$ is again a matroid.

The \textbf{intersection} of matroids $\mathcal{M}_S^1$ and $\mathcal{M}_S^2$, denoted by $\mathcal{M}_S^1 \wedge \mathcal{M}_S^2$, is defined to be the matroid $(S, \mathcal{B}_{\wedge})$ whose bases are minimal sets of the form $b^1 \cap b^2$, where $b^1$ is a base of $\mathcal{M}_S^1$ and $b^2$ is a base of $\mathcal{M}_S^2$. Matroid union is related to intersection through dualization:
\[
(\mathcal{M}_S^1 \vee \mathcal{M}_S^2)^* = (\mathcal{M}_S^1)^* \wedge (\mathcal{M}_S^2)^*.
\]

It is immediate from the definitions of matroid union and intersection that for $T \subseteq S$:
\[
(\mathcal{M}_S^1 \vee \mathcal{M}_S^2) \circ T = (\mathcal{M}_S^1 \circ T) \vee (\mathcal{M}_S^2 \circ T)
\]
\[
(\mathcal{M}_S^1 \wedge \mathcal{M}_S^2) \times T = (\mathcal{M}_S^1 \times T) \wedge (\mathcal{M}_S^2 \times T)
\]

Let $S, P, Q$ be pairwise disjoint sets. The union and intersection operations on $\mathcal{M}_{SP}$ and $\mathcal{M}_{PQ}$ are defined respectively by:
\[
\mathcal{M}_{SP} \vee \mathcal{M}_{PQ} := (\mathcal{M}_{SP} \oplus \mathbf{0}_Q) \vee (\mathcal{M}_{PQ} \oplus \mathbf{0}_S)
\]
\[
\mathcal{M}_{SP} \wedge \mathcal{M}_{PQ} := (\mathcal{M}_{SP} \oplus \mathcal{F}_Q) \wedge (\mathcal{M}_{PQ} \oplus \mathcal{F}_S).
\]

Bases $b_{SP}, b_{PQ}$ of $\mathcal{M}_{SP}, \mathcal{M}_{PQ}$, respectively, such that $b_{SP} \cup b_{PQ}$ is a base of $\mathcal{M}_{SP} \vee \mathcal{M}_{PQ}$, are said to be \textbf{maximally distant}.

For the convenience of the reader, we have listed basic results from matroid theory, which are used in  the present paper, in \ref{subsec:matroidresults}.

\section{Bipartition complete matroids}
\label{subsec:complete}

\begin{figure}[htbp]
\begin{center}
 \includegraphics[width=2.5in]{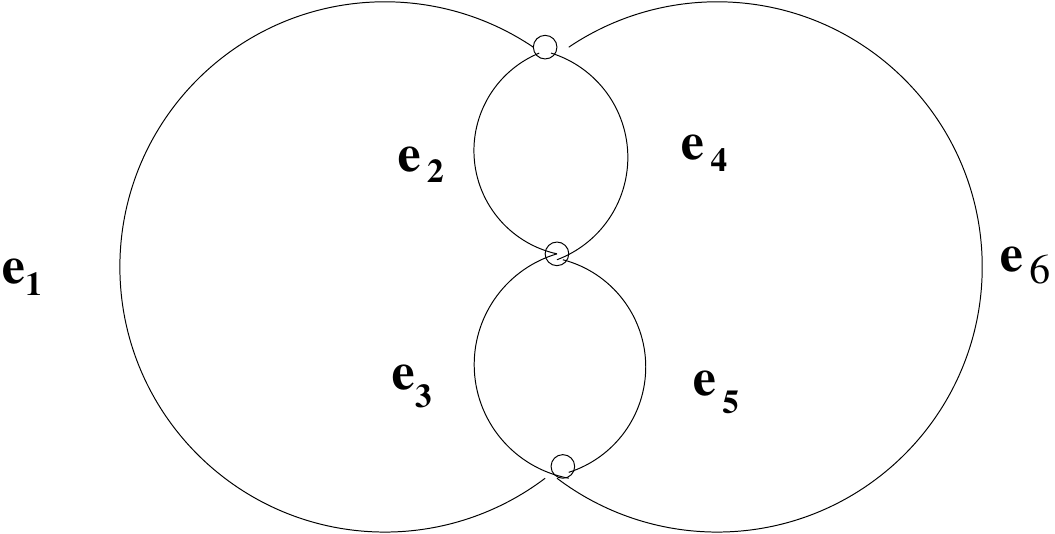}
\caption{Illustration of bipartition completeness}
\label{fig:matroid1}
\end{center}
\end{figure}

In this section, we define bipartition complete matroids.
The idea behind $\{S,Q\}$-completeness is mentioned in passing
as useful in \cite{STHN2014} but not studied further.
Such matroids deserve some attention in their own right,
and instances have occurred in the literature \cite{crapofree},\cite{boninsemidirect}.

\begin{definition}
\label{def:spclosure}
Let $\mathcal{M}_{SQ}$ be a matroid on the ground set $S \uplus Q$.
Let $B_{SQ}$ be the collection of bases of $\mathcal{M}_{SQ}$, and
let $\hat{B}_{SQ}$ be the collection of sets $\hat{b}_{SQ} = \hat{b}_S \uplus \hat{b}_Q$ satisfying the following property:
\begin{quote}
$\hat{b}_{SQ} \in \hat{B}_{SQ}$ iff  there exist bases $b_S \uplus b_Q$, $\hat{b}_S \uplus b_Q$, and $b_S \uplus \hat{b}_Q$ of $\mathcal{M}_{SQ}$, but $\hat{b}_{SQ}$ is not a base of $\mathcal{M}_{SQ}$.
\end{quote}

We define the $\{S,Q\}$-augmentation of $\mathcal{M}_{SQ}$ to be $B_{SQ} \cup \hat{B}_{SQ}$.
If the $\{S,Q\}$-augmentation of $\mathcal{M}_{SQ}$ is exactly $B_{SQ}$, then we say $\mathcal{M}_{SQ}$ is $\{S,Q\}$-complete, or complete with respect to the bipartition $\{S,Q\}$.
\end{definition}
\noindent The following lemma is immediate from the definition of $\{S,Q\}$-completeness of $\mathcal{M}_{SQ}$.

\begin{lemma}
\label{lem:spclosure}
Let $\mathcal{M}_{SQ}$ be a matroid.
\begin{enumerate}
    \item $\mathcal{M}_{SQ}$ is $\{S,Q\}$-complete iff  the dual matroid $\mathcal{M}^*_{SQ}$ is $\{S,Q\}$-complete.
    
    \item Let $\mathcal{M}_{SQ}$ be $\{S,Q\}$-complete. The family of all independent sets of $\mathcal{M}_{SQ} \circ S$ (respectively, $\mathcal{M}_{SQ} \circ Q$) that contain bases of $\mathcal{M}_{SQ} \times S$ (respectively, $\mathcal{M}_{SQ} \times Q$) can be partitioned as $\mathcal{E}_S(\mathcal{M}_{SQ})$ (respectively, $\mathcal{E}_Q(\mathcal{M}_{SQ})$), where:
    \[
    I_S \sim I'_S \quad\Longleftrightarrow\quad \{I_Q : I_S \uplus I_Q \text{ is a base}\} = \{I_Q : I'_S \uplus I_Q \text{ is a base}\}
    \]
    and symmetrically:
    \[
    I_Q \sim I'_Q \quad\Longleftrightarrow\quad \{I_S : I_S \uplus I_Q \text{ is a base}\} = \{I_S : I_S \uplus I'_Q \text{ is a base}\}.
    \]
    Consequently, if $\mathbf{I}_S^j$ is one of the blocks of $\mathcal{E}_S(\mathcal{M}_{SQ})$ and $I_S \in \mathbf{I}_S^j$, then $I_S \uplus I_Q$ is a base of $\mathcal{M}_{SQ}$ iff  $I_Q \in \hat{\mathbf{I}}_Q^j$.
    
    \item The assignment map $\mathbf{I}_S^j \to \hat{\mathbf{I}}_Q^j$ is a bijection.
\end{enumerate}
\end{lemma}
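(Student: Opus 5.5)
The three parts will be handled in order. Each reduces to the defining implication of $\{S,Q\}$-completeness combined with standard facts about how bases of $\mathcal{M}_{SQ}$ meet $S$ and $Q$.

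\emph{Part 1.} Use complementation. The set $b_S \uplus b_Q$ is a base of $\mathcal{M}_{SQ}$ iff $(S-b_S)\uplus(Q-b_Q)$ is a base of $\mathcal{M}^*_{SQ}$. Suppose $\mathcal{M}^*_{SQ}$ contains the three bases $c^1_S\uplus c^1_Q$, $c^1_S\uplus c^2_Q$ and $c^2_S\uplus c^1_Q$. Put $b^i_S=S-c^i_S$ and $b^i_Q=Q-c^i_Q$. Then $b^1_S\uplus b^1_Q$, $b^1_S\uplus b^2_Q$ and $b^2_S\uplus b^1_Q$ are bases of $\mathcal{M}_{SQ}$. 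Completeness of $\mathcal{M}_{SQ}$ makes $b^2_S\uplus b^2_Q$ a base, so $c^2_S\uplus c^2_Q$ is a base of $\mathcal{M}^*_{SQ}$. The converse follows from $\mathcal{M}^{**}=\mathcal{M}$.

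\emph{Part 2.} First record that for any base $b_S\uplus b_Q$ of $\mathcal{M}_{SQ}$, the set $b_S$ is independent in $\mathcal{M}_{SQ}\circ S$ and contains a base of $\mathcal{M}_{SQ}\times S$. The second claim holds because $b_Q$ is independent in $\mathcal{M}_{SQ}\circ Q$; extending $b_Q$ to a base of $\mathcal{M}_{SQ}\circ Q$ and then using the characterization of contraction bases as minimal intersections shows $b_S\supseteq$ some base of $\mathcal{M}_{SQ}\times S$ (this uses the appendix facts about minors). Thus the family of relevant $I_S$ contains every $S$-part of a base. I will read the family as exactly these $S$-parts, so that each $I_S$ has a nonempty set $N(I_S):=\{I_Q: I_S\uplus I_Q \text{ a base}\}$. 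The relation $\sim$ is defined by equality of these sets, so it is trivially an equivalence relation and yields the partition $\mathcal{E}_S$; symmetrically we get $\mathcal{E}_Q$.

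The substantive point is that completeness makes the neighbourhood sets $N(I_S)$ either equal or disjoint. Suppose $I_Q^1\in N(I^1_S)\cap N(I^2_S)$ and $I_Q^2\in N(I^1_S)$. Then $I^1_S\uplus I^1_Q$, $I^1_S\uplus I^2_Q$ and $I^2_S\uplus I^1_Q$ are bases, so completeness gives $I_Q^2\in N(I^2_S)$. By symmetry $N(I^1_S)=N(I^2_S)$. Define $\hat{\mathbf{I}}^j_Q:=N(I_S)$ for any $I_S\in\mathbf{I}^j_S$; this is well defined. The ``consequently'' statement then holds by definition.

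\emph{Part 3.} Show $\hat{\mathbf{I}}^j_Q$ is a block of $\mathcal{E}_Q$ and that the map is bijective.
\begin{itemize}
\item \emph{Each $I_Q\in\hat{\mathbf{I}}^j_Q$ has $N(I_Q)=\mathbf{I}^j_S$.} If $I'_S\uplus I_Q$ is a base, then $N(I'_S)$ meets $N(I_S)$ in $I_Q$, so the two sets are equal by the dichotomy above, i.e. $I'_S\sim I_S$. Conversely, every member of $\mathbf{I}^j_S$ pairs with $I_Q$. Hence $\hat{\mathbf{I}}^j_Q$ lies inside a single $\sim$-class on the $Q$ side.
\item \emph{That class is exactly $\hat{\mathbf{I}}^j_Q$.} Any $I'_Q$ with $N(I'_Q)=\mathbf{I}^j_S$ lies in $N(I_S)$.
\item \emph{Injectivity.} Distinct blocks have disjoint, hence distinct, images.
\item \emph{Surjectivity.} Each $Q$-block arises from some base, so it is the image of the block containing that base's $S$-part.
\end{itemize}

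The main obstacle is the bookkeeping in part 2: matching the stated family (independent sets of $\mathcal{M}_{SQ}\circ S$ containing bases of $\mathcal{M}_{SQ}\times S$) with the $S$-parts of bases of $\mathcal{M}_{SQ}$. The argument uses only one direction of this correspondence and reads the family as these $S$-parts. If one insists on the literal family, then an $I_S$ with $N(I_S)=\emptyset$ would need to be excluded. I would do so by restricting to sets with nonempty neighbourhood, which is all that parts 2 and 3 require.
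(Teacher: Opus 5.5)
Your proof is correct and is essentially what the paper intends. The paper gives no proof and calls the lemma immediate from the definition. Part 1 is the complementation argument. Your ``equal or disjoint'' property of the sets $N(I_S)$ is exactly the paper's later relational remark: completeness means $R R^{-1} R \subseteq R$, i.e.\ every component of the bipartite base graph is complete bipartite, with $\mathcal{E}_S$ and $\mathcal{E}_Q$ as its two shores.

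One loose end: you do not need to reinterpret the family, because the converse inclusion you left open is a one-liner.
\begin{itemize}
\item Let $I_S$ be independent and contain a base $b'_S$ of $\mathcal{M}_{SQ}\times S$. Then there is a base $b_Q$ of $\mathcal{M}_{SQ}\circ Q$ with $b'_S\uplus b_Q$ a base of $\mathcal{M}_{SQ}$ (Theorem~\ref{thm:dotcrossidentitym}, item 8, with the roles of the two blocks swapped).
\item Hence $I_S\cup Q$ is spanning, and the independent set $I_S$ extends to a base $I_S\uplus I_Q$ with $I_Q\subseteq Q$.
\end{itemize}
So no member of the stated family has $N(I_S)=\emptyset$. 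The family coincides with the set of $S$-parts of bases, and symmetrically on the $Q$ side. Your Part 3 steps then go through for the statement exactly as written. In particular, ``disjoint, hence distinct'' needs nonempty images, and surjectivity needs every member of the $Q$-family to be the $Q$-part of a base; both now hold.
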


\begin{example}
Consider the matroid $\mathcal{M}_E$ associated with the graph $\mathcal{G}_E$ shown in Figure~\ref{fig:matroid1}, where $E \equiv \{e_1, e_2, e_3, e_4, e_5, e_6\}$. We partition the ground set $E$ in two different ways into $\{S,Q\}$ and $\{A,B\}$ as follows:
\begin{align*}
    S &\equiv \{e_1, e_2, e_4, e_6\}, \quad Q \equiv \{e_3, e_5\}, \\
    A &\equiv \{e_1, e_2, e_3\}, \qquad B \equiv \{e_4, e_5, e_6\}.
\end{align*}
We will show that $\mathcal{M}_E$ is $\{S,Q\}$-complete, but not $\{A,B\}$-complete. 

The bases of $\mathcal{M}_E$ can be explicitly listed as $b^i_S \uplus b^i_Q$:
\begin{align*}
    &\{e_1, e_2\} \uplus \emptyset, \quad \{e_1, e_4\} \uplus \emptyset, \quad \{e_2, e_6\} \uplus \emptyset, \quad \{e_4, e_6\} \uplus \emptyset, \\
    &\{e_1\} \uplus \{e_3\}, \quad \{e_1\} \uplus \{e_5\}, \quad \{e_2\} \uplus \{e_3\}, \quad \{e_2\} \uplus \{e_5\}, \\
    &\{e_4\} \uplus \{e_3\}, \quad \{e_4\} \uplus \{e_5\}, \quad \{e_6\} \uplus \{e_3\}, \quad \{e_6\} \uplus \{e_5\}.
\end{align*}

The resulting equivalence relation blocks for $\mathcal{E}_S(\mathcal{M}_{SQ})$ and $\mathcal{E}_Q(\mathcal{M}_{SQ})$ are:
\begin{align*}
    \mathcal{E}_S(\mathcal{M}_{SQ}) &\equiv \Bigl\{ \bigl\{\{e_1, e_2\}, \{e_1, e_4\}, \{e_2, e_6\}, \{e_4, e_6\}\bigr\}, \, \bigl\{\{e_1\}, \{e_2\}, \{e_4\}, \{e_6\}\bigr\} \Bigr\}, \\
    \mathcal{E}_Q(\mathcal{M}_{SQ}) &\equiv \Bigl\{ \bigl\{\emptyset\bigr\}, \, \bigl\{\{e_3\}, \{e_5\}\bigr\} \Bigr\}.
\end{align*}

To capture $\{S,Q\}$-completeness we may define a bipartite base graph as follows:
Top side vertices are independent sets $b_S$
containing bases of $\mathcal{M}_{SQ} \times S$ and contained in bases of $\mathcal{M}_{SQ} \circ S$.
Similarly
bottom side vertices are independent sets $b_Q$
containing bases of $\mathcal{M}_{SQ} \times Q$ and contained in bases of $\mathcal{M}_{SQ} \circ Q$.
When the matroid $\mathcal{M}_{SQ}$ is bipartition complete, this bipartite graph will appear as a union of disconnected complete bipartite subgraphs (see Figure~\ref{fig:bipartite1}).

Now consider the sets $b^1_A \uplus b^3_B$, $b^1_A \uplus b^4_B$, $b^2_A \uplus b^3_B$, where $b^1_A \equiv \{e_1\}$, $b^2_A \equiv \{e_2\}$, $b^3_B \equiv \{e_5\}$, $b^4_B \equiv \{e_4\}$.
These are all bases of $\mathcal{M}_E$. But $b^2_A \uplus b^4_B = \{e_2,e_4\}$ is not a base of $\mathcal{M}_E$.
\end{example}

A relation theoretic description of bipartition completeness is as follows:  define a bipartite relation between admissible $S$-subsets (containing a base of ${\cal M}_{SQ} \times S$ and contained  in a base of ${\cal M}_{SQ} \circ S$) and admissible $Q$-subsets by
\[  
 I_S\,R\,I_Q :\equiv 
 I_S\uplus I_Q\text{ is a base of }\mbox{$\cal M$}_{SQ}.
\]
The completeness property is exactly the  law
\[
 R\,R^{-1}\,R\subseteq R.
\]
Equivalently, every connected component of the bipartite base graph is complete bipartite.  The equivalence relations $\mbox{$\cal E$}_S$ and $\mbox{$\cal E$}_Q$ are then simply the two shores of these complete bipartite components.

\begin{figure}[htbp] 
    \centering
    \begin{tikzpicture}[
        vertex/.style={circle, draw, minimum size=9mm, inner sep=0.5pt},
        every node/.style={font=\footnotesize}
    ]
        % --- TOP LAYER VERTICES ---
        \node[vertex] (e1) at (0, 0) {$e_1$};
        \node[vertex] (e2) [right=0.45cm of e1] {$e_2$};
        \node[vertex] (e4) [right=0.45cm of e2] {$e_4$};
        \node[vertex] (e6) [right=0.45cm of e4] {$e_6$};
        \node[vertex] (p12) [right=0.45cm of e6] {$\{e_1,e_2\}$};
        \node[vertex] (p14) [right=0.45cm of p12] {$\{e_1,e_4\}$};
        \node[vertex] (p26) [right=0.45cm of p14] {$\{e_2,e_6\}$};
        \node[vertex] (p46) [right=0.45cm of p26] {$\{e_4,e_6\}$};

        % --- BOTTOM LAYER VERTICES ---
        \node[vertex] (r_e3) at (1.2, -3.5) {$e_3$};
        \node[vertex] (r_e5) at (3.8, -3.5) {$e_5$};
        \node[vertex] (r_empty) at (7.8, -3.5) {$\emptyset$};

        % --- EDGES (Top Single Elements to Bottom e3 and e5) ---
        \path[-] (e1) edge (r_e3) edge (r_e5);
        \path[-] (e2) edge (r_e3) edge (r_e5);
        \path[-] (e4) edge (r_e3) edge (r_e5);
        \path[-] (e6) edge (r_e3) edge (r_e5);

        % --- EDGES (Top Pairs to Bottom Empty Set) ---
        \path[-] (p12) edge (r_empty);
        \path[-] (p14) edge (r_empty);
        \path[-] (p26) edge (r_empty);
        \path[-] (p46) edge (r_empty);

    \end{tikzpicture}
    \caption{Bipartite Graph  illustrating $\{S,Q\}-$completeness}
\label{fig:bipartite1}
\end{figure}
\subsection{Examples of matroids $\mathcal{M}_{SQ}$ which are $\{S,Q\}$-complete}
\label{subsubsec:eg}
We list below, classes of matroids which contain $\{S,Q\}$-complete
matroids.
The first two of the examples would be present in any nontrivial class such as graphic,
representable over a specific field, etc.
\begin{enumerate}
\item The trivial situation where $\mathcal{M}_{SQ} \equiv \mathcal{M}_S \oplus \mathcal{M}_Q$. 
\item The series and parallel combination of graphs $ \mathcal{G}_{SP} , \mathcal{G}_{PQ}$ along an edge $P$ to a graph $\mathcal{G}_{SQ}$ are well known. 
 We can generalize 
this to matroids such that $\mathcal{M}(\mathcal{G}_{SQ}) $ is the series or parallel combination respectively of the matroids $\mathcal{M}(\mathcal{G}_{SP}), \mathcal{M}(\mathcal{G}_{PQ}). $ The matroid $\mathcal{M}(\mathcal{G}_{SQ}) $ 
 would turn out to be $\{S,Q\}$- complete.
\item The free product $\mathcal{M}_S \Box \mathcal{M}_Q$, defined in \cite{crapofree} is $\{S,Q\}$-complete. Here the bases of $\mathcal{M}_{SQ}$ are unions of spanning sets of $\mathcal{M}_Q$ and independent sets of $\mathcal{M}_S$ which together have size $r(\mathcal{M}_S)+r(\mathcal{M}_Q)$. This is discussed in Section \ref{sec:free}.
\item The principal sum $(\mathcal{M}_S,\mathcal{M}_Q;A,B)$, $A\subseteq S$, $B\subseteq Q$, of matroids $\mathcal{M}_S,\mathcal{M}_Q$, defined 
 in \cite{boninsemidirect} is $\{S,Q\}$-complete.
 This is discussed in Section \ref{sec:free}.
\end{enumerate}
The following lemma shows that for a $\{S,Q\}$-complete matroid $\mathcal{M}_{SQ}$, there exist natural partial orders on $\mathcal{E}_S(\mathcal{M}_{SQ})$, $\mathcal{E}_Q(\mathcal{M}_{SQ})$, 
 and there is a correspondence between them.
\begin{lemma}
\label{lem:partialorder}
Let matroid $\mathcal{M}_{SQ}$ be $\{S,Q\}$-complete. 
\begin{enumerate}
\item Let $\mathbf{I}_S^j,\mathbf{I}_S^k$ be blocks of 
$\mathcal{E}_S(\mathcal{M}_{SQ})$. Let $b_X\subseteq b_Y$, for some $b_X\in \mathbf{I}_S^j, b_Y\in \mathbf{I}_S^k$.
 Then we have the following: whenever $b_{X'}\in \mathbf{I}_S^j$, there exists $b_{Y'}\supseteq b_{X'}$, $b_{Y'}\in \mathbf{I}_S^k$, and whenever $b_{Y'}\in \mathbf{I}_S^k$, there exists $b_{X'}\subseteq b_{Y'}$, $b_{X'}\in \mathbf{I}_S^j$. 
\item We have a partial order on $\mathcal{E}_S(\mathcal{M}_{SQ})$ defined by $\mathbf{I}_S^j\leq \mathbf{I}_S^k$ iff $b_X\subseteq b_Y$,
for some $b_X\in \mathbf{I}_S^j, b_Y\in \mathbf{I}_S^k$.
\item Let $\mathbf{I}_S^j\rightarrow \hat{\mathbf{I}}_Q^j$ be the bijection
 defined in Lemma \ref{lem:spclosure}. We then have a partial order on $\mathcal{E}_Q(\mathcal{M}_{SQ})$ defined by $\hat{\mathbf{I}}_Q^j\leq \hat{\mathbf{I}}_Q^k$
iff $\mathbf{I}_S^j\geq \mathbf{I}_S^k$.
\end{enumerate}
\end{lemma}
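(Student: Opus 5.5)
Throughout, write $\mathbf{I}_S^j$ for a block of $\mathcal{E}_S(\mathcal{M}_{SQ})$ and $\hat{\mathbf{I}}_Q^j$ for its partner block from Lemma \ref{lem:spclosure}. By that lemma, $I_S\uplus I_Q$ is a base iff $I_S\in\mathbf{I}_S^j$ and $I_Q\in\hat{\mathbf{I}}_Q^j$ for the same $j$. All of part 1 will come from the base exchange axiom applied across the bipartition, combined with this block structure.

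\textbf{Part 1.} Suppose $b_X\subseteq b_Y$ with $b_X\in\mathbf{I}_S^j$ and $b_Y\in\mathbf{I}_S^k$. Fix $c_Q\in\hat{\mathbf{I}}_Q^k$ and $d_Q\in\hat{\mathbf{I}}_Q^j$. Then $b_Y\uplus c_Q$ and $b_X\uplus d_Q$ are bases, and since they have equal size, $|d_Q|-|c_Q|=|b_Y-b_X|$.

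Now let $b_{X'}\in\mathbf{I}_S^j$, so $b_{X'}\uplus d_Q$ is a base. Consider the bases $B_1=b_{X'}\uplus d_Q$ and $B_2=b_Y\uplus c_Q$.
\begin{enumerate}
\item Repeatedly bring an element of $b_Y-b_X$ into $B_1$. By the exchange axiom, some element of $B_1-B_2$ leaves each time.
\item The aim is to arrange that every element leaving lies in $d_Q$. After $|b_Y-b_X|$ steps, the $S$-part is then $b_{X'}\cup(b_Y-b_X)\supseteq b_{X'}$, paired with a $Q$-part $c'_Q\subseteq d_Q$.
\item Such a resulting $S$-part lies in some block $\mathbf{I}_S^m$. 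It remains to show $m=k$, i.e.\ $c'_Q\in\hat{\mathbf{I}}_Q^k$.
\end{enumerate}

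Steps 2 and 3 are the main obstacle, because a naive exchange may instead remove an element of $b_{X'}$. I would first try a cleaner route. Form $b_{X'}\uplus(\text{the $Q$-part})$ and use that $b_X\uplus d_Q$ and $b_{X'}\uplus d_Q$ have the same $Q$-partners. Transfer the augmentation by contracting: the $Q$-parts pairing with supersets of $b_X$ inside $b_Y$ are governed by $\mathcal{M}_{SQ}\times(S\uplus Q - b_X)$, restricted appropriately.

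More concretely, consider the matroid $N=\mathcal{M}_{SQ}\times((S-b_X)\uplus Q)$ and its analogue with $b_{X'}$. I will show these have the same bases whose $Q$-part lies in $\hat{\mathbf{I}}_Q^j$-type sets. Then $(b_Y-b_X)\uplus c_Q$ is a base of the first, and completeness lets me swap in $b_{X'}$.

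If this contraction argument stalls, I fall back to the symmetric-exchange formulation. Start from bases $b_X\uplus d_Q$, $b_{X'}\uplus d_Q$ and $b_Y\uplus c_Q$. Use the completeness law $RR^{-1}R\subseteq R$ after each single-element exchange, so that the block membership of the $S$-part is preserved.

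The second half of part 1 is the dual statement. Pass to $\mathcal{M}^*_{SQ}$, which is $\{S,Q\}$-complete by Lemma \ref{lem:spclosure}(1). Complements within $S$ reverse inclusion and send blocks to blocks, so the downward statement becomes the upward statement just proved.

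\textbf{Part 2.} Part 1 shows the relation $\leq$ is independent of representatives. Reflexivity is trivial. For transitivity, suppose $\mathbf{I}^j\le\mathbf{I}^k\le\mathbf{I}^l$. Take $b_X\subseteq b_Y$, and apply part 1 to $b_Y$ and the pair $(k,l)$ to get $b_Z\supseteq b_Y$ in $\mathbf{I}^l$; then $b_X\subseteq b_Z$.

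For antisymmetry, suppose $b_X\subseteq b_Y$ and $b_{Y'}\subseteq b_{X'}$ across the two blocks. Then part 1 gives chains $b_X\subseteq b_Y\subseteq b_{X''}$ with $b_X,b_{X''}\in\mathbf{I}^j$. Members of one block all have the same size, since each pairs with the same $Q$-sets to form equal-size bases. Hence every inclusion in the chain is an equality, so $b_X=b_Y$ and $j=k$.

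\textbf{Part 3.} The map $\mathbf{I}_S^j\mapsto\hat{\mathbf{I}}_Q^j$ is a bijection by Lemma \ref{lem:spclosure}(3). Transporting the reverse of the order from part 2 along this bijection therefore gives a partial order on $\mathcal{E}_Q(\mathcal{M}_{SQ})$.
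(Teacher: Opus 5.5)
Your Parts 2 and 3 are fine. So is your reduction of the downward half of Part 1 to the upward half by passing to $\mathcal{M}^*_{SQ}$: complementation in $S$ reverses inclusion and carries blocks of $\mathcal{E}_S(\mathcal{M}_{SQ})$ to blocks of $\mathcal{E}_S(\mathcal{M}^*_{SQ})$.

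The gap is the upward half of Part 1, on which everything else rests. You sketch three strategies and carry none of them out, and the main one aims at an endpoint that need not exist. Take the paper's own example ($S=\{e_1,e_2,e_4,e_6\}$, $Q=\{e_3,e_5\}$) with $b_X=\{e_1\}$, $b_Y=\{e_1,e_2\}$, $b_{X'}=\{e_4\}$, $d_Q=\{e_3\}$, $c_Q=\emptyset$. Then $b_{X'}\cup(b_Y-b_X)=\{e_2,e_4\}$ is not a base, since $e_2,e_4$ are parallel. Bringing $e_2$ into $B_1=\{e_4,e_3\}$ forces $e_4$ out. In fact neither of the valid answers, $\{e_1,e_4\}$ and $\{e_4,e_6\}$, meets $b_Y-b_X$ at all.

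Moreover, you chose $c_Q\in\hat{\mathbf{I}}^k_Q$ and $d_Q\in\hat{\mathbf{I}}^j_Q$ independently of each other. So nothing in your setup forces the final $S$-part into block $k$, and your Step 3 ($m=k$) is left open. The contraction idea compares matroids on different ground sets, and neither it nor the ``symmetric-exchange fallback'' is specified enough to close this.

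The missing idea is to couple the two $Q$-partners by inclusion; after that no exchange-by-exchange bookkeeping is needed.
\begin{itemize}
\item Let $b_X\uplus b^1_Q$ be a base. Since $b_Y\uplus b^1_Q\supseteq b_X\uplus b^1_Q$ is spanning and $b_Y$ is independent, grow $b_Y$ inside it to a base $b_Y\uplus b^2_Q$. Thus $b^2_Q\subseteq b^1_Q$ and $b^2_Q\in\hat{\mathbf{I}}^k_Q$.
\item For $b_{X'}\in\mathbf{I}^j_S$, $b_{X'}\uplus b^1_Q$ is a base, so $b_{X'}\uplus b^2_Q$ is independent.
\item Extend $b_{X'}\uplus b^2_Q$ to a base using elements of the base $b_Y\uplus b^2_Q$, i.e.\ elements of $b_Y$. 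This gives a base $b_{Y'}\uplus b^2_Q$ with $b_{Y'}\supseteq b_{X'}$.
\item Now $b_{Y'}\in\mathbf{I}^k_S$ automatically: $b_{Y'}$ and $b_Y$ share the partner $b^2_Q$, and completeness makes them equivalent.
\end{itemize}
This is the paper's proof. The paper handles the downward half directly in the same way, by growing $b^1_Q$ to a base inside the spanning set $b_{Y'}\uplus b^1_Q$. Your duality reduction would also work once the upward half is established.
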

\begin{proof}
\begin{enumerate}
\item 
 For some $b^1_Q$, we have a base $b_{X}\uplus b^1_Q$ of $\mathcal{M}_{SQ}$.
 Since $b_{Y}\supseteq b_{X}$, there exists a base of $\mathcal{M}_{SQ}$ contained in $b_{Y}\uplus b^1_Q$ and containing $b_{Y}$.
 Thus $b_Y\uplus b^2_Q$ is a base of $\mathcal{M}_{SQ}$ for some $b^2_Q\subseteq b^1_Q$.

 Since $b_{X'}\in \mathbf{I}_S^j$, we must have that $b_{X'}\uplus b^1_Q$ is a base of
 $\mathcal{M}_{SQ}$. Since $b^2_Q\subseteq b^1_Q$, we have $b_{X'}\uplus b^2_Q$ as independent in $\mathcal{M}_{SQ}$. This can be grown to a base $b_{Y'}\uplus b^2_Q$ of $\mathcal{M}_{SQ}$ using additional elements from $b_Y$.
 It is clear that $b_{Y'}\supseteq b_{X'}$ and that $b_{Y'}\in \mathbf{I}_S^k$.

Next let $b_{Y'}\in \mathbf{I}_S^k$.
  The subset $b_{Y'}\uplus b^1_Q$ contains a base of $\mathcal{M}_{SQ}$ that contains $b^1_Q$. Starting from $b^1_Q$, we can grow a base $b_{X'}\uplus b^1_Q$ of $\mathcal{M}_{SQ}$ using only elements from $b_{Y'}$, so that $b_{X'}\subseteq b_{Y'}$ and that $b_{X'}\in \mathbf{I}_S^j$.

\end{enumerate}
Parts 2 and 3 are immediate from part 1.
\end{proof}

\section{Linking of matroids}
\label{subsec:linking}
There is a `linking' operation for matroids, 
 which has been studied 
in detail  
 in \cite{STHN2014}. (We note that an interesting special case of this operation has been studied in \cite{lemos}.) We will use this operation 
to study bipartition complete matroids and, in particular, free products and principal sums. In this section we give a brief description of this operation. Except for Subsection \ref{subsec:amalgamsplicing} and Theorem \ref{thm:matroidminP}, all the results in this section 
have been at least informally stated in \cite{STHN2014}, but their proofs only sketched. However, for the purposes of this paper we need formal statements and proofs which are presented here.
 
The \textit{linking} $\mathcal{M}_{SP}\leftrightarrow_{\mathcal M} \mathcal{M}_{PQ}$ of matroids $\mathcal{M}_{SP}, \mathcal{M}_{PQ}$, on sets $S\uplus P, P\uplus Q$, respectively with $S,Q$ disjoint, is defined as follows:
$\mathcal{M}_{SP}\leftrightarrow_{\mathcal M} \mathcal{M}_{PQ} \equiv (\mathcal{M}_{SP}\vee \mathcal{M}_{PQ})\times (S\uplus Q).$
 We note that $\mathcal{M}_{SP} \setminus P = \mathcal{M}_{SP} \circ S = \mathcal{M}_{SP}\leftrightarrow_{\mathcal M} \mathcal{F}_P$ and $\mathcal{M}_{SP} / P = \mathcal{M}_{SP} \times S = \mathcal{M}_{SP}\leftrightarrow_{\mathcal M} {\mathbf 0}_P.$

The linking operation for matroids 
is in general not associative but,
under an additional condition, it turns out to be so.
Consider for instance the expression
$\mathcal{M}_{A_1B_1}\leftrightarrow \mathcal{M}_{A_2B_2}\leftrightarrow \mathcal{M}_{A_1B_2},$
where all the $A_i,B_j$ are mutually disjoint. It is clear that this expression
has a unique meaning, namely,
$(\mathcal{M}_{A_1B_1}\vee \mathcal{M}_{A_2B_2} \vee \mathcal{M}_{A_1B_2})\times A_2\uplus B_1.$
Essentially,
index subsets survive if they occur only once and they get coupled through the `$\vee$' operation involving terms which occur twice. These latter do not survive in the final expression.
 We call this the \textit{restricted associativity} of linking and state it as a lemma \cite{STHN2014}.
\begin{lemma}
\label{lem:restassoc}
The expression
$(\leftrightarrow_{\mathcal M}) _{i,j,k,\dots}\mathcal{M}_{ A_i B_j C_k\dots}$, where the index sets $A_i, B_j, C_k, \dots$ are all mutually disjoint
and no index set occurs more than twice in it, has the unique meaning:
$(\leftrightarrow_{\mathcal M}) _{i,j,k,\dots}\mathcal{M}_{ A_i B_j C_k\dots}= (\vee _{i,j,k,\dots}\mathcal{M}_{ A_i B_j C_k\dots})\times T,$
 where $T$ is the union of all the index sets which occur only once.
\end{lemma}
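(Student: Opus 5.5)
The lemma asserts that a multi-term linking expression, in which each index set occurs at most twice, equals $(\vee \mathcal{M}_{A_iB_jC_k\dots})\times T$. It should follow by induction on the number of factors once one binary reassociation step is proved. Concretely, I would first reduce to the following \emph{key identity}. Let $\mathcal{M}_X,\mathcal{M}_Y$ be matroids on sets $X,Y$ and let $Z\subseteq X\cup Y$ be the elements occurring in both. Let $\mathcal{M}_W$ be a further matroid, and set $T'=(X\cup Y)-Z$. Then
\[
\bigl((\mathcal{M}_X\vee\mathcal{M}_Y)\times T'\bigr)\vee \mathcal{M}_W
=\bigl(\mathcal{M}_X\vee\mathcal{M}_Y\vee\mathcal{M}_W\bigr)\times (T'\cup W),
\]
provided $W\cap Z=\emptyset$. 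This proviso is exactly what the ``no index set occurs more than twice'' hypothesis guarantees: the sets already used up in $Z$ cannot reappear in later factors. Here union of matroids on different ground sets is understood, as in the paper, by padding with $\mathbf{0}$.

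Given the identity, the induction runs as follows. Any bracketing of the expression is a binary tree of $\leftrightarrow_{\mathcal M}$ operations. Apply the identity repeatedly from the innermost brackets outward, using $(\mathcal{N}\times A)\times B=\mathcal{N}\times B$ for $B\subseteq A$ to merge successive contractions. Every bracketing then collapses to $(\vee\,\mathcal{M}_{A_iB_jC_k\dots})\times T$. Since this form is independent of the bracketing, the meaning is unique. Commutativity and associativity of $\vee$ itself are standard (listed in the appendix).

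To prove the key identity, I would use the rank function of a union, $r_\vee(E)=\min_{F\subseteq E}\bigl(r_1(F)+r_2(F)+|E-F|\bigr)$. I would also use that the rank of $\mathcal{N}\times T'$ on $A\subseteq T'$ is $r_{\mathcal N}(A\cup Z)-r_{\mathcal N}(Z)$. A cleaner route is via bases. First choose a base $b_Z$ of $(\mathcal{M}_X\vee\mathcal{M}_Y\vee\mathcal{M}_W)\circ Z$. Since $W\cap Z=\emptyset$, $\mathcal{M}_W$ contributes nothing on $Z$, so this restriction equals $(\mathcal{M}_X\vee\mathcal{M}_Y)\circ Z$. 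Next, use the characterization of contraction bases: $B$ is a base of $\mathcal{N}\times(E-Z)$ iff $B\cup b_Z$ is a base of $\mathcal{N}$. The identity then becomes a statement about extending a fixed independent set $b_Z$, realised as $b^X\cup b^Y$ with $b^X,b^Y$ independent in the respective matroids, by elements of $\mathcal{M}_W$.

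The main obstacle is this last step. One needs that maximal independent sets of the threefold union containing $b_Z$ can be decomposed so that the $Z$-part comes only from $\mathcal{M}_X$ and $\mathcal{M}_Y$. One also needs that every such decomposition is realised from a base of the contracted union. I would attack it through the rank formula. Compute both sides' ranks on an arbitrary $A\subseteq T'\cup W$ and show that the minimizing $F$ may be taken to contain $Z$, or at least to split as $F\cap Z$ plus the rest. Then use submodularity to see that the $|E-F|$ term and the $r_W$ term interact with $Z$ only through $r_X+r_Y$, which is where $W\cap Z=\emptyset$ is used. If that computation becomes messy, I would fall back on the matroid-intersection-free argument of \cite{STHN2014}, proving the two base inclusions directly by exchange.
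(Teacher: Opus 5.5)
The paper does not prove this lemma. It states it, attributes it to \cite{STHN2014}, and then only uses it, so there is no in-paper argument to compare with. Your proposal supplies a proof the paper omits, and it is correct in outline. Its merit is that it is self-contained and pins down exactly where the hypothesis is used: whatever is contracted inside one part of the expression consists of loops for every other factor.

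\textbf{The key identity.} Your identity is true. The step you call the main obstacle closes in a few lines if you treat $\mathcal{N}:=\mathcal{M}_X\vee\mathcal{M}_Y$ as a single matroid; you never need to split $b_Z$ as $b^X\cup b^Y$.

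Take a base $b_Z$ of $\mathcal{N}\circ Z$. It is also a base of $(\mathcal{N}\vee\mathcal{M}_W)\circ Z$, because $Z$ is a set of loops of the padded $\mathcal{M}_W$.

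A set $B\subseteq T'\cup W$ is independent in $(\mathcal{N}\vee\mathcal{M}_W)\times(T'\cup W)$ iff $B\uplus b_Z=I\uplus J$ with $I$ independent in $\mathcal{N}$ and $J$ independent in $\mathcal{M}_W$. Since $J\cap Z=\emptyset$ and $B\cap Z=\emptyset$, you get $I\cap Z=b_Z$. Hence $I-b_Z$ is independent in $\mathcal{N}\times T'$, and $B=(I-b_Z)\uplus J$ is independent in $(\mathcal{N}\times T')\vee\mathcal{M}_W$.

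Conversely, if $I'$ is independent in $\mathcal{N}\times T'$, then $I'\uplus b_Z$ is independent in $\mathcal{N}$, which gives the reverse inclusion.

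The rank route also works, provided you use the two-term formula $\min_F\bigl(r_{\mathcal N}(F)+r_W(F)+|A\cup Z-F|\bigr)$. There, adding $z\in Z$ to $F$ never increases the value. Do not try the \emph{minimizer contains $Z$} argument with the three-term sum $r_X+r_Y+r_W$: adding $z$ can raise that sum by two, and the claim is false there.

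\textbf{The induction.} Two points need tidying.

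First, state the identity for an arbitrary matroid $\mathcal{N}$ on $E$ and an arbitrary $Z\subseteq E$ with $W\cap Z=\emptyset$. The argument above never uses $\mathcal{N}=\mathcal{M}_X\vee\mathcal{M}_Y$ or $Z=X\cap Y$. You need this generality because, once a subtree has been flattened, its contracted set is everything occurring twice in that subtree, not just the overlap of two factors.

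Second, consider a node whose children $\mathcal{N}_a\times T_a$ and $\mathcal{N}_b\times T_b$ are both compound. Let $E_a$ be the ground set of $\mathcal{N}_a$ and $Z_a=E_a-T_a$, and similarly for $b$. Then:
\begin{itemize}
\item apply the identity with $(\mathcal{N}_a,Z_a,\mathcal{N}_b\times T_b)$;
\item apply it again with $(\mathcal{N}_b,Z_b,\mathcal{N}_a)$;
\item merge the contractions, which gives $(\mathcal{N}_a\vee\mathcal{N}_b)\times(T_a\cup T_b)$;
\item contract to $(T_a\cup T_b)-(T_a\cap T_b)$.
\end{itemize}
The two disjointness conditions this needs, $Z_a\cap T_b=\emptyset$ and $Z_b\cap E_a=\emptyset$, are precisely what the hypothesis that no index set occurs more than twice provides.
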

We exploit this idea primarily to write $(\mathcal{M}_{SP}\leftrightarrow_{\mathcal M} \mathcal{M}_{PQ})\leftrightarrow_{\mathcal M} \mathcal{F}_Q$ and $(\mathcal{M}_{SP}\leftrightarrow_{\mathcal M} \mathcal{M}_{PQ})\leftrightarrow_{\mathcal M} {\mathbf 0}_Q$, respectively as
$\mathcal{M}_{SP}\leftrightarrow_{\mathcal M} (\mathcal{M}_{PQ}\leftrightarrow_{\mathcal M} \mathcal{F}_Q)$ and $\mathcal{M}_{SP}\leftrightarrow_{\mathcal M} (\mathcal{M}_{PQ}\leftrightarrow_{\mathcal M} {\mathbf 0}_Q).$

The following result is proved in \cite{STHN2014}.
For better readability of the present paper,
 a slightly modified version of the proof and a relevant remark are given in \ref{sec:idtmatroid}.
\begin{theorem}
\label{thm:idtmatroid}
\begin{enumerate}
\item $\mathcal{M}_{SP}\leftrightarrow_{\mathcal M} \mathcal{M}_{PQ}=(\mathcal{M}_{SP}\vee \mathcal{M}_{PQ})\times (S\uplus Q)= (\mathcal{M}_{SP}\wedge \mathcal{M}_{PQ})\circ (S\uplus Q).$
\item $(\mathcal{M}_{SP}\leftrightarrow_{\mathcal M} \mathcal{M}_{PQ})^*= \mathcal{M}^*_{SP}\leftrightarrow_{\mathcal M} \mathcal{M}^*_{PQ}.$
 In particular, $(\mathcal{M}_{SP}\leftrightarrow_{\mathcal M} \mathcal{M}_{P})^*= \mathcal{M}^*_{SP}\leftrightarrow_{\mathcal M} \mathcal{M}^*_{P}.$
\end{enumerate}
\end{theorem}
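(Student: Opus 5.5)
Part 1 reduces to comparing bases via the duality between matroid union and intersection. First I will identify the bases of $\mathcal{M}_{SP}\vee\mathcal{M}_{PQ}$ and of $\mathcal{M}_{SP}\wedge\mathcal{M}_{PQ}$ in terms of maximally distant base pairs. Then I will show that both contractions/restrictions to $S\uplus Q$ have the same bases.

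A base of $\mathcal{M}_{SP}\vee\mathcal{M}_{PQ}$ has the form $b_{SP}\cup b_{PQ}$ with the pair maximally distant. Its contraction to $S\uplus Q$ has as bases the minimal sets $(b_{SP}\cup b_{PQ})\cap(S\uplus Q)=b_S\uplus b_Q$, taken over maximally distant pairs. Minimality corresponds to the union covering $P$ as much as possible, i.e. $|b_P\cup b'_P|$ maximal, where $b_{SP}=b_S\uplus b_P$ and $b_{PQ}=b'_P\uplus b_Q$. On the intersection side, bases of $(\mathcal{M}_{SP}\oplus\mathcal{F}_Q)\wedge(\mathcal{M}_{PQ}\oplus\mathcal{F}_S)$ are minimal sets $(b_{SP}\uplus Q)\cap(S\uplus b_{PQ})=b_S\uplus(b_P\cap b'_P)\uplus b_Q$. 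Restricting to $S\uplus Q$ gives maximal sets $b_S\uplus b_Q$, which corresponds to $|b_P\cap b'_P|$ minimal.

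The key step is a rank count showing these two families coincide. Since $|b_{SP}|$ and $|b_{PQ}|$ are fixed, we have
\[
|b_S|+|b_Q| = r(\mathcal{M}_{SP})+r(\mathcal{M}_{PQ})-|b_P|-|b'_P|.
\]
Also $|b_P\cup b'_P|+|b_P\cap b'_P|=|b_P|+|b'_P|$.

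I plan to prove the equality by a cleaner route: dualize and use the already stated identity $(\mathcal{M}^1\vee\mathcal{M}^2)^*=\mathcal{M}^{1*}\wedge\mathcal{M}^{2*}$ together with $(\mathcal{M}\times T)^*=\mathcal{M}^*\circ T$. This gives
\[
[(\mathcal{M}_{SP}\vee\mathcal{M}_{PQ})\times(S\uplus Q)]^*=(\mathcal{M}^*_{SP}\wedge\mathcal{M}^*_{PQ})\circ(S\uplus Q),
\]
after checking that $(\mathcal{M}\oplus\mathbf{0}_Q)^*=\mathcal{M}^*\oplus\mathcal{F}_Q$. So part 1 for all pairs is equivalent to part 2 for all pairs. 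I therefore still need one direct argument.

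For that argument I will show, via the base family, that the minimal sets $b_S\uplus b_Q$ arising from maximally distant pairs equal the maximal sets $b_S\uplus b_Q$ arising from pairs minimizing $|b_P\cap b'_P|$. Take any pair and compare $|b_S|+|b_Q|$. By the displayed formula, minimizing $|b_S|+|b_Q|$ is the same as maximizing $|b_P|+|b'_P|$, while maximizing it is the same as minimizing $|b_P|+|b'_P|$. These pull in opposite directions, so the sets must be identified via the exchange structure rather than by cardinality alone. This is the main obstacle.

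I will handle it through the matroid union rank formula (the appendix results). The rank of the union is
\[
r_\vee(X)=\min_{Y\subseteq X}\bigl(r_1(Y)+r_2(Y)+|X-Y|\bigr).
\]
Using it, I will show that $\mathcal{M}_{SP}\vee\mathcal{M}_{PQ}$ contracted to $S\uplus Q$ and $\mathcal{M}_{SP}\wedge\mathcal{M}_{PQ}$ restricted to $S\uplus Q$ have the same rank function. The restriction's rank is the intersection rank $r_\wedge(X)$ for $X\subseteq S\uplus Q$. The contraction's rank is $r_\vee(X\cup P)-r_\vee(P)$. Expanding both via the min-formulas with $\mathbf{0}$ and $\mathcal{F}$ padding should yield the same expression,
\[
\min_{Z\subseteq P}\bigl[r_{SP}((X\cap S)\cup Z)+r_{PQ}((X\cap Q)\cup Z)-|Z|\bigr]
\]
up to constants that cancel. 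This computation is the heart of the proof.

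Part 2 then follows from part 1 and dualization as above. The particular case is $Q=\emptyset$.
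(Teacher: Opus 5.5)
Your proposal is correct, but it takes a different route from the paper.

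\textbf{The paper's route.} The paper (Appendix~B) argues directly with bases. From a base $b^1_S\uplus b^2_Q$ of $(\mathcal{M}_{SP}\vee\mathcal{M}_{PQ})\times(S\uplus Q)$ it chooses bases $b^1_S\uplus b^1_P$, $b^2_P\uplus b^2_Q$ with $|b^1_P\cap b^2_P|$ minimum and, subject to that, $b^1_S\uplus b^2_Q$ minimal. It then observes that $b^1_S\uplus(b^1_P\cap b^2_P)\uplus b^2_Q$ is a minimal intersection, hence a base of $\mathcal{M}_{SP}\wedge\mathcal{M}_{PQ}$. Its trace on $S\uplus Q$ is maximal because all bases of the intersection have the same size. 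The converse runs the same reasoning backwards, and part~2 is exactly the dualization chain you describe.

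\textbf{Your rank-function route.} The obstacle you isolate is precisely what that base argument must manage: both sides optimize the same two quantities (minimum $|b_P\cap b'_P|$, maximum $|b_P\cup b'_P|$), but in opposite lexicographic order. Your rank computation sidesteps this, and it does close. Write $E=S\uplus P\uplus Q$, $X_S=X\cap S$, $X_Q=X\cap Q$, and
$f(X)=\min_{Z\subseteq P}[r_{SP}(X_S\cup Z)+r_{PQ}(Z\cup X_Q)-|Z|]$.

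On the union side, the unit-increase property of rank lets you take $Y\supseteq X_S\cup X_Q$ in the union rank formula. Hence $r_\vee(X\cup P)=|P|+f(X)$, and the contraction has rank $f(X)-f(\emptyset)$.

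On the intersection side you must go through duality. Use $r_\wedge(X)=|X|+r_{\mathcal N}(E-X)-r_{\mathcal N}(E)$, where $\mathcal N$ is the union of the $\mathbf{0}$-padded duals. Expand each dual rank by $r^*_{SP}(A)=|A|+r_{SP}((S\uplus P)-A)-r(\mathcal{M}_{SP})$, and similarly for $\mathcal{M}_{PQ}$. Then substitute $Z\mapsto P-Z$. All constants cancel, and the restriction also has rank $f(X)-f(\emptyset)$.

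\textbf{Comparison.} Your approach buys an explicit, symmetric rank formula for $\mathcal{M}_{SP}\leftrightarrow_{\mathcal M}\mathcal{M}_{PQ}$, with no bookkeeping of lexicographic optima; the remark following the paper's proof records the extra conditions that bookkeeping needs. It costs reliance on the matroid union rank theorem, which is not among the basic results the paper lists. It also does not directly deliver the base-level description the paper reuses afterwards, e.g.\ in Lemma~\ref{lem:baseexchange}: bases of the link are traces on $S\uplus Q$ of maximally distant pairs covering as much of $P$ as possible.
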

This result is analogous to the result for vector spaces, where we have $(\mathcal{V}_{SP}\leftrightarrow_{\mathcal V} \mathcal{V}_{PQ})^{\perp}= \mathcal{V}^{\perp}_{SP}\leftrightarrow_{\mathcal V} \mathcal{V}^{\perp}_{(-P)Q},$
 where $\mathcal{V}_{A}^{\perp} \equiv \{g_A : \sum_{e\in A} (g_A(e)f_A(e))=0, f_A\in \mathcal{V}_A\}$ 
 and $\mathcal{V}_{(-P)Q}^{\perp} \equiv \{(-g_P, f_Q) : (g_P, f_Q)\in \mathcal{V}^{\perp}_{PQ}$ (\cite{HNarayanan1986a}, \cite{HNarayanan1997}).

\begin{remark}
We note that there are situations where $\mathcal{M}_{SP}\leftrightarrow_{\mathcal M}\mathcal{M}_{PQ}=\mathcal{M}_S\oplus \mathcal{M}_Q.$
For instance, when $\mathcal{M}_{SP}= \mathcal{M}_S\oplus \mathcal{M}_P, \mathcal{M}_{PQ}= \mathcal{M}_P\oplus \mathcal{M}_Q,$
 we will have $\mathcal{M}_{SP}\leftrightarrow_{\mathcal M}\mathcal{M}_{PQ}=\mathcal{M}_S\oplus\mathcal{M}_Q.$
\end{remark}
We give below a lower bound on the size of $|P|,$ when
 $\mathcal{M}_{SQ}=\mathcal{M}_{SP}\leftrightarrow_{\mathcal M}\mathcal{M}_{PQ}.$
\begin{theorem}
\label{thm:lowerboundP}
\begin{enumerate}
\item Let $\mathcal{M}_{SQ}=\mathcal{M}_{SP}\leftrightarrow_{\mathcal M}\mathcal{M}_{PQ}.$ Then
$|P|\geq r(\mathcal{M}_{SP}\circ S)-r(\mathcal{M}_{SP}\times S)\geq r(\mathcal{M}_{SQ}\circ S)-r(\mathcal{M}_{SQ}\times S)= r(\mathcal{M}_{SQ}\circ Q)-r(\mathcal{M}_{SQ}\times Q).$
\item Let $\mathcal{M}_{SQ}=\mathcal{M}_{SP}\leftrightarrow_{\mathcal M}\mathcal{M}_{PQ}$ and let $|P|= r(\mathcal{M}_{SQ}\circ S)-r(\mathcal{M}_{SQ}\times S)= r(\mathcal{M}_{SQ}\circ Q)-r(\mathcal{M}_{SQ}\times Q).$ Then $P$ is contained in both a base and a cobase of $\mathcal{M}_{SP}$ and $\mathcal{M}_{PQ}.$
\end{enumerate}
\end{theorem}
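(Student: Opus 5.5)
The plan is to reduce everything to rank computations, using restricted associativity (Lemma~\ref{lem:restassoc}) to express restrictions and contractions of $\mathcal{M}_{SQ}$ as linkings with $\mathcal{M}_{SP}$, and duality (Theorem~\ref{thm:idtmatroid}) to obtain part~2 from part~1.

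\emph{First inequality.} Since $r(\mathcal{M}_{SP}) = r(\mathcal{M}_{SP}\times S)+r(\mathcal{M}_{SP}\circ P)$, we have
\[
r(\mathcal{M}_{SP}\circ S)-r(\mathcal{M}_{SP}\times S) = r(\mathcal{M}_{SP}\circ S)+r(\mathcal{M}_{SP}\circ P)-r(\mathcal{M}_{SP}).
\]
By submodularity, $r(\mathcal{M}_{SP}\circ S)\le r(\mathcal{M}_{SP})$, so the right side is at most $r(\mathcal{M}_{SP}\circ P)\le |P|$. I record the sharper chain
\[
|P|\ge r(\mathcal{M}_{SP}\circ P)\ge r(\mathcal{M}_{SP}\circ S)-r(\mathcal{M}_{SP}\times S),
\]
since it is needed in part~2.

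\emph{Second inequality.} By Lemma~\ref{lem:restassoc},
\[
\mathcal{M}_{SQ}\circ S=\mathcal{M}_{SP}\leftrightarrow_{\mathcal M}(\mathcal{M}_{PQ}\circ P), \qquad \mathcal{M}_{SQ}\times S=\mathcal{M}_{SP}\leftrightarrow_{\mathcal M}(\mathcal{M}_{PQ}\times P).
\]
So it suffices to prove a sandwich lemma: for any matroid $\mathcal{M}_P$,
\[
r(\mathcal{M}_{SP}\times S)\le r(\mathcal{M}_{SP}\leftrightarrow_{\mathcal M}\mathcal{M}_P)\le r(\mathcal{M}_{SP}\circ S).
\]
Let $\mathcal{N}:=\mathcal{M}_{SP}\vee\mathcal{M}_P$ on $S\uplus P$. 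Then $r(\mathcal{N}\times S)=r_{\mathcal N}(S\uplus P)-r_{\mathcal N}(P)$.

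For the upper bound, use $r_{\mathcal N}(S\uplus P)-r_{\mathcal N}(P)\le r_{\mathcal N}(S)$. Since $\mathcal{M}_P\oplus\mathbf 0_S$ has rank zero on $S$, we get $\mathcal N\circ S=\mathcal{M}_{SP}\circ S$, and the bound follows.

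For the lower bound, take a base of $\mathcal N\circ P$ of the form $b_1\cup b_2$, where $b_1\subseteq P$ is independent in $\mathcal{M}_{SP}$ and $b_2$ is independent in $\mathcal{M}_P$. Let $c$ be a base of $\mathcal{M}_{SP}\times S$. By the definition of contraction, $b_1\cup c$ is independent in $\mathcal{M}_{SP}$, because $b_1$ extends to a base of $\mathcal{M}_{SP}\circ P$. Hence $c\cup b_1\cup b_2$ is independent in $\mathcal N$, which gives $r_{\mathcal N}(S\uplus P)\ge |c|+r_{\mathcal N}(P)$.

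Applying the sandwich lemma twice gives $r(\mathcal{M}_{SQ}\circ S)\le r(\mathcal{M}_{SP}\circ S)$ and $r(\mathcal{M}_{SQ}\times S)\ge r(\mathcal{M}_{SP}\times S)$, which together yield the second inequality.

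\emph{The final equality.} It is the symmetric identity
\[
r(\mathcal{M}_{SQ}\circ S)-r(\mathcal{M}_{SQ}\times S)=r(\mathcal{M}_{SQ}\circ S)+r(\mathcal{M}_{SQ}\circ Q)-r(\mathcal{M}_{SQ}),
\]
whose right side is symmetric in $S,Q$.

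\emph{Part 2.} Under the hypothesis, every term in the chain
\[
|P|\ge r(\mathcal{M}_{SP}\circ P)\ge r(\mathcal{M}_{SP}\circ S)-r(\mathcal{M}_{SP}\times S)\ge r(\mathcal{M}_{SQ}\circ S)-r(\mathcal{M}_{SQ}\times S)
\]
is equal. Thus $r(\mathcal{M}_{SP}\circ P)=|P|$, i.e.\ $P$ is independent and hence contained in a base of $\mathcal{M}_{SP}$. By symmetry (working with $Q$ instead of $S$), the same holds for $\mathcal{M}_{PQ}$.

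For the cobase statement, I would dualize. By Theorem~\ref{thm:idtmatroid}, $\mathcal{M}^*_{SQ}=\mathcal{M}^*_{SP}\leftrightarrow_{\mathcal M}\mathcal{M}^*_{PQ}$. The quantity $r(\circ S)-r(\times S)$ is invariant under duality: using $r^*(X)=|X|-r(S\uplus Q)+r((S\uplus Q)-X)$, we get
\[
r^*(\mathcal{M}_{SQ}\circ S)-r^*(\mathcal{M}_{SQ}\times S)=r(\mathcal{M}_{SQ}\circ Q)-r(\mathcal{M}_{SQ}\times Q),
\]
which equals the original quantity by the symmetric identity. Hence the dual linking satisfies the hypothesis of part~2, so $P$ is independent in $\mathcal{M}^*_{SP}$ and in $\mathcal{M}^*_{PQ}$, i.e.\ $P$ is contained in a cobase of each.

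The main obstacle is the lower half of the sandwich lemma, $r(\mathcal{M}_{SP}\times S)\le r(\mathcal{M}_{SP}\leftrightarrow_{\mathcal M}\mathcal{M}_P)$. It needs the explicit construction above, together with care that $\mathcal N\circ P=(\mathcal{M}_{SP}\circ P)\vee\mathcal{M}_P$, so that bases of $\mathcal N\circ P$ really split as $b_1\cup b_2$.
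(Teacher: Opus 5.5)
Your proof is correct. Part~1 follows the paper's route: restricted associativity writes $\mathcal{M}_{SQ}\circ S$ and $\mathcal{M}_{SQ}\times S$ as $\mathcal{M}_{SP}$ linked with $\mathcal{M}_{PQ}\circ P$ and with $\mathcal{M}_{PQ}\times P$. Then comes the rank sandwich $r(\mathcal{M}_{SP}\times S)\le r(\mathcal{M}_{SP}\leftrightarrow_{\mathcal M}\mathcal{M}_P)\le r(\mathcal{M}_{SP}\circ S)$, and finally the connectivity identity. The paper passes over the sandwich with ``by the definition of the linking operation'' and proves a stronger strong-map version only later, as Lemma~\ref{lem:basecontainedcontains}, part~1. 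Your explicit argument fills that gap cleanly, especially the use of the definition of contraction to get $c\cup b_1$ independent in $\mathcal{M}_{SP}$.

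Your route genuinely differs in the cobase half of part~2. You dualize, using Theorem~\ref{thm:idtmatroid} and the duality invariance of the link width; this is valid and not circular, since the base half was proved first. The paper instead reads the cobase statement off the equality $|P|=r(\mathcal{M}_{SP}\circ P)-r(\mathcal{M}_{SP}\times P)$, which forces both $r(\mathcal{M}_{SP}\circ P)=|P|$ and $r(\mathcal{M}_{SP}\times P)=0$. Your own chain already contains this. Your middle inequality $r(\mathcal{M}_{SP}\circ P)\ge r(\mathcal{M}_{SP}\circ S)-r(\mathcal{M}_{SP}\times S)$ came from $r(\mathcal{M}_{SP}\circ S)\le r(\mathcal{M}_{SP})$. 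Equality there therefore says that $S$ spans $\mathcal{M}_{SP}$, i.e.\ $P$ lies in a cobase, and similarly for $\mathcal{M}_{PQ}$, so the duality theorem for linking is a heavier tool than you need.

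A minor labelling point: $r(\mathcal{M}_{SP}\circ S)\le r(\mathcal{M}_{SP})$ is monotonicity, not submodularity.
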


\begin{proof}
\begin{enumerate}
\item  We have $\mathcal{M}_{SQ}\circ S= (\mathcal{M}_{SP}\leftrightarrow_{\mathcal M}\mathcal{M}_{PQ}) \leftrightarrow_{\mathcal M} \mathcal{F}_Q= \mathcal{M}_{SP}\leftrightarrow_{\mathcal M} (\mathcal{M}_{PQ}\leftrightarrow_{\mathcal M} \mathcal{F}_Q).$
 Next $\mathcal{M}_{SQ}\times S= (\mathcal{M}_{SP}\leftrightarrow_{\mathcal M}\mathcal{M}_{PQ}) \leftrightarrow_{\mathcal M} {\mathbf 0}_Q= \mathcal{M}_{SP}\leftrightarrow_{\mathcal M} (\mathcal{M}_{PQ}\leftrightarrow_{\mathcal M} {\mathbf 0}_Q).$ Therefore, by the definition of the linking operation, $r(\mathcal{M}_{SQ}\circ S) \leq
 r(\mathcal{M}_{SP}\circ S)$ and
$r(\mathcal{M}_{SQ}\times S) \geq r(\mathcal{M}_{SP}\times S).$
 Since $|P|\geq r(\mathcal{M}_{SP}\circ P)-r(\mathcal{M}_{SP}\times P)= r(\mathcal{M}_{SP}\circ S)-r(\mathcal{M}_{SP}\times S),$
 and $r(\mathcal{M}_{SQ}\circ S)-r(\mathcal{M}_{SQ}\times S)= r(\mathcal{M}_{SQ}\circ Q)-r(\mathcal{M}_{SQ}\times Q),$
 the result follows.

\item  By part 1 above, $r(\mathcal{M}_{SP}\circ S)-r(\mathcal{M}_{SP}\times S)\geq r(\mathcal{M}_{SQ}\circ S)-r(\mathcal{M}_{SQ}\times S)$ and $ r(\mathcal{M}_{PQ}\circ Q)-r(\mathcal{M}_{PQ}\times Q)\geq r(\mathcal{M}_{SQ}\circ Q)-r(\mathcal{M}_{SQ}\times Q).$ We have $|P|\geq r(\mathcal{M}_{SP}\circ P)-r(\mathcal{M}_{SP}\times P)$
 and also $|P|\geq r(\mathcal{M}_{PQ}\circ P)-r(\mathcal{M}_{PQ}\times P).$
If $|P|= r(\mathcal{M}_{SQ}\circ S)-r(\mathcal{M}_{SQ}\times S) =r(\mathcal{M}_{SQ}\circ Q)-r(\mathcal{M}_{SQ}\times Q),$ we must therefore have $|P|= r(\mathcal{M}_{SP}\circ P)-r(\mathcal{M}_{SP}\times P)$
 and also $|P|= r(\mathcal{M}_{PQ}\circ P)-r(\mathcal{M}_{PQ}\times P).$
 This implies that $|P|= r(\mathcal{M}_{SP}\circ P)= r(\mathcal{M}_{PQ}\circ P)$ and $r(\mathcal{M}_{SP}\times P)=r(\mathcal{M}_{PQ}\times P)=0$ and therefore that
 $P$ is contained in both a base and a cobase of $\mathcal{M}_{SP}$ and $\mathcal{M}_{PQ}.$
\end{enumerate}
\end{proof}

\begin{definition}
We define the \textbf{$\{S,Q\}$-link width} $\lambda_{SQ}(\mathcal{M}_{SQ})$
 of the matroid $\mathcal{M}_{SQ}$ to be $ r(\mathcal{M}\circ S)-r(\mathcal{M}\times S)= r(\mathcal{M}\circ Q)-r(\mathcal{M}\times Q).$
\end{definition}

We note that the $\{S,Q\}$-link width of $\mathcal{M}_{SQ}$ is called the \textbf{connectivity function at $S$ ($Q$)} of $\mathcal{M}_{SQ}$ in \cite{oxley}.
 From Theorem \ref{thm:lowerboundP}, this is a lower bound for $|P|$ when $\mathcal{M}_{SQ}= \mathcal{M}_{SP}\leftrightarrow_{\mathcal M}\mathcal{M}_{PQ}.$
When this lower bound is achieved, we say $\mathcal{M}_{SQ}$ is \textbf{minimally decomposed} into $\mathcal{M}_{SP}\leftrightarrow_{\mathcal M}\mathcal{M}_{PQ}.$
 Minimal decomposition of a matroid $\mathcal{M}_{SQ}$ achieving the above lower bound, is not always possible. But, when it is, we have a better understanding of the structure of $\mathcal{M}_{SQ},$ and can simplify procedures involving the matroid.

\begin{remark}
\label{rem:dissimilarmatroid}
 Every matroid $\mathcal{M}_{SQ}$ can be trivially decomposed
 as $\mathcal{M}_{SQ}= (\mathcal{M}_{SQ})_{SQ'}\leftrightarrow_{\mathcal M} I_{QQ'}$ where $I_{QQ'}$ has elements
 $e,e'$ in parallel and has $Q$ as a base \cite{STHN2014}.
When possible, minimization of $|P|,$
 keeping $\mathcal{M}_{SP}\leftrightarrow_{\mathcal M} \mathcal{M}_{PQ}$ invariant, with
$|P|= r((\mathcal{M}_{SP}\leftrightarrow_{\mathcal M} \mathcal{M}_{PQ})\circ S) - r((\mathcal{M}_{SP}\leftrightarrow_{\mathcal M} \mathcal{M}_{PQ})\times S),$
  gives advantages both in understanding the structure of $\mathcal{M}_{SQ}$ and in algorithms for building it.
 Examples can be seen in Subsection \ref{subsec:amalgamsplicing} and in Section \ref{subsec:bipartitioncompletion}.
But this minimization
is not always possible.
Figure \ref{fig:matroid1} shows a graph whose (graphic) matroid $\mathcal{M}_{SQ}$ cannot be decomposed
minimally as $\mathcal{M}_{SP}\leftrightarrow_{\mathcal M} \mathcal{M}_{PQ},$ for the specified partition $(S,Q),$
 where $S \equiv \{e_1,e_2,e_3\}, Q \equiv \{e_4,e_5,e_6\}.$
 A proof of the impossibility is given in \ref{sec:mindecompmatroid}.
\end{remark}
\subsection{Amalgam and splicing matroids}
\label{subsec:amalgamsplicing}
 A natural question that arises is whether matroid composition can be used
 to build a matroid $\mathcal{M}_{STQ}$ in which specified matroids
 $\mathcal{M}_{ST}, \mathcal{M}_{TQ}$ occur as minors.
We say $\mathcal{M}_{STQ}$ is an \textbf{amalgam} of
 $\mathcal{M}_{ST}, \mathcal{M}_{TQ}$ iff $\mathcal{M}_{STQ}\circ (S\uplus T)= \mathcal{M}_{ST}, \mathcal{M}_{STQ}\circ (T\uplus Q)= \mathcal{M}_{TQ}$ \cite{oxley}.
Amalgams do not always exist even if the necessary condition
 $\mathcal{M}_{ST}\circ T= \mathcal{M}_{TQ}\circ T$ is satisfied.

We say $\mathcal{M}_{STQ}$ is a \textbf{splicing} of
 $\mathcal{M}_{ST}, \mathcal{M}_{TQ}$ iff $\mathcal{M}_{STQ}\times (S\uplus T)= \mathcal{M}_{ST}, \mathcal{M}_{STQ}\circ (T\uplus Q)= \mathcal{M}_{TQ}$ \cite{boninsplice}. It follows that $\mathcal{M}_{ST}\circ T= \mathcal{M}_{STQ}\times (S\uplus T)\circ T=
 \mathcal{M}_{STQ}\circ (Q\uplus T)\times T=\mathcal{M}_{TQ}\times T.$
 It turns out that the necessary condition $\mathcal{M}_{ST}\circ T= \mathcal{M}_{TQ}\times T,$
  is also sufficient for the splicing of $\mathcal{M}_{ST}, \mathcal{M}_{TQ},$ to exist, as shown in \cite{boninsplice}.

In this subsection we examine how the techniques of this paper can be used 
to simplify procedures for construction of amalgam and splicing matroids 
 when suitable decompositions exist.

Suppose that $\mathcal{M}_{ST}= \mathcal{M}_{SP_1}\leftrightarrow_{\mathcal M} \mathcal{M}_{TP_1},\mathcal{M}_{TQ}= \mathcal{M}_{TP_2}\leftrightarrow_{\mathcal M} \mathcal{M}_{QP_2},$ with the decompositions being minimal. Therefore
 $P_1$ can be included in a base as well as a cobase of $\mathcal{M}_{SP_1}$ and $ \mathcal{M}_{TP_1}$
and $P_2$ can be included in a base as well as a cobase of $\mathcal{M}_{QP_2}$ and $ \mathcal{M}_{TP_2}.$
We then have $\mathcal{M}_{ST}\circ T= (\mathcal{M}_{SP_1}\leftrightarrow_{\mathcal M} \mathcal{M}_{TP_1})\leftrightarrow_{\mathcal M} \mathcal{F}_S=
\mathcal{M}_{TP_1}\leftrightarrow_{\mathcal M} (\mathcal{M}_{SP_1}\leftrightarrow_{\mathcal M} \mathcal{F}_S)= 
\mathcal{M}_{TP_1}\leftrightarrow_{\mathcal M} (\mathcal{M}_{SP_1}\circ P_1)=
\mathcal{M}_{TP_1}\leftrightarrow_{\mathcal M} \mathcal{F}_{P_1}=
\mathcal{M}_{TP_1}\circ T$ and 
$\mathcal{M}_{ST}\times T= (\mathcal{M}_{SP_1}\leftrightarrow_{\mathcal M} \mathcal{M}_{TP_1})\leftrightarrow_{\mathcal M} {\mathbf 0}_S=
\mathcal{M}_{TP_1}\leftrightarrow_{\mathcal M} (\mathcal{M}_{SP_1}\leftrightarrow_{\mathcal M} {\mathbf 0}_S)=
\mathcal{M}_{TP_1}\leftrightarrow_{\mathcal M} (\mathcal{M}_{SP_1}\times P_1)=
\mathcal{M}_{TP_1}\leftrightarrow_{\mathcal M} {\mathbf 0}_{P_1}=
\mathcal{M}_{TP_1}\times T.$ Similarly 
$\mathcal{M}_{TQ}\circ T= \mathcal{M}_{TP_2}\circ T$
 and $\mathcal{M}_{TQ}\times T= \mathcal{M}_{TP_2}\times T.$

Suppose we have $\mathcal{M}_{TP_1P_2}$ as an amalgam of $\mathcal{M}_{TP_1}$
 and $\mathcal{M}_{TP_2}.$
 We again use restricted associativity of linking (Lemma \ref{lem:restassoc}) to show that 
$(\mathcal{M}_{SP_1}\leftrightarrow_{\mathcal M} \mathcal{M}_{TP_1P_2}\leftrightarrow_{\mathcal M} \mathcal{M}_{QP_2})
 \circ (S\uplus T)=\mathcal{M}_{ST}.$ 

 We have 
$(\mathcal{M}_{SP_1}\leftrightarrow_{\mathcal M} \mathcal{M}_{TP_1P_2}\leftrightarrow_{\mathcal M} \mathcal{M}_{QP_2})
 \circ (S\uplus T)=
(\mathcal{M}_{SP_1}\leftrightarrow_{\mathcal M} \mathcal{M}_{TP_1P_2}\leftrightarrow_{\mathcal M} \mathcal{M}_{QP_2})\leftrightarrow_{\mathcal M} \mathcal{F}_Q
= 
\mathcal{M}_{SP_1}\leftrightarrow_{\mathcal M} \mathcal{M}_{TP_1P_2}\leftrightarrow_{\mathcal M} (\mathcal{M}_{QP_2}\leftrightarrow_{\mathcal M} \mathcal{F}_Q)
=
\mathcal{M}_{SP_1}\leftrightarrow_{\mathcal M} \mathcal{M}_{TP_1P_2}\leftrightarrow_{\mathcal M} (\mathcal{M}_{QP_2}\circ P_2)
=
\mathcal{M}_{SP_1}\leftrightarrow_{\mathcal M} (\mathcal{M}_{TP_1P_2}\leftrightarrow_{\mathcal M} \mathcal{F}_{P_2})
=
\mathcal{M}_{SP_1}\leftrightarrow_{\mathcal M} (\mathcal{M}_{TP_1P_2}\circ (T\uplus P_1))
=
\mathcal{M}_{SP_1}\leftrightarrow_{\mathcal M} \mathcal{M}_{TP_1}= 
\mathcal{M}_{ST}.$
 Similarly 
$(\mathcal{M}_{SP_1}\leftrightarrow_{\mathcal M} \mathcal{M}_{TP_1P_2}\leftrightarrow_{\mathcal M} \mathcal{M}_{QP_2})
 \circ (Q\uplus T)= \mathcal{M}_{QT}.$
Therefore $(\mathcal{M}_{SP_1}\leftrightarrow_{\mathcal M} \mathcal{M}_{TP_1P_2}\leftrightarrow_{\mathcal M} \mathcal{M}_{QP_2})$ is an amalgam of $\mathcal{M}_{ST}$ and $\mathcal{M}_{QT}.$

A consequence of the above discussion is that, in order to construct
 counterexamples where amalgams do not exist, we can restrict our attention to
 the case where $P_1$ can be included in a base as well as a cobase of $\mathcal{M}_{SP_1}$ and $\mathcal{M}_{TP_1}$
and $P_2$ can be included in a base as well as a cobase of $\mathcal{M}_{QP_2}$ and $\mathcal{M}_{TP_2}.$
 The counterexample given in \cite{oxley} (Example 7.2.4) is of this kind.

Next let us consider the case where $\mathcal{M}_{TP_1P_2}$ is a splicing
 of $\mathcal{M}_{TP_1}$ and $\mathcal{M}_{TP_2},$
 i.e., $\mathcal{M}_{TP_1P_2}\times (T\uplus P_1)= \mathcal{M}_{TP_1}$
 and
 $\mathcal{M}_{TP_1P_2}\circ (T\uplus P_2)= \mathcal{M}_{TP_2}.$

 We have $(\mathcal{M}_{SP_1}\leftrightarrow_{\mathcal M} \mathcal{M}_{TP_1P_2}\leftrightarrow_{\mathcal M} \mathcal{M}_{QP_2})
 \times (S\uplus T)=
(\mathcal{M}_{SP_1}\leftrightarrow_{\mathcal M} \mathcal{M}_{TP_1P_2}\leftrightarrow_{\mathcal M} \mathcal{M}_{QP_2})\leftrightarrow_{\mathcal M} {\mathbf 0}_Q
=
\mathcal{M}_{SP_1}\leftrightarrow_{\mathcal M} \mathcal{M}_{TP_1P_2}\leftrightarrow_{\mathcal M} (\mathcal{M}_{QP_2}\leftrightarrow_{\mathcal M} {\mathbf 0}_Q)
=
\mathcal{M}_{SP_1}\leftrightarrow_{\mathcal M} \mathcal{M}_{TP_1P_2}\leftrightarrow_{\mathcal M} (\mathcal{M}_{QP_2}\times P_2)
=
\mathcal{M}_{SP_1}\leftrightarrow_{\mathcal M} \mathcal{M}_{TP_1P_2}\leftrightarrow_{\mathcal M} {\mathbf 0}_{P_2}
=
\mathcal{M}_{SP_1}\leftrightarrow_{\mathcal M} (\mathcal{M}_{TP_1P_2}\times (T\uplus P_1))
=
\mathcal{M}_{SP_1}\leftrightarrow_{\mathcal M} \mathcal{M}_{TP_1}=
\mathcal{M}_{ST}.$
 Similarly
 it can be seen that $(\mathcal{M}_{SP_1}\leftrightarrow_{\mathcal M} \mathcal{M}_{TP_1P_2}\leftrightarrow_{\mathcal M} \mathcal{M}_{QP_2})
 \circ (Q\uplus T)= \mathcal{M}_{TQ}.$

\subsection{Minimization of $|P|$ in $\mathcal{M}_{SP}\leftrightarrow_{\mathcal M} \mathcal{M}_{PQ},$ with conditions on $\mathcal{M}_{SP}, \mathcal{M}_{PQ}^*$}
\label{subsec:partmin}

The literature has instances where given two matroids $\mathcal{M}_S, \mathcal{M}_Q,$ a new matroid $\mathcal{M}_{SQ}$ is defined which has the former matroids as restrictions or contractions, for example the free product $\mathcal{M}_S \Box \mathcal{M}_Q$ of \cite{crapofree} or
 the semidirect sum of \cite{boninsemidirect}. Using the following lemmas, we show, if $\mathcal{M}_{SP}\times P = \mathcal{M}_{PQ}^*\times P, $
that more can be done.

\begin{definition}
We say matroid $\mathcal{M}_S^1 \geq \mathcal{M}_S^2$ or matroid $\mathcal{M}_S^2 \leq \mathcal{M}_S^1$ iff every base of $\mathcal{M}_S^1\circ T, T\subseteq S, $ contains a base of $\mathcal{M}_S^2\circ T$ and every base of $\mathcal{M}_S^2\circ T$ is contained in a base of $\mathcal{M}_S^1\circ T.$
\end{definition}

We note that $\mathcal{M}_S^1 \geq \mathcal{M}_S^2$ iff there is a \textit{strong map} from $\mathcal{M}_S^1$ to $\mathcal{M}_S^2,$ i.e., iff
 for every $T_1\subseteq T_2\subseteq S,$ we have
$r_1(T_2)-r_1(T_1)\geq r_2(T_2)-r_2(T_1),$
 where $r_1(\cdot), r_2(\cdot)$ are rank functions of $\mathcal{M}_S^1, \mathcal{M}_S^2$
 respectively \cite{oxley}.

\begin{lemma}
\label{lem:basecontainedcontains}
\begin{enumerate}
\item $\mathcal{M}_{SP}\circ S \geq \mathcal{M}_{SP}\leftrightarrow_{\mathcal M} \mathcal{M}_P \geq \mathcal{M}_{SP}\times S.$
\item $\mathcal{M}_{SP}\leftrightarrow_{\mathcal M} (\mathcal{M}^*_{SP}\circ P) = \mathcal{M}_{SP}\circ S,$ and $\mathcal{M}_{SP}\leftrightarrow_{\mathcal M} (\mathcal{M}^*_{SP}\times P) = \mathcal{M}_{SP}\times S.$
\end{enumerate}
\end{lemma}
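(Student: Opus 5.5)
Throughout, write $\mathcal{M}_{SP}\leftrightarrow_{\mathcal M}\mathcal{M}_P=(\mathcal{M}_{SP}\vee\mathcal{M}_P)\times S$, where $\mathcal{M}_P$ is regarded as $\mathcal{M}_P\oplus\mathbf{0}_S$ on $S\uplus P$. The plan is to work directly with bases of the union and use two facts about unions. First, a base of $\mathcal{M}_{SP}\vee\mathcal{M}_P$ can be chosen as $b_{SP}\cup b_P$ with $b_{SP}$, $b_P$ maximally distant. Second, by the restriction identity $(\mathcal{M}^1\vee\mathcal{M}^2)\circ T=(\mathcal{M}^1\circ T)\vee(\mathcal{M}^2\circ T)$, the bases of $(\mathcal{M}_{SP}\vee\mathcal{M}_P)\circ P$ are maximal sets $(b_{SP}\cap P)\cup b_P$. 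For part 2 we will also use Theorem~\ref{thm:idtmatroid} (duality of linking).

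\textbf{Part 1.} We must check that every base of $(\mathcal{M}_{SP}\leftrightarrow_{\mathcal M}\mathcal{M}_P)\circ T$ sits between bases of the two outer matroids restricted to $T$.

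Rather than use the subset-by-subset base definition, I would use the strong-map rank criterion recalled just before the lemma. Let $r$ be the rank function of $\mathcal{M}_{SP}$ and $\rho$ the rank function of $\mathcal{N}:=\mathcal{M}_{SP}\vee\mathcal{M}_P$. Then:
\begin{itemize}
\item the linking has rank $\rho_L(X)=\rho(X\cup P)-\rho(P)$ for $X\subseteq S$;
\item $\mathcal{M}_{SP}\circ S$ has rank $r(X)$;
\item $\mathcal{M}_{SP}\times S$ has rank $r(X\cup P)-r(P)$.
\end{itemize}

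For $T_1\subseteq T_2\subseteq S$ we need
\[ r(T_2)-r(T_1)\ \ge\ \rho(T_2\cup P)-\rho(T_1\cup P)\ \ge\ r(T_2\cup P)-r(T_1\cup P).\]

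For the left inequality, use the union rank formula $\rho(Y)=\min_{Z\subseteq Y}\bigl(r(Z)+r_P(Z\cap P)+|Y-Z|\bigr)$, together with submodularity of $\rho$:
\[\rho(T_2\cup P)-\rho(T_1\cup P)\le \rho(T_2)-\rho(T_1).\]
On subsets of $S$ we have $\rho=r$, since $\mathcal{M}_P$ is zero on $S$. This gives the left inequality.

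For the right inequality, the strong-map idea is that an element added to $T_1\cup P$ increases $\rho$ whenever it increases $r$, because $\mathcal{M}_{SP}\le\mathcal{N}$ as a strong map in the right direction. I would prove the elementwise version: if $e\in S$ and $r(Y\cup e)>r(Y)$ with $Y\supseteq P$, then $\rho(Y\cup e)>\rho(Y)$. Take $b_{SP}\cup b_P$ spanning $Y$ in $\mathcal{N}$ with $b_{SP}$ a maximal independent set of $\mathcal{M}_{SP}$ inside $Y$; this is possible by Lemma~\ref{lem:basecontainedcontains}-style exchange, or by a matroid-union augmentation argument. Adding $e$ to $b_{SP}$ keeps it independent, so the rank of the union increases. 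Iterating this over $T_2-T_1$ gives the inequality. I expect choosing such a union base, one whose $\mathcal{M}_{SP}$-part is maximal in $Y$, to be the main obstacle. I would try the standard fact that in $\mathcal{M}^1\vee\mathcal{M}^2$ any independent set of $\mathcal{M}^1$ in $Y$ extends to a union-spanning decomposition, via the matroid partition augmenting path argument.

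\textbf{Part 2.} Take $\mathcal{M}_P=\mathcal{M}^*_{SP}\circ P$.

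Pick a base $b_{SP}$ of $\mathcal{M}_{SP}$ with $b_{SP}\cap S$ a base of $\mathcal{M}_{SP}\circ S$. Then $b_{SP}\cap P$ is a base of $\mathcal{M}_{SP}\times P$. Its complement $P-b_{SP}$ contains a base $b^*_P$ of $\mathcal{M}^*_{SP}\circ P$, because $P-b_{SP}=(S\uplus P-b_{SP})\cap P$ is a maximal intersection of a cobase with $P$. So $b_{SP}\cup b^*_P\supseteq P$, and also $b^*_P\cup(b_{SP}\cap P)=P$ up to rank. Hence the union contains $P$ together with $b_{SP}\cap S$.

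This union is a base of $\mathcal{N}$ with $P\subseteq$ it, so contracting to $S$ yields base $b_{SP}\cap S$. Hence every base of $\mathcal{M}_{SP}\circ S$ is a base of the linking. Combined with part 1's inequality $\mathcal{M}_{SP}\circ S\ge$ linking, and equal ranks, the two matroids are equal.

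The second identity follows from the first by duality. Apply Theorem~\ref{thm:idtmatroid} and the first identity to $\mathcal{M}^*_{SP}$:
\[(\mathcal{M}_{SP}\leftrightarrow_{\mathcal M}(\mathcal{M}^*_{SP}\times P))^*=\mathcal{M}^*_{SP}\leftrightarrow_{\mathcal M}(\mathcal{M}_{SP}\circ P)=\mathcal{M}^*_{SP}\circ S=(\mathcal{M}_{SP}\times S)^*.\]
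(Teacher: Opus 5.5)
Your proof is correct. Part 1 takes a different route from the paper, and so does the second identity of part 2.

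\textbf{Part 1.} The paper works directly with bases at $T=S$. It writes a base $b_S$ of the linking via a maximally distant pair, $b_S\uplus b^1_P$ in $\mathcal{M}_{SP}$ and $b^2_P$ in $\mathcal{M}_P$. Fixing such a pair and exchanging the $S$-part, it shows two things: every base of $\mathcal{M}_{SP}\circ S$ contains a base of the linking, and every base of $\mathcal{M}_{SP}\times S$ is contained in one. The extension to arbitrary $T\subseteq S$ is left implicit. It follows from $(\mathcal{M}_{SP}\leftrightarrow_{\mathcal M}\mathcal{M}_P)\circ T=(\mathcal{M}_{SP}\circ(T\uplus P))\leftrightarrow_{\mathcal M}\mathcal{M}_P$. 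Your rank-function argument proves the strong-map inequalities for all pairs $T_1\subseteq T_2$ at once, so it needs no such reduction. It also makes the left inequality a one-line consequence of submodularity. The cost is that you depend on the equivalence between the paper's base-containment definition of $\geq$ and the rank criterion, which the paper only quotes.

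\textbf{Part 2.} Your first identity is the paper's argument. Your derivation of the second identity by duality via Theorem~\ref{thm:idtmatroid} is shorter than the paper's ``similar'' direct argument. It is legitimate, because that theorem's proof does not use this lemma.

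\textbf{The ``main obstacle'' is not one.} The condition $r(Y\cup e)>r(Y)$ says $e\notin\mathrm{cl}_{\mathcal{M}_{SP}}(Y)$. So $e$ can be added to \emph{every} $\mathcal{M}_{SP}$-independent subset of $Y$. For any decomposition $I_1\uplus I_2$ of a maximal $\mathcal{N}$-independent subset of $Y$, the set $(I_1\cup e)\uplus I_2$ is then $\mathcal{N}$-independent. You need neither maximality of $I_1$ nor augmenting paths.

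Do not invoke the ``standard fact'' as you phrase it, namely that any prescribed $\mathcal{M}^1$-independent subset of $Y$ can serve as the first part of a maximum union-independent set. That is false. Take $\mathcal{M}^1=U_{1,2}$ on $\{a,b\}$ and $\mathcal{M}^2$ with $b$ a loop and $\{a\}$ as its only base. Then $\{a,b\}$ is union-independent, but no decomposition has $\{a\}$ as its first part.

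\textbf{A small fix in part 2.} The set $P-b_{SP}$ is itself a base of $\mathcal{M}^*_{SP}\circ P=(\mathcal{M}_{SP}\times P)^*$, not merely a superset of one. This exactness is what makes the union contain $P$, so ``contains a base'' should read ``is a base.'' The union is a base of $\mathcal{N}$ because its size, $r(\mathcal{M}_{SP})+r(\mathcal{M}^*_{SP}\circ P)$, is the largest possible; say so explicitly.
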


\begin{proof}
\begin{enumerate}
\item 
Let $b_S$ be a base of $\mathcal{M}_{SP}\leftrightarrow_{\mathcal M} \mathcal{M}_P.$ Then there exist
 independent sets $b^1_P, b^2_P$ of $\mathcal{M}_{SP}\circ P, \mathcal{M}_P$ respectively
such that $b^1_P\uplus b^2_P$ is a base of $(\mathcal{M}_{SP}\vee\mathcal{M}_P)\circ P,$
 $b_S\uplus (b^1_P\uplus b^2_P)$ is a base of $\mathcal{M}_{SP} \vee \mathcal{M}_P$
and $b_S\uplus b^1_P$ is a base of $\mathcal{M}_{SP}.$
Hence $b_S$ is contained in a base of $\mathcal{M}_{SP}\circ S$
and contains a base of $\mathcal{M}_{SP}\times S.$

We next show that every base of $\mathcal{M}_{SP}\circ S$ contains a
base of $\mathcal{M}_{SP}\leftrightarrow_{\mathcal M} \mathcal{M}_P$ and every base of $\mathcal{M}_{SP}\times S$ is contained in a base of $\mathcal{M}_{SP}\leftrightarrow_{\mathcal M} \mathcal{M}_P.$
Let
 $b_S\uplus b^1_P$ be a base of $\mathcal{M}_{SP},$ $b^2_P$ an independent set of $\mathcal{M}_P,$
 such that $b^1_P\uplus b^2_P$ is a base of $(\mathcal{M}_{SP}\vee \mathcal{M}_P)\circ P.$
If $\hat{b}_S$ be a base of $\mathcal{M}_{SP}\circ S,$ then
$b^1_P\uplus \hat{b}_S $ contains a base $b^1_P\uplus b^1_S $
of $\mathcal{M}_{SP}.$ Now $b^1_P\uplus b^2_P\uplus b^1_S $ is a base of $\mathcal{M}_{SP}\vee \mathcal{M}_P,$ so that $\hat{b}_S$ contains the base
$b^1_S$ of
$(\mathcal{M}_{SP}\vee \mathcal{M}_P)\times S= \mathcal{M}_{SP}\leftrightarrow_{\mathcal M} \mathcal{M}_P.$

If $\tilde{b}_S$ be a base of $\mathcal{M}_{SP}\times S,$ then
 $\tilde{b}_S\uplus b^1_P$ is independent in $\mathcal{M}_{SP},$ so that it can be
 extended to a base of $b^1_P\uplus b^1_S $
of $\mathcal{M}_{SP}.$
Now $b^1_P\uplus b^2_P\uplus b^1_S $ is a base of $\mathcal{M}_{SP}\vee \mathcal{M}_P,$
 so that
$\tilde{b}_S$ is contained in
the base $b^1_S$ of
$(\mathcal{M}_{SP}\vee \mathcal{M}_P)\times S=\mathcal{M}_{SP}\leftrightarrow_{\mathcal M} \mathcal{M}_P.$

Thus $\mathcal{M}_{SP}\circ S\geq \mathcal{M}_{SP}\leftrightarrow_{\mathcal M} \mathcal{M}_P\geq \mathcal{M}_{SP}\times S.$

\item We will show that rank of the concerned matroids are the same.
So from part 1 above, the equality of the matroids follows.

Choose a base $b_S$ of $\mathcal{M}_{SP}\circ S$ and a base $b_P$ of $\mathcal{M}_{SP}\times P.$
 Then $P-b_P$ is a base of $\mathcal{M}_{SP}^*\circ P$ and $b_S\uplus b_P, P-b_P,$ are maximally distant bases of $\mathcal{M}_{SP},\mathcal{M}_{SP}^*\circ P,$ respectively which cover $P$ and $b_S$ is a base of
 $(\mathcal{M}_{SP}\vee (\mathcal{M}_{SP}^*\circ P))\times S=\mathcal{M}_{SP}\leftrightarrow_{\mathcal M} (\mathcal{M}_{SP}^*\circ P).$

Proof of the equality of the matroids $\mathcal{M}_{SP}\times S,\mathcal{M}_{SP}\leftrightarrow_{\mathcal M} (\mathcal{M}_{SP}^*\times P),$ is similar, working with bases $b'_S,b'_P, P-b'_P$ of matroids $\mathcal{M}_{SP}\times S, \mathcal{M}_{SP} \circ P, \mathcal{M}_{SP}^*\times P,$ respectively.
\end{enumerate}
\end{proof}

\begin{lemma}
\label{cor:minorspecial}
Let $\mathcal{M}_{SP}\circ P = \mathcal{M}^*_{PQ}\circ P$ and $\mathcal{M}_{SP}\times P = \mathcal{M}^*_{PQ}\times P.$
We then have the following:
\begin{enumerate}
 \item $(\mathcal{M}_{SP}\leftrightarrow_{\mathcal M} \mathcal{M}_{PQ})\circ S = \mathcal{M}_{SP}\circ S$ and $(\mathcal{M}_{SP}\leftrightarrow_{\mathcal M} \mathcal{M}_{PQ})\times S = \mathcal{M}_{SP}\times S.$
\item $\mathcal{M}^*_{SP}\circ P = \mathcal{M}_{PQ}\circ P,$ $\mathcal{M}^*_{SP}\times P = \mathcal{M}_{PQ}\times P$ and 
$(\mathcal{M}_{SP}\leftrightarrow_{\mathcal M} \mathcal{M}_{PQ})\circ Q = \mathcal{M}_{PQ}\circ Q,$ $(\mathcal{M}_{SP}\leftrightarrow_{\mathcal M} \mathcal{M}_{PQ})\times Q = \mathcal{M}_{PQ}\times Q.$
\end{enumerate}
\end{lemma}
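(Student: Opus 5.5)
Part 1 follows from restricted associativity (Lemma \ref{lem:restassoc}) together with Lemma \ref{lem:basecontainedcontains}, part 2. As in the proof of Theorem \ref{thm:lowerboundP}, we first write
\[
(\mathcal{M}_{SP}\leftrightarrow_{\mathcal M}\mathcal{M}_{PQ})\circ S=\mathcal{M}_{SP}\leftrightarrow_{\mathcal M}(\mathcal{M}_{PQ}\leftrightarrow_{\mathcal M}\mathcal{F}_Q)=\mathcal{M}_{SP}\leftrightarrow_{\mathcal M}(\mathcal{M}_{PQ}\circ P).
\]
The key observation is that the hypothesis $\mathcal{M}_{SP}\times P=\mathcal{M}^*_{PQ}\times P$ dualizes to $\mathcal{M}^*_{SP}\circ P=\mathcal{M}_{PQ}\circ P$, using $(\mathcal{M}\times T)^*=\mathcal{M}^*\circ T$. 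Substituting, the right side becomes $\mathcal{M}_{SP}\leftrightarrow_{\mathcal M}(\mathcal{M}^*_{SP}\circ P)$, which equals $\mathcal{M}_{SP}\circ S$ by Lemma \ref{lem:basecontainedcontains}, part 2.

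The contraction statement is handled symmetrically. We have
\[
(\mathcal{M}_{SP}\leftrightarrow_{\mathcal M}\mathcal{M}_{PQ})\times S=\mathcal{M}_{SP}\leftrightarrow_{\mathcal M}(\mathcal{M}_{PQ}\leftrightarrow_{\mathcal M}\mathbf{0}_Q)=\mathcal{M}_{SP}\leftrightarrow_{\mathcal M}(\mathcal{M}_{PQ}\times P).
\]
The hypothesis $\mathcal{M}_{SP}\circ P=\mathcal{M}^*_{PQ}\circ P$ dualizes to $\mathcal{M}^*_{SP}\times P=\mathcal{M}_{PQ}\times P$, so the expression equals $\mathcal{M}_{SP}\leftrightarrow_{\mathcal M}(\mathcal{M}^*_{SP}\times P)=\mathcal{M}_{SP}\times S$, again by Lemma \ref{lem:basecontainedcontains}, part 2.

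For part 2, the first two identities are exactly the dualized hypotheses already obtained above. They give the hypotheses of the lemma with the roles of $\mathcal{M}_{SP}$ and $\mathcal{M}_{PQ}$ interchanged: $\mathcal{M}_{PQ}\circ P=\mathcal{M}^*_{SP}\circ P$ and $\mathcal{M}_{PQ}\times P=\mathcal{M}^*_{SP}\times P$. Since the union $\mathcal{M}_{SP}\vee\mathcal{M}_{PQ}$, and hence the linking, is symmetric in its two arguments, we may apply part 1 with $Q$ in place of $S$. This yields the $Q$-restriction and $Q$-contraction identities.

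The only care needed is the use of restricted associativity, which is legitimate because each index set occurs at most twice. No step looks like a real obstacle; the main point is noticing that the dual forms of the hypotheses are exactly what Lemma \ref{lem:basecontainedcontains}, part 2 requires.
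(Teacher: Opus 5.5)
Your proof is correct and follows essentially the same route as the paper. Both arguments use restricted associativity to reduce the two minors to $\mathcal{M}_{SP}\leftrightarrow_{\mathcal M}(\mathcal{M}_{PQ}\circ P)$ and $\mathcal{M}_{SP}\leftrightarrow_{\mathcal M}(\mathcal{M}_{PQ}\times P)$, substitute the dualized hypotheses, and conclude with Lemma \ref{lem:basecontainedcontains} part 2, obtaining the $Q$-identities by interchanging the roles of the two matroids. The only difference is ordering: you derive $\mathcal{M}^*_{SP}\circ P=\mathcal{M}_{PQ}\circ P$ and $\mathcal{M}^*_{SP}\times P=\mathcal{M}_{PQ}\times P$ before using them, whereas the paper uses them in part 1 and justifies them only in part 2.
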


\begin{proof}
\begin{enumerate}
\item  We have:
$(\mathcal{M}_{SP}\leftrightarrow_{\mathcal M} \mathcal{M}_{PQ})\circ S = (\mathcal{M}_{SP}\leftrightarrow_{\mathcal M} \mathcal{M}_{PQ})\leftrightarrow_{\mathcal M} \mathcal{F}_Q = \mathcal{M}_{SP}\leftrightarrow_{\mathcal M} (\mathcal{M}_{PQ}\leftrightarrow_{\mathcal M} \mathcal{F}_Q) = \mathcal{M}_{SP}\leftrightarrow_{\mathcal M} (\mathcal{M}_{PQ}\circ P) = \mathcal{M}_{SP}\leftrightarrow_{\mathcal M} (\mathcal{M}^*_{SP}\circ P) = \mathcal{M}_{SP}\circ S,$ using Lemma~\ref{lem:basecontainedcontains}.

We have:
$(\mathcal{M}_{SP}\leftrightarrow_{\mathcal M} \mathcal{M}_{PQ})\times S = (\mathcal{M}_{SP}\leftrightarrow_{\mathcal M} \mathcal{M}_{PQ})\leftrightarrow_{\mathcal M} {\mathbf 0}_Q = \mathcal{M}_{SP}\leftrightarrow_{\mathcal M} (\mathcal{M}_{PQ}\leftrightarrow_{\mathcal M} {\mathbf 0}_Q) = \mathcal{M}_{SP}\leftrightarrow_{\mathcal M} (\mathcal{M}_{PQ}\times P) = \mathcal{M}_{SP}\leftrightarrow_{\mathcal M} (\mathcal{M}^*_{SP}\times P) = \mathcal{M}_{SP}\times S,$ using Lemma~\ref{lem:basecontainedcontains}.

\item  We have $\mathcal{M}_{SP}^*\circ P = (\mathcal{M}_{SP}\times P)^* = (\mathcal{M}^*_{PQ}\times P)^* = \mathcal{M}_{PQ}\circ P,$ and $\mathcal{M}_{SP}^*\times P = (\mathcal{M}_{SP}\circ P)^* = (\mathcal{M}^*_{PQ}\circ P)^* = \mathcal{M}_{PQ}\times P.$
The proof of $(\mathcal{M}_{SP}\leftrightarrow_{\mathcal M} \mathcal{M}_{PQ})\circ Q = \mathcal{M}_{PQ}\circ Q$ and $(\mathcal{M}_{SP}\leftrightarrow_{\mathcal M} \mathcal{M}_{PQ})\times Q = \mathcal{M}_{PQ}\times Q$ is by replacing $S$ by $Q$ and $\mathcal{M}_{SP}$ by $\mathcal{M}_{PQ}$ in the proof of part 1 above.
\end{enumerate}
\end{proof}
A simple example of a matroid where $\mathcal{M}_{SP}\circ P= \mathcal{M}^*_{PQ}\circ P, \mathcal{M}_{SP}\times P= \mathcal{M}^*_{PQ}\times P,$ is when $P$ is contained in both a base and a cobase of each of the matroids $\mathcal{M}_{SP}, \mathcal{M}_{PQ}.$ In such a case $\mathcal{M}_{SP}\circ P= \mathcal{M}^*_{PQ}\circ P= \mathcal{F}_P, \mathcal{M}_{SP}\times P= \mathcal{M}^*_{PQ}\times P= {\mathbf 0}_P$ and therefore by Lemma \ref{cor:minorspecial}, $\mathcal{M}_{SP}\leftrightarrow_{\mathcal M} \mathcal{M}_{PQ}$ would have the same restriction and contraction on $S$ ($Q$) as $\mathcal{M}_{SP}$ ($\mathcal{M}_{PQ}$).
\begin{definition}
We say matroids ${\mathcal{M}}_{SP},{\mathcal{M}}_{PQ}$ are \textbf{compatible} over $P$ iff $\mathcal{M}_{SP}\circ P= \mathcal{M}_{PQ}^*\circ P$ and $\mathcal{M}_{SP}\times P= \mathcal{M}_{PQ}^*\times P,$ or equivalently using duality, $\mathcal{M}^*_{SP}\circ P= \mathcal{M}_{PQ}\circ P$ and $\mathcal{M}_{SP}^*\times P= \mathcal{M}_{PQ}\times P.$
\end{definition}

The following lemma states some useful properties of compatible matroids.

\begin{lemma}
\label{lem:baseexchange}
Let $\mathcal{M}_{SP}, \mathcal{M}_{PQ}$ be compatible matroids over $P.$
 We have the following.
\begin{enumerate}
\item Let $b_S\uplus b^1_P, b_Q\uplus b^2_P$ be independent sets of $\mathcal{M}_{SP}, \mathcal{M}_{PQ},$ respectively such that their union is a base of $\mathcal{M}_{SP}\vee \mathcal{M}_{PQ},$ covering $P.$
Then $b_S\uplus b^1_P, b_Q\uplus b^2_P$ are bases of $\mathcal{M}_{SP}, \mathcal{M}_{PQ},$ respectively which cover $P$ and contain disjoint bases of $\mathcal{M}_{SP}\times P, \mathcal{M}_{PQ}\times P,$ respectively.
\item If $b_S\uplus b_Q$ is a base of $\mathcal{M}_{SP}\leftrightarrow_{\mathcal M} \mathcal{M}_{PQ},$
 then there exist disjoint bases $b_S\uplus \tilde{b}^1_P, b_Q\uplus \tilde{b}^2_P$ of $\mathcal{M}_{SP},\mathcal{M}_{PQ}$ which cover $P$ and whose union is a base of $\mathcal{M}_{SP}\vee \mathcal{M}_{PQ}.$
\end{enumerate}
\end{lemma}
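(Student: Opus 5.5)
The plan is to prove part 1 by a rank count, and then obtain part 2 from part 1 through the contraction description of bases of $\mathcal{M}_{SP}\leftrightarrow_{\mathcal M}\mathcal{M}_{PQ}$.

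\emph{Step 1: rank of the union.} Let $r_1,r_2$ be the rank functions of $\mathcal{M}_{SP},\mathcal{M}_{PQ}$. Any base of $\mathcal{M}_{SP}\vee\mathcal{M}_{PQ}$ covering $P$ has the form $b_S\uplus b^1_P\uplus b_Q\uplus b^2_P$ with $b^1_P\uplus b^2_P=P$, the two parts disjoint. Hence its size is at most
\[
r(\mathcal{M}_{SP}\times S)+r(\mathcal{M}_{SP}\circ P)+r(\mathcal{M}_{PQ}\times Q)+r(\mathcal{M}_{PQ}\circ P)
\]
in the form $r_1(S\uplus P)+r_2(P\uplus Q)$. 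It is also at most $|P|+r(\mathcal{M}_{SP}\times S)+r(\mathcal{M}_{PQ}\times Q)$.

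Compatibility gives $\mathcal{M}_{PQ}\circ P=\mathcal{M}^*_{SP}\circ P=(\mathcal{M}_{SP}\times P)^*$. Therefore
\[
r(\mathcal{M}_{PQ}\circ P)=|P|-r(\mathcal{M}_{SP}\times P).
\]

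Now I exhibit a covering union attaining $r_1(S\uplus P)+r_2(P\uplus Q)$. Take a base $c_P$ of $\mathcal{M}_{SP}\times P$. Extend it to a base $b_S\uplus b^1_P$ of $\mathcal{M}_{SP}$ with $b_S$ a base of $\mathcal{M}_{SP}\circ S$ and $b^1_P\supseteq c_P$. Note that $P-b^1_P\subseteq P-c_P$, and $P-c_P$ is a base of $\mathcal{M}_{SP}^*\circ P=\mathcal{M}_{PQ}\circ P$, so $P-b^1_P$ is independent in $\mathcal{M}_{PQ}$. I would instead choose $b^2_P:=P-c_P$. Then:
\begin{itemize}
\item $b^2_P$ is a base of $\mathcal{M}_{PQ}\circ P$;
\item $b^1_P\cup b^2_P=P$;
\item $b^2_P$ extends to a base $b^2_P\uplus b_Q$ of $\mathcal{M}_{PQ}$.
\end{itemize}
The union has size $r_1+r_2$ minus the overlap $|b^1_P\cap b^2_P|$. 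To make the overlap zero, take $b^1_P=c_P$ exactly. This works when $r(\mathcal{M}_{SP}\circ S)$ plus $|c_P|$ equals $r_1$, which is the identity $r(\mathcal{M}_{SP})=r(\mathcal{M}_{SP}\circ S)+r(\mathcal{M}_{SP}\times P)$. So the disjoint bases $b_S\uplus c_P$ and $b_Q\uplus(P-c_P)$ of $\mathcal{M}_{SP},\mathcal{M}_{PQ}$ cover $P$, and their union has size $r_1(S\uplus P)+r_2(P\uplus Q)$. Since no union can exceed this, the rank of $\mathcal{M}_{SP}\vee\mathcal{M}_{PQ}$ equals $r_1+r_2$.

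\emph{Step 2: part 1.} Given $b_S\uplus b^1_P$ and $b_Q\uplus b^2_P$ independent with union a base covering $P$, the size equality forces three things:
\begin{itemize}
\item both sets are bases;
\item $b^1_P\cap b^2_P=\emptyset$, because the union has size equal to the sum of the sizes;
\item each part attains its bound.
\end{itemize}
In particular $|b^1_P|=|P|-|b^2_P|\ge |P|-r(\mathcal{M}_{PQ}\circ P)=r(\mathcal{M}_{SP}\times P)$. Since $b^2_P$ is independent in $\mathcal{M}_{PQ}\circ P=(\mathcal{M}_{SP}\times P)^*$, its complement $b^1_P$ is spanning in $\mathcal{M}_{SP}\times P$, so it contains a base of $\mathcal{M}_{SP}\times P$. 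Symmetrically, $b^2_P$ contains a base of $\mathcal{M}_{PQ}\times P$. These two bases lie inside the disjoint sets $b^1_P,b^2_P$, hence are disjoint.

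\emph{Step 3: part 2.} A base $b_S\uplus b_Q$ of the contraction $(\mathcal{M}_{SP}\vee\mathcal{M}_{PQ})\times(S\uplus Q)$ is obtained as $B-P$ for some base $B$ of the union containing a base of the union restricted to $P$. By Step 1 that restriction is spanned by $P$ itself, since $P$ is covered by $c_P$ and $P-c_P$. So $P\subseteq B$, and $B$ is the union of some bases $b_S\uplus\tilde b^1_P$ and $b_Q\uplus\tilde b^2_P$ covering $P$. Applying part 1 gives that they are bases and disjoint.

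The main obstacle is Step 1's claim that every base of the union can be written with both pieces independent and with the $S$- and $Q$-parts exactly $b_S,b_Q$. For this I would use the standard fact that bases of a union decompose as $I_1\uplus I_2$ with $I_1,I_2$ independent. Since $\mathcal{M}_{SP}$ lives on $S\uplus P$, the $S$-part must come from $I_1$ and the $Q$-part from $I_2$.
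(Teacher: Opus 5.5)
Your proof is correct and follows the paper's route. The disjoint pair $b_S\uplus c_P$, $b_Q\uplus(P-c_P)$, built from bases of $\mathcal{M}_{SP}\circ S$, $\mathcal{M}_{SP}\times P$ and $\mathcal{M}_{PQ}\circ P$, fixes the rank of $\mathcal{M}_{SP}\vee\mathcal{M}_{PQ}$ at $r(\mathcal{M}_{SP})+r(\mathcal{M}_{PQ})$ and makes $P$ independent in the union; the cardinality count then forces both pieces to be disjoint bases, and part 2 is read off the contraction. The one correction is your side remark that a covering base has size at most $|P|+r(\mathcal{M}_{SP}\times S)+r(\mathcal{M}_{PQ}\times Q)$, which is false: take $S=\{s\}$, $P=\{p\}$, $Q=\{q\}$ with $\mathcal{M}_{SP}$ and $\mathcal{M}_{PQ}$ both equal to $U_{1,2}$, a compatible pair in which $\{s,p\}$ is a covering base of size $2$. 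Nothing depends on that remark, so simply delete it.
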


\begin{proof}
\begin{enumerate}
\item  Choose bases $b'_S,b'_Q$ of $\mathcal{M}_{SP}\circ S,\mathcal{M}_{PQ}\times Q$ and a base $b'_P$ of $\mathcal{M}_{SP}\times P.$
Then $P-b'_P$ is a base of $(\mathcal{M}_{SP}\times P)^*= (\mathcal{M}_{PQ}^*\times P)^*= \mathcal{M}_{PQ}\circ P$ and
 $b'_S\uplus b'_P, b'_Q\uplus (P-b'_P)$ are disjoint bases of $\mathcal{M}_{SP},\mathcal{M}_{PQ},$ respectively, which cover $P$ and are such that their union is a base of $\mathcal{M}_{SP}\vee \mathcal{M}_{PQ}.$

 Since all bases of $\mathcal{M}_{SP}\vee \mathcal{M}_{PQ}$ have the same cardinality, it follows that if $b_S\uplus b^1_P, b_Q\uplus b^2_P$ be independent sets of $\mathcal{M}_{SP}, \mathcal{M}_{PQ},$ respectively such that their union is a base of $\mathcal{M}_{SP}\vee \mathcal{M}_{PQ},$ covering $P,$ then they are bases of $\mathcal{M}_{SP}, \mathcal{M}_{PQ},$ respectively which cover $P.$ Since every base of $\mathcal{M}_{SP}, \mathcal{M}_{PQ}$ contains bases of $\mathcal{M}_{SP}\times P, \mathcal{M}_{PQ}\times P,$ respectively and $b_S\uplus b^1_P, b_Q\uplus b^2_P$ are disjoint, the result follows.

\item  Let $b^1_P$ and $b^2_P \equiv P-b^1_P$ be bases of $\mathcal{M}_{SP}\circ P$ and $\mathcal{M}_{PQ}\times P = \mathcal{M}_{SP}^*\times P,$ respectively.
Let $b^1_S,b^2_Q$ be bases of $\mathcal{M}_{SP}\times S, \mathcal{M}_{PQ} \circ Q,$ respectively.
   Then $b^1_S\uplus b^1_P, b^2_Q\uplus b^2_P$ are bases of $\mathcal{M}_{SP},\mathcal{M}_{PQ},$ respectively and $b^1_S\uplus b^1_P\uplus b^2_Q\uplus b^2_P$ is a base of $\mathcal{M}_{SP}\vee \mathcal{M}_{PQ}$ which is a union of disjoint bases of $\mathcal{M}_{SP},\mathcal{M}_{PQ},$ which also cover $P.$
Therefore, $P$ is a base of $(\mathcal{M}_{SP}\vee \mathcal{M}_{PQ})\circ P.$
  Now $b_S\uplus b_Q$ is a base of $(\mathcal{M}_{SP}\vee \mathcal{M}_{PQ})\times (S\uplus Q).$
 Therefore $b_S\uplus b_Q\uplus P$ is a base of $\mathcal{M}_{SP}\vee \mathcal{M}_{PQ}.$
Therefore there exist disjoint bases $b_S\uplus \tilde{b}^1_P, b_Q\uplus \tilde{b}^2_P$ of   $\mathcal{M}_{SP},\mathcal{M}_{PQ},$ respectively which cover $P.$
\end{enumerate}
\end{proof}

\begin{definition}
 Let $\mathcal{M}_{SP},\mathcal{M}_{PQ}$ be matroids compatible over $P.$ We say they are \textbf{strongly compatible} over $P$ iff the following additional condition is also satisfied:
Let ${b}^3_P, {b}^4_P$ be disjoint bases of $\mathcal{M}_{SP}\times P, \mathcal{M}_{PQ}\times P,$ respectively. Then, whenever $b_S\uplus b_Q$ is a base of $\mathcal{M}_{SP}\leftrightarrow_{\mathcal M} \mathcal{M}_{PQ},$ there exist disjoint bases $b_S\uplus {b}^1_P, b_Q\uplus {b}^2_P$ of $\mathcal{M}_{SP},\mathcal{M}_{PQ},$ respectively, where $b_S\uplus {b}^1_P\supseteq b^3_P$ and $b_Q\uplus {b}^2_P\supseteq b^4_P.$
\end{definition}

In the next section we show that for bipartition complete matroids, under a simple additional condition, compatibility implies strong compatibility (Theorem \ref{thm:compatible}).

The following result
 is analogous to a result on vector spaces (Exercise 8.11 of \cite{HNarayanan1997}) and is important for the subsequent developments in this paper.

\begin{theorem}
\label{thm:matroidminP}
Let $\mathcal{M}_{SP}, \mathcal{M}_{PQ}$ be matroids strongly compatible over $P$ and let $\mathcal{M}_{SQ} \equiv \mathcal{M}_{SP}\leftrightarrow_{\mathcal M} \mathcal{M}_{PQ}.$

Let $b^3_P, b^4_P$ be disjoint bases of $\mathcal{M}_{SP}\times P, \mathcal{M}_{PQ}\times P,$ respectively.
Let $\hat{P} \equiv P - (b^3_P \uplus b^4_P),$ and let $\mathcal{M}_{S\hat{P}} \equiv \mathcal{M}_{SP}\times (S\uplus (P-b^3_P))\circ (S\uplus \hat{P}),$ $\mathcal{M}_{\hat{P}Q} \equiv \mathcal{M}_{PQ}\times (Q\uplus (P-b^4_P))\circ (Q\uplus \hat{P}).$

We have the following.
\begin{enumerate}
\item There exist disjoint bases $b_S\uplus b^1_P, b_Q\uplus b^2_P$  of $\mathcal{M}_{SP}, \mathcal{M}_{PQ},$ respectively which contain $b^3_P, b^4_P,$ respectively  and which cover $P.$
The subsets $b_S\uplus (b^1_P-b^3_P),$ $b_Q\uplus (b^2_P-b^4_P),$ are disjoint bases of $\mathcal{M}_{S\hat{P}} \equiv \mathcal{M}_{SP}\times (S\uplus (P-b^3_P)) \circ (S\uplus ((P-b^3_P)-b^4_P))$ , $\mathcal{M}_{\hat{P}Q} \equiv \mathcal{M}_{PQ}\times (Q\uplus (P-b^4_P)) \circ (Q\uplus ((P-b^3_P)-b^4_P)),$ respectively. Further these bases cover $\hat{P} \equiv (P-b^3_P)-b^4_P.$

\item $\mathcal{M}_{S\hat{P}}\circ \hat{P} = \mathcal{M}_{\hat{P}Q}^*\circ \hat{P}$ and $\mathcal{M}_{S\hat{P}}\times \hat{P} = \mathcal{M}_{\hat{P}Q}^*\times \hat{P}.$
\item $\mathcal{M}_{SQ} = \mathcal{M}_{S\hat{P}}\leftrightarrow_{\mathcal M} \mathcal{M}_{\hat{P}Q},$
\item There exists a base and a cobase of $\mathcal{M}_{S\hat{P}}$ (respectively, $\mathcal{M}_{\hat{P}Q}$) that contains $\hat{P}.$
\item $|\hat{P}|=r(\mathcal{M}_{SP}\circ S)- r(\mathcal{M}_{SP}\times S)=
r(\mathcal{M}_{PQ}\circ Q)- r(\mathcal{M}_{PQ}\times Q)= 
\lambda_{SQ}(\mathcal{M}_{{S}Q}).$
Hence $|\hat{P}|$ is the minimum of $|P|$ when $\mathcal{M}_{SQ} = \mathcal{M}_{SP}\leftrightarrow_{\mathcal M} \mathcal{M}_{PQ}.$

\end{enumerate}
\end{theorem}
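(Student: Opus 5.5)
Part 1 comes first. Since $\mathcal{M}_{SQ}$ is nonempty, take any base $b_S\uplus b_Q$ of it. Strong compatibility then gives disjoint bases $b_S\uplus b^1_P\supseteq b^3_P$ and $b_Q\uplus b^2_P\supseteq b^4_P$ of $\mathcal{M}_{SP},\mathcal{M}_{PQ}$. By Lemma~\ref{lem:baseexchange}(2) and the proof of Lemma~\ref{lem:baseexchange}(1), $P$ is a base of $(\mathcal{M}_{SP}\vee\mathcal{M}_{PQ})\circ P$. By a cardinality count (the union is a base of $\mathcal{M}_{SP}\vee\mathcal{M}_{PQ}$ containing $P$), these two bases cover $P$.

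Next I would identify the bases of the minors. Because $b^3_P$ is a base of $\mathcal{M}_{SP}\times P$, it is independent and spanning within $P$ after contracting $S$. I will use the standard fact that for an independent set $I$, the set $X\subseteq E-I$ is a base of $\mathcal{M}\times(E-I)$ iff $X\uplus I$ is a base of $\mathcal{M}$. This shows $b_S\uplus(b^1_P-b^3_P)$ is a base of $\mathcal{M}_{SP}\times(S\uplus(P-b^3_P))$. It avoids $b^4_P$, so it is also a base of the restriction to $S\uplus\hat P$, since restriction to a set containing a base preserves it. The same argument applies on the $Q$ side, and the covering of $\hat P$ is inherited. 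The same reasoning with arbitrary bases gives the general correspondence: the bases of $\mathcal{M}_{S\hat P}$ that are contained in $S\uplus\hat P$ are exactly the sets $X$ such that $X\uplus b^3_P$ is a base of $\mathcal{M}_{SP}$ and $X\cap b^4_P=\emptyset$. A rank computation is needed here: I must check that $r(\mathcal{M}_{SP}\times(S\uplus(P-b^3_P))\circ(S\uplus\hat P))$ equals $r(\mathcal{M}_{SP})-|b^3_P|$, and the base just exhibited shows this.

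Part 4 follows quickly. The base $b_S\uplus(b^1_P-b^3_P)$ contains $\hat P\cap b^1_P$, and its complement contains $\hat P\cap b^2_P$. To obtain a base containing all of $\hat P$, I would apply the same construction to the compatibility data: take a base $b'_P\supseteq b^3_P$ of $\mathcal{M}_{SP}\circ P$ whose complement $P-b'_P\supseteq b^4_P$ is a base of $\mathcal{M}_{PQ}\times P$. This holds because $P-b^4_P$ is a base of $\mathcal{M}^*_{PQ}\circ P=\mathcal{M}_{SP}\circ P$ containing $b^3_P$, which forces $b'_P=P-b^4_P\supseteq\hat P$. Then $\hat P=b'_P-b^3_P$ is independent in $\mathcal{M}_{S\hat P}$. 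Dually, $P-b^3_P$ is a base of $\mathcal{M}_{PQ}\circ P$, and since $b^3_P$ is independent in $\mathcal{M}_{SP}\times P$, we get that $\mathcal{M}_{S\hat P}\times\hat P$ has rank $0$. This gives a cobase containing $\hat P$. Part 2 then follows because both $\mathcal{M}_{S\hat P}\circ\hat P$ and $\mathcal{M}^*_{\hat PQ}\circ\hat P$ equal $\mathcal{F}_{\hat P}$, and both contractions equal $\mathbf 0_{\hat P}$, by the symmetric argument.

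Part 3 is the main step. I would show both matroids have the same bases. Suppose $b_S\uplus b_Q$ is a base of $\mathcal{M}_{SQ}$. By Part 1 applied to it, Lemma~\ref{lem:baseexchange} and Part 2, it is the $S\uplus Q$ part of disjoint bases of $\mathcal{M}_{S\hat P},\mathcal{M}_{\hat PQ}$ covering $\hat P$. Since $\hat P$ is spanned by such unions, it is a base of the union restricted to $\hat P$, so $b_S\uplus b_Q$ is a base of $\mathcal{M}_{S\hat P}\leftrightarrow_{\mathcal M}\mathcal{M}_{\hat PQ}$. Conversely, suppose $b_S\uplus b_Q$ is a base of the new linking. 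By Lemma~\ref{lem:baseexchange}(2), applied to the pair, which is compatible by Part 2, there are disjoint covering bases $b_S\uplus c^1,\ b_Q\uplus c^2$ of $\mathcal{M}_{S\hat P}$ and $\mathcal{M}_{\hat PQ}$. Adding $b^3_P$ and $b^4_P$ gives disjoint bases of $\mathcal{M}_{SP},\mathcal{M}_{PQ}$ covering $P$. By the first paragraph, their union is a base of $\mathcal{M}_{SP}\vee\mathcal{M}_{PQ}$ containing the base $P$ of its restriction, hence $b_S\uplus b_Q$ is a base of the contraction. The delicate point, which I expect to be the main obstacle, is the base correspondence in the second paragraph. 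In particular, I must make sure that a base of $\mathcal{M}_{S\hat P}$ plus $b^3_P$ is really a base of $\mathcal{M}_{SP}$ and not merely independent.

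Part 5 is then a computation. By Part 4, $|\hat P|=r(\mathcal{M}_{S\hat P}\circ S)-r(\mathcal{M}_{S\hat P}\times S)$. By Lemma~\ref{cor:minorspecial}, applied to both decompositions, this equals $\lambda_{SQ}(\mathcal{M}_{SQ})$ and also equals $r(\mathcal{M}_{SP}\circ S)-r(\mathcal{M}_{SP}\times S)$. Minimality then follows from Theorem~\ref{thm:lowerboundP}.
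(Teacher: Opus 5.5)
Your proof is correct. For Parts 1, 3 and 5 it is the paper's argument:
\begin{itemize}
\item Part 1 uses strong compatibility together with the count $|b^1_P|+|b^2_P|=|P|$, which comes from $P$ being a base of $(\mathcal{M}_{SP}\vee\mathcal{M}_{PQ})\circ P$.
\item Part 3 applies Lemma~\ref{lem:baseexchange} in both directions.
\item Part 5 combines Lemma~\ref{cor:minorspecial} with Theorem~\ref{thm:lowerboundP}.
\end{itemize}

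You depart from the paper in Parts 2 and 4. The paper proves Part 2 first, by a chain of minor identities that uses only the compatibility equalities $\mathcal{M}_{SP}\circ P=\mathcal{M}^*_{PQ}\circ P$ and $\mathcal{M}_{SP}\times P=\mathcal{M}^*_{PQ}\times P$. That chain would hold for any disjoint $b^3_P,b^4_P\subseteq P$. Only after Part 3 does the paper get Part 4, using Lemma~\ref{cor:minorspecial} to identify $\mathcal{M}_{S\hat P}\circ S$ and $\mathcal{M}_{S\hat P}\times S$ with the corresponding minors of $\mathcal{M}_{SP}$.

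You instead prove Part 4 directly from two facts: $P-b^4_P$ is a base of $\mathcal{M}_{SP}\circ P$ containing $b^3_P$, and $b^3_P$ spans $\mathcal{M}_{SP}\times P$. You then read off Part 2, since both restrictions to $\hat P$ equal $\mathcal{F}_{\hat P}$ and both contractions equal $\mathbf{0}_{\hat P}$. This is shorter and avoids Lemma~\ref{cor:minorspecial} until Part 5. The cost is that it relies on $b^3_P,b^4_P$ being bases of the contractions, which the paper's Part 2 does not need.

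Your explicit rank check is also a gain. It shows that the bases of $\mathcal{M}_{S\hat P}$ are exactly the sets $X\subseteq S\uplus\hat P$ with $X\uplus b^3_P$ a base of $\mathcal{M}_{SP}$. This supplies a step that the paper's converse in Part 3 (and its Part 4) uses without comment.

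One wording slip: $\mathcal{M}_{S\hat P}\times\hat P$ has rank $0$ because $b^3_P$ \emph{spans} $\mathcal{M}_{SP}\times P$, being a base of it. Its independence there is not enough. The fact that $P-b^3_P$ is a base of $\mathcal{M}_{PQ}\circ P$ is what you need for the symmetric $\mathcal{M}_{\hat PQ}$ side, not for this claim.
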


\begin{proof}
\begin{enumerate}
\item  
%We note that if we contract a subset $b^1$ of a base $b$ of a matroid $\mathcal{M}_E,$ then $b-b^1$ is a base of $\mathcal{M}_E\times (E-b^1)$ and if we delete a subset $A$ of $E-b$ then $b$ remains a base of $\mathcal{M}_E\circ (E-A).$
 %Since $\mathcal{M}_{SP},\mathcal{M}_{PQ},$ are compatible over $P,$ by Lemma \ref{lem:baseexchange} (2), there exist disjoint bases $(b_S\uplus b^1_P), (b_Q\uplus b^2_P)$ of $\mathcal{M}_{SP},\mathcal{M}_{PQ},$ respectively which cover $P.$
%Now, by strong compatibility  of $\mathcal{M}_{SP},\mathcal{M}_{PQ}$ over $P,$ we may take the subsets $  b^3_P, b^4_P$ to be contained in  $(b_S\uplus b^1_P), (b_Q\uplus b^2_P)$ respectively and $b^4_P, b^3_P$ to be contained in the cobases $(S\uplus P)- (b_S\uplus b^1_P), (P\uplus Q) -(b_Q\uplus b^2_P)$ of $\mathcal{M}_{SP},\mathcal{M}_{PQ},$ respectively.
%
%====================================================

Since $\mathcal{M}_{SP},\mathcal{M}_{PQ},$ are compatible over $P,$ by Lemma \ref{lem:baseexchange} (2), there exist disjoint bases $(b_S\uplus b^1_P), (b_Q\uplus b^2_P)$ of $\mathcal{M}_{SP},\mathcal{M}_{PQ},$ respectively which cover $P,$
 i.e., $(b_S\uplus b^1_P)\uplus (b_Q\uplus b^2_P)$ is a base of $\mathcal{M}_{SP}\vee\mathcal{M}_{PQ},$ and $(\mathcal{M}_{SP}\vee\mathcal{M}_{PQ})\circ P= P.$ 

Therefore, whenever $b_S\uplus b_Q$ is a base of $\mathcal{M}_{SP}\leftrightarrow_{\mathcal M} \mathcal{M}_{PQ}:\equiv (\mathcal{M}_{SP}\vee\mathcal{M}_{PQ})\times (S\uplus Q),$  and  $(b_S\uplus b^1_P), (b_Q\uplus b^2_P)$ are disjoint bases of $\mathcal{M}_{SP},\mathcal{M}_{PQ},$ respectively, we must have  $b^1_P\uplus b^2_P=P.$ 
Now, by strong compatibility  of $\mathcal{M}_{SP},\mathcal{M}_{PQ}$ over $P,$ we may take the subsets $  b^3_P, b^4_P$ to be contained in  $(b_S\uplus b^1_P), (b_Q\uplus b^2_P)$ respectively and $b^4_P, b^3_P$ to be contained in the cobases $(S\uplus P)- (b_S\uplus b^1_P), (P\uplus Q) -(b_Q\uplus b^2_P)$ of $\mathcal{M}_{SP},\mathcal{M}_{PQ},$ respectively.

%
%
%=================================================

We note that if we contract a subset $b^1$ of a base $b$ of a matroid $\mathcal{M}_E,$ then $b-b^1$ is a base of $\mathcal{M}_E\times (E-b^1)$ and if we delete a subset $A$ of $E-b$ then $b$ remains a base of $\mathcal{M}_E\circ (E-A).$

 The result now follows from the definition of $\mathcal{M}_{S\hat{P}}$ and $\mathcal{M}_{\hat{P}Q}.$

\item  We have:
\begin{align*}
\mathcal{M}_{S\hat{P}}\circ \hat{P} &= [\mathcal{M}_{SP}\times (S\uplus (P- b^3_P))\circ (S\uplus ((P- b^3_P)-b^4_P))]\circ ((P- b^3_P)-b^4_P) \\
&= \mathcal{M}_{SP}\times (S\uplus (P- b^3_P))\circ((P- b^3_P)-b^4_P) = \mathcal{M}_{SP}\circ (P-b^4_P)\times((P- b^3_P)-b^4_P) \\
&= \mathcal{M}_{SP}\circ P\circ (P-b^4_P)\times((P- b^3_P)-b^4_P) = \mathcal{M}^*_{PQ}\circ P\circ (P-b^4_P)\times((P- b^3_P)-b^4_P) \\
&= \mathcal{M}^*_{PQ}\circ (P-b^4_P)\times((P- b^3_P)-b^4_P) = [\mathcal{M}_{PQ}\times (P-b^4_P)\circ((P- b^3_P)-b^4_P)]^* \\
&= [\mathcal{M}_{PQ}\circ (Q\uplus(P- b^3_P)) \times ((P- b^3_P)-b^4_P)]^* \\
&= [\mathcal{M}_{PQ}\circ (Q\uplus(P- b^3_P)) \times (Q\uplus ((P- b^3_P)-b^4_P))\times ((P- b^3_P)-b^4_P)]^* \\
&= [\mathcal{M}_{PQ}\times (Q\uplus(P- b^4_P)) \circ (Q\uplus ((P- b^3_P)-b^4_P))\times ((P- b^3_P)-b^4_P)]^* \\
&= [\mathcal{M}_{\hat{P}Q}\times ((P- b^3_P)-b^4_P)]^* = \mathcal{M}^*_{\hat{P}Q}\circ ((P- b^3_P)-b^4_P) = \mathcal{M}^*_{\hat{P}Q}\circ \hat{P}.
\end{align*}
The proof of the equality of $\mathcal{M}_{S\hat{P}}\times \hat{P}$ and $\mathcal{M}^*_{\hat{P}Q}\times \hat{P}$ is similar.

\item  Let $b_S\uplus b_Q$ be a base of $\mathcal{M}_{SP}\leftrightarrow_{\mathcal M} \mathcal{M}_{PQ}.$ 
Then by strong compatibility of $\mathcal{M}_{SP}, \mathcal{M}_{PQ}$ over $P,$
there exist disjoint bases $b_S\uplus b^1_{P}, b_Q\uplus b^2_{P}$ of $\mathcal{M}_{SP}, \mathcal{M}_{PQ}$ respectively, which cover $P,$  such that $b^1_P, b^2_P$ contain the disjoint bases $b^3_P, b^4_P$ of $\mathcal{M}_{SP}\times P, \mathcal{M}_{PQ}\times P,$ respectively.
Clearly $b_S\uplus (b^1_P- b^3_P)$ and $b_Q\uplus (b^2_P- b^4_P)$ are disjoint bases respectively of $\mathcal{M}_{S\hat{P}} \equiv [\mathcal{M}_{SP}\times (S\uplus (P- b^3_P))]\circ (S\uplus (P- b^3_P-b^4_P))$ and $\mathcal{M}_{\hat{P}Q} \equiv [\mathcal{M}_{PQ}\times (Q\uplus (P- b^4_P))]\circ (Q\uplus (P- b^3_P-b^4_P)),$ which cover $\hat{P} \equiv (P-b^3_P-b^4_P).$
Therefore $b_S\uplus b_Q$ is a base of $\mathcal{M}_{S\hat{P}}\leftrightarrow_{\mathcal M} \mathcal{M}_{\hat{P}Q}.$

Next let $b_S\uplus b_Q$ be a base of $\mathcal{M}_{S\hat{P}}\leftrightarrow_{\mathcal M} \mathcal{M}_{\hat{P}Q}.$
 We know that $\mathcal{M}_{S\hat{P}}\circ \hat{P}= \mathcal{M}^*_{\hat{P}Q}\circ \hat{P}$ and $\mathcal{M}_{S\hat{P}}\times \hat{P}= \mathcal{M}^*_{\hat{P}Q}\times \hat{P}.$
 Therefore, arguing as in the case of $\mathcal{M}_{SP}, \mathcal{M}_{PQ},$ there exist disjoint bases $b_S\uplus b^1_{\hat{P}}, b_Q\uplus b^2_{\hat{P}}$ of $\mathcal{M}_{S\hat{P}}, \mathcal{M}_{\hat{P}Q},$ respectively such that $b^1_{\hat{P}}\uplus b^2_{\hat{P}}$ covers $\hat{P}.$ Now $b_S\uplus (b^1_{\hat{P}}\uplus b^3_P)$ and $b_Q\uplus (b^2_{\hat{P}}\uplus b^4_P)$ are disjoint bases of $\mathcal{M}_{SP}, \mathcal{M}_{PQ},$ respectively which cover $P= \hat{P}\uplus b^3_P \uplus b^4_P.$ Therefore, $b_S\uplus b_Q$ is a base of $(\mathcal{M}_{SP}\vee \mathcal{M}_{PQ})\times (S\uplus Q)= \mathcal{M}_{SP}\leftrightarrow_{\mathcal M} \mathcal{M}_{PQ}.$
 Thus $\mathcal{M}_{S\hat{P}}\leftrightarrow_{\mathcal M} \mathcal{M}_{\hat{P}Q}= \mathcal{M}_{SP}\leftrightarrow_{\mathcal M} \mathcal{M}_{PQ}.$

\item  We have $\mathcal{M}_{SP}\circ P= \mathcal{M}^*_{PQ}\circ P, \mathcal{M}_{SP}\times P= \mathcal{M}^*_{PQ}\times P,$ $\mathcal{M}_{S\hat{P}}\circ \hat{P}= \mathcal{M}^*_{\hat{P}Q}\circ \hat{P}, \mathcal{M}_{S\hat{P}}\times \hat{P}= \mathcal{M}^*_{\hat{P}Q}\times \hat{P}$ and by part 3 above, $\mathcal{M}_{S\hat{P}}\leftrightarrow_{\mathcal M} \mathcal{M}_{\hat{P}Q}= \mathcal{M}_{SP}\leftrightarrow_{\mathcal M} \mathcal{M}_{PQ}.$
Therefore by Lemma~\ref{cor:minorspecial}, we have that $\mathcal{M}_{SP}\circ S= \mathcal{M}_{S\hat{P}}\circ S= \mathcal{M}_{SQ}\circ S, \mathcal{M}_{SP}\times S= \mathcal{M}_{S\hat{P}}\times S=\mathcal{M}_{SQ}\times S.$

Let $b_S$ be a base of $\mathcal{M}_{SP}\circ S= \mathcal{M}_{S\hat{P}}\circ S.$ Since $b^3_P$ is a base of $\mathcal{M}_{SP}\times P,$ $b_S\uplus b^3_P$ is a base of $\mathcal{M}_{SP}$ and $(b_S\uplus b^3_P)-b^3_P= b_S$ is a base of $\mathcal{M}_{S\hat{P}}.$ Therefore there is a cobase of $\mathcal{M}_{S\hat{P}}$ that contains $\hat{P}.$

Next $b^4_P$ is a base of $\mathcal{M}_{PQ}\times P,$ i.e., a cobase of $(\mathcal{M}_{PQ}\times P)^*= \mathcal{M}^*_{PQ}\circ P= \mathcal{M}_{SP}\circ P.$ Therefore $b_P \equiv P-b^4_P$ is a base of $\mathcal{M}_{SP}\circ P.$ If $b'_S$ is a base of $\mathcal{M}_{SP}\times S = \mathcal{M}_{S\hat{P}}\times S,$ then $b'_S\uplus b_P$ is a base of $\mathcal{M}_{SP}$ and $(b'_S\uplus b_P)- b^3_P= b'_S\uplus ((P-b^4_P) -b^3_P)= b'_S\uplus \hat{P}$ is a base of $\mathcal{M}_{S\hat{P}}.$ Thus there is a base of $\mathcal{M}_{S\hat{P}}$ that contains $\hat{P}.$

Similarly we can show that there is a cobase of $\mathcal{M}_{\hat{P}Q}$ that contains $\hat{P}$ and a base of $\mathcal{M}_{\hat{P}Q}$ that contains $\hat{P}.$

\item  We have $r(\mathcal{M}_{S\hat{P}}\times \hat{P})=0,$ so that $|\hat{P}|= r(\mathcal{M}_{S\hat{P}}\circ \hat{P}) - r(\mathcal{M}_{S\hat{P}}\times \hat{P})= r(\mathcal{M}_{S\hat{P}}\circ S) - r(\mathcal{M}_{S\hat{P}}\times S)= r(\mathcal{M}_{SQ}\circ S) - r(\mathcal{M}_{SQ}\times S) =r(\mathcal{M}_{SQ}\circ Q) - r(\mathcal{M}_{SQ}\times Q),$ using Lemma~\ref{cor:minorspecial}. By Theorem~\ref{thm:lowerboundP}, it follows that $|\hat{P}|$ is the minimum value of $|P|$ for which $\mathcal{M}_{SQ}= \mathcal{M}_{SP}\leftrightarrow_{\mathcal M} \mathcal{M}_{PQ}.$
\end{enumerate}
\end{proof}

\section{$\{S,Q\}$--augmentation of $\mathcal{M}_{SQ}$}
\label{subsec:bipartitioncompletion}

In this section we show that the $\{S,Q\}$--augmentation of a matroid $\mathcal{M}_{SQ}$
is itself a matroid and show that being $\{S,Q\}$--complete is a sufficient condition for $\mathcal{M}_{SQ}$ to have a minimal
decomposition $\mathcal{M}_{SQ} = \mathcal{M}_{SP} \leftrightarrow_{\mathcal{M}} \mathcal{M}_{PQ},$ with size $|P| = r(\mathcal{M}_{SQ} \circ S) - r(\mathcal{M}_{SQ} \times S)$ (Theorem \ref{thm:sqcloseddecomposition}).
It should be noted, however, that the $\{S,Q\}$--augmentation of a matroid $\mathcal{M}_{SQ}$ may not be $\{S,Q\}$--complete. A counterexample is given in the appendix.

We give below, in Theorem \ref{thm:pseudoid},
a simple expression for the $\{S,Q\}$--augmentation of a matroid $\mathcal{M}_{SQ}$ in terms of
the linking operation with the matroid $\mathcal{M}_{QQ'} :\equiv \mathcal{M}^*_{SQ} \leftrightarrow_{\mathcal{M}} (\mathcal{M}_{SQ})_{SQ'}.$
This is analogous to the vector space $\mathcal{V}_{QQ'} :\equiv \mathcal{V}_{SQ} \leftrightarrow_{\mathcal{V}} (\mathcal{V}_{SQ})_{SQ'}.$ For vector spaces we have the result that
$\mathcal{V}_{SQ} \leftrightarrow_{\mathcal{V}} \mathcal{V}_{QQ'} = (\mathcal{V}_{SQ})_{SQ'}$ \cite{narayanan2016}.
For matroids the corresponding operation $(\mathcal{M}_{SQ} \leftrightarrow_{\mathcal{M}} \mathcal{M}_{QQ'})_{SQ}$ leads to the $\{S,Q\}$--augmentation of $\mathcal{M}_{SQ}.$

\begin{theorem}
\label{thm:pseudoid}
Let $\mathcal{M}_{QQ'} :\equiv \mathcal{M}^*_{SQ} \leftrightarrow_{\mathcal{M}} (\mathcal{M}_{SQ})_{SQ'}.$
We then have the following.
\begin{enumerate}
    \item The $\{S,Q\}$--augmentation of $\mathcal{M}_{SQ}$ is the family of bases of the matroid:
$$(\mathcal{M}_{SQ} \leftrightarrow_{\mathcal{M}} \mathcal{M}_{QQ'})_{SQ}=(\mathcal{M}_{SQ})_{SQ'} \leftrightarrow_{\mathcal{M}} [(\mathcal{M}^*_{SQ})_{S'Q'} \leftrightarrow_{\mathcal{M}} (\mathcal{M}_{SQ})_{S'Q}].
$$
    \item If $\mathcal{M}_{SQ}$ is $\{S,Q\}$--complete, then it is its own $\{S,Q\}$--augmentation.
    \item The dual of the $\{S,Q\}$--augmentation of $\mathcal{M}_{SQ}$ is the $\{S,Q\}$--augmentation of $\mathcal{M}^*_{SQ}.$
    \item If $\mathcal{M}_{SQ}$ is $\{S,Q\}$--complete, $\mathcal{M}_{QQ'}$ is $\{Q,Q'\}$--complete. Further, $\mathcal{E}_{Q'}(\mathcal{M}_{QQ'}) = \mathcal{E}_{Q'}((\mathcal{M}_{SQ})_{SQ'})$ and $\mathcal{E}_Q(\mathcal{M}_{QQ'}) = \mathcal{E}_Q(\mathcal{M}^*_{SQ}).$
\end{enumerate}
\end{theorem}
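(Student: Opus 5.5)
The plan is to compute the bases of $\mathcal{M}_{QQ'}$ explicitly, and then the bases of $\mathcal{M}_{SQ}\leftrightarrow_{\mathcal M}\mathcal{M}_{QQ'}$, using the matroid union characterisation of linking. The pair $\mathcal{M}^*_{SQ}$, $(\mathcal{M}_{SQ})_{SQ'}$ is compatible over $S$. Indeed, $(\mathcal{M}^*_{SQ})\circ S=(\mathcal{M}_{SQ}\times S)^*$ and $(\mathcal{M}^*_{SQ})\times S=(\mathcal{M}_{SQ}\circ S)^*$, which are exactly the duals of the corresponding minors of the second factor on $S$. So Lemma~\ref{lem:baseexchange}(2) applies. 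A set $c_Q\uplus b_{Q'}$ is a base of $\mathcal{M}_{QQ'}$ iff there is a partition $S=x_S\uplus y_S$ with $x_S\uplus c_Q$ a base of $\mathcal{M}^*_{SQ}$ and $y_S\uplus b_{Q'}$ a base of $(\mathcal{M}_{SQ})_{SQ'}$. Writing $c_Q=Q-b^1_Q$, the first condition says $y_S\uplus b^1_Q$ is a base of $\mathcal{M}_{SQ}$. Hence
\[
c_Q\uplus b'_{Q'} \text{ is a base of } \mathcal{M}_{QQ'} \iff \exists\, y_S:\ y_S\uplus b^1_Q,\ y_S\uplus b^2_Q\ \text{bases of } \mathcal{M}_{SQ},
\]
where $c_Q=Q-b^1_Q$ and $b'_{Q'}=(b^2_Q)'$.

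Next I apply the same reasoning to $\mathcal{M}_{SQ}\leftrightarrow_{\mathcal M}\mathcal{M}_{QQ'}$. The pair is compatible over $Q$, because part 1 of Lemma~\ref{cor:minorspecial} (applied to $\mathcal{M}^*_{SQ}\leftrightarrow(\mathcal{M}_{SQ})_{SQ'}$) gives $\mathcal{M}_{QQ'}\circ Q=\mathcal{M}^*_{SQ}\circ Q$ and $\mathcal{M}_{QQ'}\times Q=\mathcal{M}^*_{SQ}\times Q$. By Lemma~\ref{lem:baseexchange}(2), $b_S\uplus b'_{Q'}$ is a base iff $Q$ splits as $b^1_Q\uplus c_Q$ with $b_S\uplus b^1_Q$ a base of $\mathcal{M}_{SQ}$ and $c_Q\uplus b'_{Q'}$ a base of $\mathcal{M}_{QQ'}$. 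Combining this with the first paragraph, $b_S\uplus b^2_Q$ (after renaming $Q'$ to $Q$) is a base iff there exist $b^1_Q$ and $y_S$ such that $b_S\uplus b^1_Q$, $y_S\uplus b^1_Q$ and $y_S\uplus b^2_Q$ are all bases. With $b_S=b^2_S$ and $y_S=b^1_S$, this is exactly membership in $B_{SQ}\cup\hat B_{SQ}$. Taking $y_S=b_S$ shows every original base is included, as required.

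The second expression in part 1 follows from restricted associativity (Lemma~\ref{lem:restassoc}) together with the fact that copying commutes with $\vee$ and $\times$. The result is a matroid because linking produces a matroid. Part 2 is then immediate, since completeness means $\hat B_{SQ}=\emptyset$. For part 3, I observe that the augmentation condition is invariant under complementation: $b_S\uplus b_Q$ is a base of $\mathcal{M}_{SQ}$ iff $(S-b_S)\uplus(Q-b_Q)$ is a base of $\mathcal{M}^*_{SQ}$, and the three-base configuration maps to a three-base configuration. Taking complements therefore carries $B\cup\hat B$ for $\mathcal{M}$ onto $B\cup\hat B$ for $\mathcal{M}^*$. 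Theorem~\ref{thm:idtmatroid}(2) gives a consistency check on this.

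For part 4, I use the description of the bases of $\mathcal{M}_{QQ'}$ from the first paragraph. Under completeness, the condition ``there exists $y_S$ adjacent to both $b^1_Q$ and $b^2_Q$'' means $b^1_Q$ and $b^2_Q$ lie in the same block of $\mathcal{E}_Q(\mathcal{M}_{SQ})$, by Lemma~\ref{lem:spclosure}. So $c_Q\uplus b'_{Q'}$ is a base iff $Q-c_Q$ and $b_Q$ lie in the same block. This relation is a disjoint union of complete bipartite pieces, which gives $\{Q,Q'\}$-completeness. The $Q'$-blocks are the copied blocks of $\mathcal{E}_Q(\mathcal{M}_{SQ})$, which are the blocks of $(\mathcal{M}_{SQ})_{SQ'}$. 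The $Q$-blocks are the complements of these, and these are the blocks of $\mathcal{E}_Q(\mathcal{M}^*_{SQ})$ via the duality in Lemma~\ref{lem:spclosure}(1).

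The main obstacle is the compatibility verification that licenses Lemma~\ref{lem:baseexchange}(2) in both stages, in particular identifying the minors of $\mathcal{M}_{QQ'}$ on $Q$. I would do this first.
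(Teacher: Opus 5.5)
Your proof is correct, and it is organized differently from the paper's in a way worth noting.

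\textbf{Part 1.} You use the same machinery as the paper: compatibility, two applications of Lemma~\ref{lem:baseexchange}(2), and Lemma~\ref{cor:minorspecial} to carry compatibility into the second linking. The bracketing is the opposite one, though.
\begin{itemize}
\item You first compute the bases of $\mathcal{M}_{QQ'}$ by linking over $S$, and then link with $\mathcal{M}_{SQ}$ over $Q$. This grouping is literally the definition of $\mathcal{M}_{SQ}\leftrightarrow_{\mathcal M}\mathcal{M}_{QQ'}$, so you need restricted associativity only for the second displayed form.
\item The paper groups the three-fold linking as $(\mathcal{M}_{SQ}\leftrightarrow_{\mathcal M}(\mathcal{M}^*_{SQ})_{S'Q})\leftrightarrow_{\mathcal M}(\mathcal{M}_{SQ})_{S'Q'}$. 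It links over $Q$ first, producing an intermediate matroid on $S\uplus S'$, and then over $S'$. This regrouping already requires Lemma~\ref{lem:restassoc} for the main claim.
\end{itemize}

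\textbf{Part 4.} Your grouping produces a useful by-product: $(Q-b^1_Q)\uplus(b^2_Q)'$ is a base of $\mathcal{M}_{QQ'}$ iff $b^1_Q$ and $b^2_Q$ have a common $S$-partner. Because of this, your part 4 is shorter than the paper's. The paper re-chases bases through Lemma~\ref{lem:baseexchange} separately to obtain $\{Q,Q'\}$-completeness and the two block identities. For you, all three reduce to one observation: under completeness, sharing a partner is the same as lying in one block of $\mathcal{E}_Q(\mathcal{M}_{SQ})$.

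\textbf{Part 3.} Here the routes genuinely differ.
\begin{itemize}
\item The paper dualizes the three-fold linking with Theorem~\ref{thm:idtmatroid} and recognizes the result as the augmentation of $\mathcal{M}^*_{SQ}$.
\item You note that the defining three-base configuration is invariant under complementation. This is more elementary and needs only that part 1 makes the augmentation a matroid.
\end{itemize}

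\textbf{One point to state explicitly.} Lemma~\ref{lem:baseexchange}(2) as written gives only the forward implication. Your ``iff'' characterizations also use the converse: disjoint bases of the two factors that cover the linking set yield a base of the linking. The converse does hold. Under compatibility there exist disjoint bases of the two factors, so the union matroid has rank $r(\mathcal{M}_{SP})+r(\mathcal{M}_{PQ})$, and the linking set is independent in it. Hence the complement of the linking set in such a disjoint union is a base of the contraction.
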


We need the following lemma for the proof of the theorem.

\begin{lemma}
\label{lem:pseudoid0}
Let $\mathcal{M}_{QQ'} :\equiv \mathcal{M}^*_{SQ} \leftrightarrow_{\mathcal{M}} (\mathcal{M}_{SQ})_{SQ'}.$
We then have the following.
\begin{enumerate}
    \item $\mathcal{M}^*_{QQ'} = (\mathcal{M}_{QQ'})_{Q'Q}.$
    \item $\mathcal{M}_{QQ'} \circ Q = \mathcal{M}^*_{SQ} \circ Q, \quad \mathcal{M}_{QQ'} \times Q = \mathcal{M}^*_{SQ} \times Q.$
    \item $(\mathcal{M}_{SQ} \leftrightarrow_{\mathcal{M}} \mathcal{M}_{QQ'}) \circ S = \mathcal{M}_{SQ} \circ S, \quad (\mathcal{M}_{SQ} \leftrightarrow_{\mathcal{M}} \mathcal{M}_{QQ'}) \circ Q' = (\mathcal{M}_{SQ} \circ Q)_{Q'}.$
    \item $(\mathcal{M}_{SQ} \leftrightarrow_{\mathcal{M}} \mathcal{M}_{QQ'}) \times S = \mathcal{M}_{SQ} \times S, \quad (\mathcal{M}_{SQ} \leftrightarrow_{\mathcal{M}} \mathcal{M}_{QQ'}) \times Q' = (\mathcal{M}_{SQ} \times Q)_{Q'}.$
    \item Every base of $(\mathcal{M}_{SQ})_{SQ'}$ is a base of $(\mathcal{M}_{SQ} \leftrightarrow_{\mathcal{M}} \mathcal{M}_{QQ'}).$
\end{enumerate}
\end{lemma}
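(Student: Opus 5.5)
Each part will be obtained by rewriting the relevant expression with restricted associativity of linking (Lemma \ref{lem:restassoc}), together with the dualization rule of Theorem \ref{thm:idtmatroid} and the identities of Lemma \ref{lem:basecontainedcontains}. Throughout, the linking is viewed as a three-factor expression in $\mathcal{M}^*_{SQ}$ and $(\mathcal{M}_{SQ})_{SQ'}$ (and, for parts 3--5, also $\mathcal{M}_{SQ}$) in which each index set occurs at most twice.

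\textbf{Parts 1 and 2.} For part 1, Theorem \ref{thm:idtmatroid}(2) gives
$$\mathcal{M}^*_{QQ'}=\mathcal{M}_{SQ}\leftrightarrow_{\mathcal M}(\mathcal{M}^*_{SQ})_{SQ'}.$$
Renaming $Q\leftrightarrow Q'$ in this expression turns it into $(\mathcal{M}_{SQ})_{SQ'}\leftrightarrow_{\mathcal M}\mathcal{M}^*_{SQ}$, which is $\mathcal{M}_{QQ'}$ with the roles of $Q,Q'$ interchanged. Since linking is symmetric in its two factors, this is $(\mathcal{M}_{QQ'})_{Q'Q}$. For part 2, write $\mathcal{M}_{QQ'}\circ Q=\mathcal{M}_{QQ'}\leftrightarrow_{\mathcal M}\mathcal{F}_{Q'}$ and reassociate:
$$\mathcal{M}_{QQ'}\circ Q=\mathcal{M}^*_{SQ}\leftrightarrow_{\mathcal M}(\mathcal{M}_{SQ}\circ S).$$
The factor $\mathcal{M}_{SQ}\circ S$ equals $(\mathcal{M}_{SQ}^{**})^*\dots$; more directly, $\mathcal{M}_{SQ}\circ S=(\mathcal{M}^*_{SQ}\times S)^*$. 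By Lemma \ref{lem:basecontainedcontains}(2), applied with $\mathcal{M}^*_{SQ}$ in place of $\mathcal{M}_{SP}$ and $S$ as the linking set, this gives $\mathcal{M}^*_{SQ}\times Q$. That does not match the claim $\mathcal{M}_{QQ'}\circ Q=\mathcal{M}^*_{SQ}\circ Q$, so at this point I would recheck the orientation. With the lemma applied as $\mathcal{M}_{SP}\leftrightarrow(\mathcal{M}^*_{SP}\circ P)=\mathcal{M}_{SP}\circ S$, one needs the factor to be $(\mathcal{M}^*_{SQ})^*\circ S=\mathcal{M}_{SQ}\circ S$, which yields exactly $\mathcal{M}^*_{SQ}\circ Q$. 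Similarly, linking with $\mathbf{0}_{Q'}$ produces the factor $\mathcal{M}_{SQ}\times S$ and hence $\mathcal{M}^*_{SQ}\times Q$.

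\textbf{Parts 3 and 4.} I would first reassociate the whole expression:
$$(\mathcal{M}_{SQ}\leftrightarrow_{\mathcal M}\mathcal{M}_{QQ'})\circ S=\mathcal{M}_{SQ}\leftrightarrow_{\mathcal M}(\mathcal{M}_{QQ'}\circ Q).$$
By part 2 this is $\mathcal{M}_{SQ}\leftrightarrow_{\mathcal M}(\mathcal{M}^*_{SQ}\circ Q)$, which equals $\mathcal{M}_{SQ}\circ S$ by Lemma \ref{lem:basecontainedcontains}(2), with $Q$ playing the role of $P$. The contraction to $S$ is handled in the same way using $\mathcal{M}_{QQ'}\times Q=\mathcal{M}^*_{SQ}\times Q$. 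For the restriction and contraction to $Q'$, I would use instead the three-factor form
$$\mathcal{M}_{SQ}\leftrightarrow_{\mathcal M}\mathcal{M}^*_{SQ}\leftrightarrow_{\mathcal M}(\mathcal{M}_{SQ})_{SQ'}$$
and group the first two factors. Linking with $\mathcal{F}_{Q'}$ or $\mathbf{0}_{Q'}$ acts only on the last factor, giving $(\mathcal{M}_{SQ}\circ Q)_{Q'}$ or $(\mathcal{M}_{SQ}\times Q)_{Q'}$ respectively. To make the grouping legitimate, I would note that $\mathcal{M}_{SQ}\leftrightarrow_{\mathcal M}\mathcal{M}^*_{SQ}$ has ground set $S\uplus S'$ only after renaming, so it is cleaner to argue dually via part 1 and the symmetric version of part 2 in $Q'$. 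Concretely, one computes $\mathcal{M}_{QQ'}\circ Q'$ and $\mathcal{M}_{QQ'}\times Q'$ as $(\mathcal{M}_{SQ}\times Q)^{*}_{Q'}$-type minors, then applies Lemma \ref{lem:basecontainedcontains}(2) with $(\mathcal{M}_{SQ})_{SQ'}$.

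\textbf{Part 5, the main obstacle.} Let $b_S\uplus b_{Q'}$ be a base of $(\mathcal{M}_{SQ})_{SQ'}$. I would exhibit a base of $\mathcal{M}_{SQ}\vee\mathcal{M}_{QQ'}$ of the form $b_S\uplus b_{Q'}\uplus Q$, and show that $Q$ is a base of the restriction to $Q$ of this union. The natural witnesses are a base $b_S\uplus b_Q$ of $\mathcal{M}_{SQ}$, where $b_Q$ is the copy of $b_{Q'}$, and the set $(Q-b_Q)\uplus b_{Q'}$. I would verify that the latter is a base of $\mathcal{M}_{QQ'}$ by unwinding $\mathcal{M}^*_{SQ}\vee(\mathcal{M}_{SQ})_{SQ'}$. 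There, $(S-b_S)\uplus(Q-b_Q)$ and $b_S\uplus b_{Q'}$ are maximally distant bases covering $S$, so their union contracted to $Q\uplus Q'$ gives $(Q-b_Q)\uplus b_{Q'}$. Maximal distance should follow from covering $S$ together with the rank count coming from part 2. The remaining check is that this union has the maximum rank, which I expect to be the delicate step; I would settle it by comparing the rank with $r(\mathcal{M}_{SQ}\circ S)+r(\mathcal{M}_{SQ}\circ Q)$ using parts 3 and 4.
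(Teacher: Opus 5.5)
\textbf{Parts 1--2 and the $S$-halves of parts 3--4.} These follow the paper. You reassociate, absorb $\mathcal{F}$ or $\mathbf{0}$ into one factor, and finish with Lemma~\ref{lem:basecontainedcontains}(2). Handling $\times S$ directly through $\mathcal{M}_{QQ'}\times Q=\mathcal{M}^*_{SQ}\times Q$, rather than dualizing part 3 as the paper does, is fine.

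\textbf{The gap: the $Q'$-halves of parts 3 and 4.} Restriction and contraction to $Q'$ are linking with $\mathcal{F}_S$ and $\mathbf{0}_S$, not with $\mathcal{F}_{Q'}$ and $\mathbf{0}_{Q'}$. These act on the factor $\mathcal{M}_{SQ}$, not on $(\mathcal{M}_{SQ})_{SQ'}$. They leave you with $(\mathcal{M}_{SQ}\circ Q)\leftrightarrow_{\mathcal M}\mathcal{M}_{QQ'}$, and evaluating that is where the real work lies. Your fallback names plausible ingredients but never connects them to this matroid. The values it announces (``$(\mathcal{M}_{SQ}\times Q)^*_{Q'}$-type'') are also wrong: $\mathcal{M}_{QQ'}\circ Q'$ and $\mathcal{M}_{QQ'}\times Q'$ are $(\mathcal{M}_{SQ}\circ Q)_{Q'}$ and $(\mathcal{M}_{SQ}\times Q)_{Q'}$, with no dual.

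A complete version of your idea runs as follows.
First, $\mathcal{M}_{QQ'}\circ Q'=(\mathcal{M}^*_{SQ}\circ S)\leftrightarrow_{\mathcal M}(\mathcal{M}_{SQ})_{SQ'}=(\mathcal{M}_{SQ}\circ Q)_{Q'}$, by Lemma~\ref{lem:basecontainedcontains}(2) applied to $(\mathcal{M}_{SQ})_{SQ'}$ over $S$.
Part 1 then gives $\mathcal{M}^*_{QQ'}\circ Q=\mathcal{M}_{SQ}\circ Q$.
Finally, Lemma~\ref{lem:basecontainedcontains}(2) applied to $\mathcal{M}_{QQ'}$ itself over $Q$ gives $(\mathcal{M}_{SQ}\circ Q)\leftrightarrow_{\mathcal M}\mathcal{M}_{QQ'}=\mathcal{M}_{QQ'}\circ Q'=(\mathcal{M}_{SQ}\circ Q)_{Q'}$. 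The same argument works with $\times$ throughout.

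The paper proceeds differently. It renames to the legitimate three-factor expression $\mathcal{M}_{SQ}\leftrightarrow_{\mathcal M}(\mathcal{M}^*_{SQ})_{S'Q}\leftrightarrow_{\mathcal M}(\mathcal{M}_{SQ})_{S'Q'}$ and absorbs $\mathcal{F}_S$ into $\mathcal{M}_{SQ}$. It then applies Lemma~\ref{lem:basecontainedcontains}(2) twice, first over $Q$ and then over $S'$. Part 4 is obtained by applying part 3 to $\mathcal{M}^*_{SQ}$ (whose associated matroid is $\mathcal{M}^*_{QQ'}$ by part 1) and dualizing.

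\textbf{Part 5.} This is correct and groups the linkings differently from the paper. You produce the base $(Q-b_Q)\uplus b_{Q'}$ of $\mathcal{M}_{QQ'}$ by linking over $S$, then link with $\mathcal{M}_{SQ}$ over $Q$. The paper first obtains the base $b_S\uplus(S'-b_{S'})$ of $\mathcal{M}_{SQ}\leftrightarrow_{\mathcal M}(\mathcal{M}^*_{SQ})_{S'Q}$, then links with $(\mathcal{M}_{SQ})_{S'Q'}$ over $S'$. Your order avoids renaming and restricted associativity.

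However, the step you call delicate is immediate, and the comparison you propose would not settle it. The quantity $r(\mathcal{M}_{SQ}\circ S)+r(\mathcal{M}_{SQ}\circ Q)$ is not in general the rank of $\mathcal{M}_{SQ}\vee\mathcal{M}_{QQ'}$, which is $r(\mathcal{M}_{SQ})+|Q|$. The right observation is this. A disjoint union of a base of $\mathcal{M}_1$ and a base of $\mathcal{M}_2$ is independent in $\mathcal{M}_1\vee\mathcal{M}_2$ and has size $r(\mathcal{M}_1)+r(\mathcal{M}_2)\ge r(\mathcal{M}_1\vee\mathcal{M}_2)$, so it is a base. Since it contains the linking set, that set is a base of the union's restriction to it, and the remaining elements form a base of the contraction. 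This disposes of both of your linking steps without any appeal to parts 2--4.
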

The proof of the lemma is relegated to \ref{sec:pseudoid}.

\begin{proof} (of Theorem \ref{thm:pseudoid})
\begin{enumerate}
\item  We first note that:
$\mathcal{M}_{SQ} \leftrightarrow_{\mathcal{M}} \mathcal{M}_{QQ'} :\equiv \mathcal{M}_{SQ} \leftrightarrow_{\mathcal{M}} (\mathcal{M}^*_{SQ} \leftrightarrow_{\mathcal{M}} (\mathcal{M}_{SQ})_{SQ'}) = \mathcal{M}_{SQ} \leftrightarrow_{\mathcal{M}} (\mathcal{M}^*_{S'Q} \leftrightarrow_{\mathcal{M}} (\mathcal{M}_{S'Q})_{S'Q'}),$
where by restricted associativity of the linking operation (Lemma \ref{lem:restassoc}), the last expression has a unique meaning even if we remove the brackets.
We need to show that the set of bases of $\mathcal{M}_{SQ} \leftrightarrow_{\mathcal{M}} \mathcal{M}_{QQ'} = \mathcal{M}_{SQ} \leftrightarrow_{\mathcal{M}} \mathcal{M}^*_{S'Q} \leftrightarrow_{\mathcal{M}} (\mathcal{M}_{S'Q})_{S'Q'}$
contain copies of bases of $\mathcal{M}_{SQ}$ and only those additional bases we require for the augmentation of the set of bases of $\mathcal{M}_{SQ}.$

Let $b_S \uplus b_Q$ be a base of $\mathcal{M}_{SQ}.$ We will show that $b_S \uplus b_{Q'}$ is a base of $\mathcal{M}_{SQ} \leftrightarrow_{\mathcal{M}} \mathcal{M}_{QQ'}.$

We have $b_{S} \uplus {b}_{Q}$ and $(S' - b_{S'}) \uplus (Q - b_Q)$ as bases of $\mathcal{M}_{SQ}$ and $(\mathcal{M}_{SQ}^*)_{S'Q},$ respectively.
Therefore we must have $b_S \uplus (S' - b_{S'})$ as a base of $(\mathcal{M}_{SQ} \vee (\mathcal{M}^*_{SQ})_{S'Q}) \times (S \uplus S') = \mathcal{M}_{SQ} \leftrightarrow_{\mathcal{M}} (\mathcal{M}^*_{SQ})_{S'Q}.$
Since $b_S \uplus (S' - b_{S'})$ and $b_{S'} \uplus b_{Q'}$ are bases of $\mathcal{M}_{SQ} \leftrightarrow_{\mathcal{M}} (\mathcal{M}^*_{SQ})_{S'Q}$ and $(\mathcal{M}_{SQ})_{S'Q'},$ respectively, it follows that
$b_S \uplus b_{Q'}$ is a base of $\mathcal{M}_{SQ} \leftrightarrow_{\mathcal{M}} \mathcal{M}_{QQ'} = (\mathcal{M}_{SQ} \leftrightarrow_{\mathcal{M}} (\mathcal{M}^*_{SQ})_{S'Q}) \leftrightarrow_{\mathcal{M}} (\mathcal{M}_{SQ})_{S'Q'}.$

Let $b_S \uplus b_Q, \, \hat{b}_S \uplus b_Q, \, b_S \uplus \hat{b}_Q$ be bases of $\mathcal{M}_{SQ}.$ We will show that $\hat{b}_S \uplus \hat{b}_{Q'}$ is a base of $\mathcal{M}_{SQ} \leftrightarrow_{\mathcal{M}} \mathcal{M}_{QQ'} = \mathcal{M}_{SQ} \leftrightarrow_{\mathcal{M}} (\mathcal{M}^*_{SQ})_{S'Q} \leftrightarrow_{\mathcal{M}} (\mathcal{M}_{SQ})_{S'Q'},$ where $\hat{b}_{Q'}$ is a copy of $\hat{b}_Q.$

We have bases $(S' - b_{S'}) \uplus (Q - b_Q)$ and $b_{S'} \uplus \hat{b}_{Q'}$ of matroids $(\mathcal{M}_{SQ})^*_{S'Q}$ and $(\mathcal{M}_{SQ})_{S'Q'},$ respectively, $b_{S'}$ and $\hat{b}_{Q'}$ being copies of $b_S$ and $\hat{b}_Q,$ respectively.
Since $\hat{b}_S \uplus b_Q$ is a base of $\mathcal{M}_{SQ}$ and $(S' - b_{S'}) \uplus (Q - b_Q)$ is a base of $(\mathcal{M}^*_{SQ})_{S'Q},$ we have that $\hat{b}_S \uplus (S' - b_{S'})$ is a base of $(\mathcal{M}_{SQ} \vee (\mathcal{M}^*_{SQ})_{S'Q}) \times (S \uplus S') = \mathcal{M}_{SQ} \leftrightarrow_{\mathcal{M}} (\mathcal{M}^*_{SQ})_{S'Q}.$
Since $b_{S'} \uplus \hat{b}_{Q'}$ is a base of $(\mathcal{M}_{SQ})_{S'Q'},$ it follows that $\hat{b}_S \uplus \hat{b}_{Q'}$ is a base of $((\mathcal{M}_{SQ} \leftrightarrow_{\mathcal{M}} (\mathcal{M}^*_{SQ})_{S'Q}) \vee (\mathcal{M}_{SQ})_{S'Q'}) \times (S \uplus Q') = (\mathcal{M}_{SQ} \leftrightarrow_{\mathcal{M}} (\mathcal{M}^*_{SQ})_{S'Q}) \leftrightarrow_{\mathcal{M}} (\mathcal{M}_{SQ})_{S'Q'}.$

Next we will show that if $\hat{b}_S \uplus \hat{b}_{Q'}$ is a base of $\mathcal{M}_{SQ} \leftrightarrow_{\mathcal{M}} \mathcal{M}_{QQ'},$ then there exist bases $\hat{b}_S \uplus b_Q, \, b_{S} \uplus b_Q, \, b_S \uplus \hat{b}_Q$ of $\mathcal{M}_{SQ}.$

We have that $\mathcal{M}_{SQ}$ and $(\mathcal{M}^*_{SQ})_{S'Q}$ are compatible. By Lemma \ref{cor:minorspecial}, we therefore have $(\mathcal{M}_{SQ} \leftrightarrow_{\mathcal{M}} (\mathcal{M}^*_{SQ})_{S'Q}) \circ S' = (\mathcal{M}^*_{SQ})_{S'Q} \circ S'$ and $(\mathcal{M}_{SQ} \leftrightarrow_{\mathcal{M}} (\mathcal{M}^*_{SQ})_{S'Q}) \times S' = (\mathcal{M}^*_{SQ})_{S'Q} \times S'.$ Therefore, $(\mathcal{M}_{SQ} \leftrightarrow_{\mathcal{M}} (\mathcal{M}^*_{SQ})_{S'Q})$ and $(\mathcal{M}_{SQ})_{S'Q'}$ are compatible.

Since $\hat{b}_S \uplus \hat{b}_{Q'}$ is a base of $(\mathcal{M}_{SQ} \leftrightarrow_{\mathcal{M}} (\mathcal{M}^*_{SQ})_{S'Q}) \leftrightarrow_{\mathcal{M}} (\mathcal{M}_{SQ})_{S'Q'},$ by Lemma \ref{lem:baseexchange}, we may assume that there are disjoint bases $B_{SS'}$ and $b_{S'} \uplus \hat{b}_{Q'}$ of the compatible pair $(\mathcal{M}_{SQ} \leftrightarrow_{\mathcal{M}} (\mathcal{M}^*_{SQ})_{S'Q})$ and $(\mathcal{M}_{SQ})_{S'Q'},$ respectively, such that $B_{SS'} \uplus (b_{S'} \uplus \hat{b}_{Q'})$ covers $S'$ and $(B_{SS'} \uplus (b_{S'} \uplus \hat{b}_{Q'})) - S' = \hat{b}_S \uplus \hat{b}_{Q'}.$

Let $B_{SS'} = (S' - b_{S'}) \uplus \hat{b}_S.$ Again, by Lemma \ref{lem:baseexchange}, there must be disjoint bases $\hat{b}_S \uplus b_Q$ and $(S' - b_{S'}) \uplus (Q - b_Q)$ of the compatible pair $\mathcal{M}_{SQ}$ and $(\mathcal{M}^*_{SQ})_{S'Q},$ respectively, so that their union covers $Q$ and $(\hat{b}_S \uplus b_Q) \uplus ((S' - b_{S'}) \uplus (Q - b_Q)) - Q = (S' - b_{S'}) \uplus \hat{b}_S.$
From this it follows that there exist bases $\hat{b}_S \uplus b_Q$ and $(S - b_{S}) \uplus (Q - b_Q)$ of $\mathcal{M}_{SQ}$ and $\mathcal{M}^*_{SQ},$ respectively, and therefore bases $\hat{b}_S \uplus b_Q$ and $b_{S} \uplus b_Q$ of $\mathcal{M}_{SQ}.$
We already have ${b}_{S'} \uplus \hat{b}_{Q'}$ as a base of $(\mathcal{M}_{SQ})_{S'Q'}.$ Therefore there exist bases $\hat{b}_S \uplus b_Q, \, b_{S} \uplus b_Q, \, b_S \uplus \hat{b}_Q$ of $\mathcal{M}_{SQ},$ as needed.

Thus the $\{S,Q\}$--augmentation of $\mathcal{M}_{SQ}$ is the family of bases of the matroid $(\mathcal{M}_{SQ} \leftrightarrow_{\mathcal{M}} \mathcal{M}_{QQ'})_{SQ}.$

\item  This is immediate from the above.

\item  By part 1 of this theorem we have that the $\{S,Q\}$--augmentation of $\mathcal{M}_{SQ}$ is equal to $(\mathcal{M}_{SQ})_{SQ'} \leftrightarrow_{\mathcal{M}} (\mathcal{M}^*_{SQ})_{S'Q'} \leftrightarrow_{\mathcal{M}} (\mathcal{M}_{SQ})_{S'Q}.$
The dual of the $\{S,Q\}$--augmentation of $\mathcal{M}_{SQ}$ is 

$[(\mathcal{M}_{SQ})_{SQ'} \leftrightarrow_{\mathcal{M}} (\mathcal{M}^*_{SQ})_{S'Q'} \leftrightarrow_{\mathcal{M}} (\mathcal{M}_{SQ})_{S'Q}]^*= (\mathcal{M}_{SQ})^*_{SQ'} \leftrightarrow_{\mathcal{M}} (\mathcal{M}^*_{SQ})^*_{S'Q'} \leftrightarrow_{\mathcal{M}} (\mathcal{M}_{SQ})^*_{S'Q},$ using Theorem \ref{thm:idtmatroid}. 
The latter expression is obtained by replacing $\mathcal{M}_{SQ},$ wherever it occurs in the expression for the $\{S,Q\}$--augmentation of $\mathcal{M}_{SQ},$ by $\mathcal{M}^*_{SQ}$ and is therefore the expression for $\{S,Q\}$--augmentation of $\mathcal{M}^*_{SQ}.$

\item  We first note that, from parts 2 and 3 above, since $\mbox{$\cal M$}_{SQ}$ is given to be $\{S,Q\}-$complete, it follows that $\mbox{$\cal M$}^*_{SQ}$ is also  $\{S,Q\}-$complete.
Let $(Q-b^1_Q)\uplus b^2_{Q'},(Q-b^3_Q)\uplus b^4_{Q'},(Q-b^1_Q)\uplus b^4_{Q'}$ be bases in $\mbox{$\cal M$}_{QQ'}.$ We need to show that $(Q-b^3_Q)\uplus b^2_{Q'}$ is also a base of $\mbox{$\cal M$}_{QQ'}.$ Since $\mbox{$\cal M$}_{QQ'}:\equiv  \mbox{$\cal M$}^*_{SQ}\leftrightarrow_{\mathcal M} (\mbox{$\cal M$}_{SQ})_{SQ'}$ 
 and $(Q-b^1_Q)\uplus b^2_{Q'}$ is a base of $\mbox{$\cal M$}_{QQ'},$ we must have, using Lemma \ref{lem:baseexchange}, 
 bases $(Q-b^1_Q)\uplus (S-b^a_S), b^a_S\uplus b^2_{Q'}$ of $\mbox{$\cal M$}^*_{SQ}, (\mbox{$\cal M$}_{SQ})_{SQ'},$ respectively. Similarly we must have  bases $(Q-b^3_Q)\uplus (S-b^b_S),(Q-b^1_Q)\uplus (S-b^c_S)$ of $\mbox{$\cal M$}^*_{SQ}$ and  bases $b^b_S\uplus b^4_{Q'}, b^c_S\uplus b^4_{Q'}$ of $(\mbox{$\cal M$}_{SQ})_{SQ'}$
 respectively. 
 Since $(Q-b^1_Q)\uplus (S-b^a_S),(Q-b^1_Q)\uplus (S-b^c_S)$ are bases of
$ \mbox{$\cal M$}^*_{SQ}, $ it follows that $(S-b^a_S),(S-b^c_S)$ belong to the same block of $\mbox{$\cal E$}_S(\mbox{$\cal M$}^*_{SQ})$ and therefore
 $ b^a_S,b^c_S$ belong to the same block of $\mbox{$\cal E$}_S(\mbox{$\cal M$}_{SQ}).$  
 Since $b^b_S\uplus b^4_{Q'}, b^c_S\uplus b^4_{Q'}$ are bases of $(\mbox{$\cal M$}_{SQ})_{SQ'},$
  it follows that $ b^b_S,b^c_S$ belong to the same block of $\mbox{$\cal E$}_S(\mbox{$\cal M$}_{SQ}).$ 
 Thus $b^a_S,b^b_S$ belong to the same block of $\mbox{$\cal E$}_S(\mbox{$\cal M$}_{SQ}).$
Since $(Q-b^3_Q)\uplus (S-b^b_S)$ is a base of $\mbox{$\cal M$}^*_{SQ}$ it follows that 
 $(Q-b^3_Q)\uplus (S-b^a_S)$ is also a base of $\mbox{$\cal M$}^*_{SQ}.$ 
 Now $b^a_S\uplus b^2_{Q'}$ is a base of $\mbox{$\cal M$}_{SQ}$ so that $(Q-b^3_Q)\uplus b^2_{Q'}$ is a base of $\mbox{$\cal M$}_{QQ'} :\equiv  \mbox{$\cal M$}^*_{SQ}\leftrightarrow_{\mathcal M} (\mbox{$\cal M$}_{SQ})_{SQ'}.$
 
Thus $\mbox{$\cal M$}_{QQ'}$ is $\{Q,Q'\}-$complete.

Now let $b^3_{Q'}, b^4_{Q'}$ belong to the same block of $\mbox{$\cal E$}_{Q'}((\mbox{$\cal M$}_{SQ})_{SQ'}),$
 i.e., $b^3_{Q}, b^4_{Q}$ belong to the same block of $\mbox{$\cal E$}_{Q}(\mbox{$\cal M$}_{SQ}).$
This means that there exists an independent set $b^1_S$ in $\mbox{$\cal M$}_{SQ},$ such that
 $b^1_S\uplus b^3_{Q}, b^1_S\uplus b^4_{Q},$
 are bases of $\mbox{$\cal M$}_{SQ},$ i.e.,  $b^1_S\uplus b^3_{Q'}, b^1_S\uplus b^4_{Q'},$
 are bases of $(\mbox{$\cal M$}_{SQ})_{SQ'}$ and 
$(Q-b^3_{Q})\uplus (S-b^1_S)$ is a  base of
$\mbox{$\cal M$}^*_{SQ}.$ Therefore $ (Q-b^3_{Q})\uplus b^3_{Q'}, (Q-b^3_{Q})\uplus b^4_{Q'}$ are  bases of $\mbox{$\cal M$}_{QQ'}:\equiv  \mbox{$\cal M$}^*_{SQ}\leftrightarrow_{\mathcal M} (\mbox{$\cal M$}_{SQ})_{SQ'}.$
Therefore $b^3_{Q'},b^4_{Q'}$ belong  to the same block of $\mbox{$\cal E$}_{Q'}(\mbox{$\cal M$}_{QQ'}).$  
\\Next let $b^3_{Q'},b^4_{Q'}$ belong  to the same block of $\mbox{$\cal E$}_{Q'}(\mbox{$\cal M$}_{QQ'}),$
 i.e., there exist bases 
$ (Q-b^1_{Q})\uplus b^3_{Q'}, (Q-b^1_{Q})\uplus b^4_{Q'}$ of $\mbox{$\cal M$}_{QQ'}:\equiv  \mbox{$\cal M$}^*_{SQ}\leftrightarrow_{\mathcal M} (\mbox{$\cal M$}_{SQ})_{SQ'}.$
Then there exist subsets $b^a_S, b^b_S$ such that $(S-b^a_S)\uplus (Q-b^1_Q), (S-b^b_S)\uplus (Q-b^1_Q)$ are bases of $\mbox{$\cal M$}^*_{SQ}$ and $b^a_S\uplus b^3_{Q'},
 b^b_S\uplus b^4_{Q'} $ are bases of $(\mbox{$\cal M$}_{SQ})_{SQ'}.$
 This means $(S-b^a_S), (S-b^b_S)$ belong to the same block of $\mbox{$\cal E$}_S(\mbox{$\cal M$}^*_{SQ})$ so that $b^a_S, b^b_S$ belong to the same block of $\mbox{$\cal E$}_S((\mbox{$\cal M$}_{SQ})_{SQ'}).$
 Therefore $  b^3_{Q'},  b^4_{Q'}$ belong to the same block of $\mbox{$\cal E$}_{Q'}((\mbox{$\cal M$}_{SQ})_{SQ'}).$  
\\
Thus $\mbox{$\cal E$}_{Q'}((\mbox{$\cal M$}_{SQ})_{SQ'})= \mbox{$\cal E$}_{Q'}(\mbox{$\cal M$}_{QQ'}).$\\
Now  $b_S \uplus b_Q$ is a base of $\mathcal{M}_{SQ}$ iff $(S - b_S) \uplus (Q - b_Q)$ is a base of $\mathcal{M}^*_{SQ}.$
It follows that $\mbox{$\cal E$}_{Q'}((\mbox{$\cal M$}^*_{SQ})_{SQ'})= \mbox{$\cal E$}_{Q'}(\mbox{$\cal M$}^*_{QQ'}),$ i.e.,
$\mbox{$\cal E$}_{Q}(\mbox{$\cal M$}^*_{SQ})=\mbox{$\cal E$}_{Q}((\mbox{$\cal M$}^*_{QQ'})_{Q'Q})=  \mbox{$\cal E$}_{Q}(\mbox{$\cal M$}_{QQ'}),$ 
 using  Lemma \ref{lem:pseudoid0}  part 1.

\end{enumerate}
\end{proof}

\subsection{Minimization of decomposition of $\{S,Q\}$--complete $\mathcal{M}_{SQ}$}
\label{subsec:minimalbipartition}

Minimal decomposition of a matroid $\mathcal{M}_{SQ}$ into $\mathcal{M}_{SP} \leftrightarrow_{\mathcal{M}} \mathcal{M}_{PQ}$ is not always possible (see \ref{sec:mindecompmatroid}).
But if the matroid $\mathcal{M}_{SQ}$ is $\{S,Q\}$--complete, there is a simple way of minimally decomposing it into $\mathcal{M}_{SP} \leftrightarrow_{\mathcal{M}} \mathcal{M}_{PQ},$ with the additional condition that the matroids $\mathcal{M}_{SP}$ and $\mathcal{M}_{PQ}$ are respectively $\{S,P\}$--complete and $\{P,Q\}$--complete.
To reach this goal we need preliminary results which are useful in themselves.

It is a useful fact that the minors of $\{S,Q\}$--complete matroids are themselves complete with respect to suitable partitions of the underlying subsets. This we prove next.

\begin{theorem}
\label{lem:sqrestrictcontract}
Let matroid $\mathcal{M}_{SQ}$ be $\{S,Q\}$--complete. Let $S^2 \subseteq S^1 \subseteq S$ and $Q^2 \subseteq Q^1 \subseteq Q.$ We have the following.
\begin{enumerate}
    \item Let $S \uplus Q - (S^1 \uplus Q^1)$ contain no bonds of $\mathcal{M}_{SQ}.$ Then $\mathcal{M}_{S^1Q^1} :\equiv \mathcal{M}_{SQ} \circ (S^1 \uplus Q^1)$ is $\{S^1,Q^1\}$--complete. 

Further, if $Q = Q^1,$  and if $I_j$ is a block of $\mathcal{E}_S(\mathcal{M}_{SQ}),$ 
 then $I^1_j:\equiv \{X\in I_j, X\subseteq S^1\}$ is a block 
 of $\mathcal{E}_{S^1}(\mathcal{M}_{S^1Q^1}),$ iff $I^1_j$ is nonempty.

    \item Let $S \uplus Q - (S^1 \uplus Q^1)$ contain no circuits of $\mathcal{M}_{SQ}.$ Then $\mathcal{M}_{S^1Q^1} :\equiv \mathcal{M}_{SQ} \times (S^1 \uplus Q^1)$ is $\{S^1,Q^1\}$--complete. Further, if $Q = Q^1$ 
and if $I_j$ is a block of $\mathcal{E}_S(\mathcal{M}_{SQ}),$ 
then $I^1_j:\equiv \{X, (X \uplus (S-S^1))\in I_j, X\subseteq S^1\} $ is a  block of $\mathcal{E}_{S^1}(\mathcal{M}_{S^1Q^1}),$ 
  iff $I^1_j$ is nonempty.
    
    \item Let $S \uplus Q - (S^1 \uplus Q^1)$ contain no bonds of $\mathcal{M}_{SQ}$ and let $S^1 \uplus Q^1 - (S^2 \uplus Q^2)$ contain no circuits of $\mathcal{M}_{SQ}.$ Then $\mathcal{M}_{S^2Q^2} :\equiv \mathcal{M}_{SQ} \circ (S^1 \uplus Q^1) \times (S^2 \uplus Q^2)$ is $\{S^2,Q^2\}$--complete.  Further, if $Q = Q^2$ and if $I_j$ is a block of $\mathcal{E}_S(\mathcal{M}_{SQ}),$ 
$I^2_j:\equiv \{X, (X \uplus (S^1-S^2))\in I_j, X\subseteq S^2\} $ 
is a  block of $\mathcal{E}_{S^2}(\mathcal{M}_{S^2Q^2}),$ 
iff $I^2_j$ is nonempty.   If for each $I_j$ in $\mathcal{E}_S(\mathcal{M}_{SQ}),$ $I^2_j$ is nonempty, then $\mathcal{E}_Q(\mathcal{M}_{SQ})= \mathcal{E}_{Q^2}(\mathcal{M}_{S^2Q^2}).$

    \item Let $S^2 \subseteq S^1 \subseteq S$ and $Q^2 \subseteq Q^1 \subseteq Q.$ Then $\mathcal{M}_{S^2Q^2} :\equiv \mathcal{M}_{SQ} \circ (S^1 \uplus Q^1) \times (S^2 \uplus Q^2)$ is $\{S^2,Q^2\}$--complete.
% Further, if $S=S^1=S^2,$ then 
%$\mathcal{E}_S(\mathcal{M}_{SQ})= \mathcal{E}_{S^2}(\mathcal{M}_{S^2Q^2}).$
\end{enumerate}
\end{theorem}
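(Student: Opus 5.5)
The plan is to reduce everything to one observation about bases. If a set $T$ contains no bond of $\mathcal{M}_{SQ}$, then $r(\mathcal{M}_{SQ}\circ (S\uplus Q-T))=r(\mathcal{M}_{SQ})$. Hence the bases of $\mathcal{M}_{SQ}\circ(S\uplus Q-T)$ are exactly the bases of $\mathcal{M}_{SQ}$ that avoid $T$.

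\emph{Part 1.} Take $T=S\uplus Q-(S^1\uplus Q^1)$. Suppose $b^1_{S^1}\uplus b^1_{Q^1}$, $b^1_{S^1}\uplus b^2_{Q^1}$ and $b^2_{S^1}\uplus b^1_{Q^1}$ are bases of $\mathcal{M}_{S^1Q^1}$. By the observation they are bases of $\mathcal{M}_{SQ}$. By $\{S,Q\}$-completeness, $b^2_{S^1}\uplus b^2_{Q^1}$ is a base of $\mathcal{M}_{SQ}$. It lies in $S^1\uplus Q^1$, so it is a base of $\mathcal{M}_{S^1Q^1}$.

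For the statement about blocks, assume $Q=Q^1$.
\begin{itemize}
\item For $X\subseteq S^1$, the set $\{I_Q : X\uplus I_Q \text{ is a base}\}$ is the same whether computed in $\mathcal{M}_{SQ}$ or in $\mathcal{M}_{S^1Q}$, again by the observation.
\item So $X$ is admissible in $\mathcal{M}_{S^1Q}$ iff it is admissible in $\mathcal{M}_{SQ}$.
\item Two such sets are equivalent in one matroid iff they are equivalent in the other.
\end{itemize}
Therefore the block of $X$ in $\mathcal{E}_{S^1}(\mathcal{M}_{S^1Q^1})$ is $\{Y\in I_j: Y\subseteq S^1\}$, where $I_j$ is the block of $X$ in $\mathcal{E}_S(\mathcal{M}_{SQ})$. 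This is exactly the claim, with nonemptiness the only proviso.

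\emph{Part 2.} This follows by duality. A circuit of $\mathcal{M}_{SQ}$ is a bond of $\mathcal{M}^*_{SQ}$, and $\mathcal{M}_{SQ}\times(S^1\uplus Q^1)=(\mathcal{M}^*_{SQ}\circ(S^1\uplus Q^1))^*$. Lemma~\ref{lem:spclosure}(1) shows that $\mathcal{M}^*_{SQ}$ is $\{S,Q\}$-complete. Part 1 then applies to it, and dualizing back gives completeness of the contraction.

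For the blocks, it is cleaner to argue directly, since $T$ is independent here. The bases of $\mathcal{M}_{SQ}\times(S^1\uplus Q^1)$ are exactly the sets $B-T$ with $B$ a base of $\mathcal{M}_{SQ}$ containing $T$. With $Q=Q^1$ we have $T=S-S^1$. Then $X\uplus I_Q$ is a base of the contraction iff $(X\uplus(S-S^1))\uplus I_Q$ is a base of $\mathcal{M}_{SQ}$. The argument of Part 1 now goes through with $X$ replaced by $X\uplus(S-S^1)$, which gives the description of $I^1_j$.

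\emph{Part 3.} First apply Part 1 to $\mathcal{M}_{SQ}$, then Part 2 to $\mathcal{M}_{S^1Q^1}$. This requires that $S^1\uplus Q^1-(S^2\uplus Q^2)$ contain no circuit of $\mathcal{M}_{S^1Q^1}$. That holds because circuits of a restriction are circuits of $\mathcal{M}_{SQ}$. Composing the two block descriptions gives $I^2_j$.

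For the final claim, suppose every $I^2_j$ is nonempty. Then the block map $I_j\mapsto I^2_j$ is a bijection onto $\mathcal{E}_{S^2}(\mathcal{M}_{S^2Q^2})$. Each partner set $\hat{\mathbf I}^j_Q$ is unchanged, since partners are computed via $X\uplus(S^1-S^2)$ with $Q$ untouched. By Lemma~\ref{lem:spclosure}(3), the $Q$-shores therefore coincide.

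\emph{Part 4.} This is the main obstacle, since no hypothesis is placed on the removed sets. I will use the standard normalization of minors. Let $D$ be the deleted set and $C$ the contracted set. Then there are $D'\uplus C'=D\uplus C$ with $C'$ independent and $D'$ coindependent such that $\mathcal{M}\circ(E-D)\times(E-D-C)=\mathcal{M}\circ(E-D')\times(E-D'-C')$. To obtain it:
\begin{itemize}
\item first shrink $C$ to a base of $\mathcal{M}\circ C$ and move the rest of $C$ into the deleted set, since contracting and deleting loops of the contraction agree;
\item then dually shrink the deleted set to a maximal coindependent subset, moving the remainder into the contracted set;
\item then check that the contracted set stays independent, using $r(\mathcal{M}\circ(E-D'))=r(\mathcal{M})$.
\end{itemize}
Put $S^{1\prime}=S^2\uplus(C'\cap S)$ and $Q^{1\prime}=Q^2\uplus(C'\cap Q)$. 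Then $S^2\subseteq S^{1\prime}\subseteq S$ and $Q^2\subseteq Q^{1\prime}\subseteq Q$, and the hypotheses of Part 3 hold. Hence $\mathcal{M}_{S^2Q^2}$ is $\{S^2,Q^2\}$-complete.
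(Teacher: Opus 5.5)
Your proposal is correct. For Parts 1--3 it follows the paper's argument: restrict to a set that still contains a base, use duality for the contraction, then compose the two. Two small differences there. You spell out the block statements via partner sets, where the paper just calls them immediate. And for duality of completeness you cite Lemma~\ref{lem:spclosure}(1) directly, whereas the paper goes through Theorem~\ref{thm:pseudoid}(3).

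The genuine difference is Part 4.

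\textbf{The paper's route.} It deletes or contracts one element at a time. A non-coloop can be deleted by Part 1 and a non-loop contracted by Part 2. For a loop or coloop, deletion and contraction coincide, so every single-element step is covered and completeness propagates inductively along the shrinking bipartition.

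\textbf{Your route.} You normalize the minor once, writing it as $\mathcal{M}\circ(E-D')\times(E-D'-C')$ with $C'$ independent and $D'$ coindependent. This is the standard lemma in Oxley's book. You then apply Part 3 a single time with $S^{1\prime}=S^2\uplus(C'\cap S)$ and $Q^{1\prime}=Q^2\uplus(C'\cap Q)$.

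\textbf{Comparison.} The paper's route is more elementary and needs nothing beyond Parts 1 and 2. Yours avoids the induction and shows Part 4 to be literally a special case of Part 3, at the price of the normalization lemma. Your sketch of that lemma is sound. The independence check amounts to this: by maximality of $D'$ together with $r(\mathcal{M}\circ(E-D'))=r(\mathcal{M})$, each element moved from the deleted set into the contracted set is a coloop of $\mathcal{M}\circ(E-D')$. Hence these elements, together with the already independent part of the contracted set, lie in a common base.
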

\begin{proof}
\begin{enumerate}
    \item Since the subset $(S \uplus Q) - (S^1 \uplus Q^1)$ contains no bonds of $\mathcal{M}_{SQ},$ it can be included in a cobase of $\mathcal{M}_{SQ}.$ Therefore $S^1 \uplus Q^1$ contains a base of $\mathcal{M}_{SQ}.$ Therefore bases of $\mathcal{M}_{S^1Q^1}$ are precisely those bases of $\mathcal{M}_{SQ}$ that are contained in $S^1 \uplus Q^1.$
    
    Now $\mathcal{M}_{SQ}$ is $\{S,Q\}$--complete. So if $b^1_{S^1} \uplus b^1_{Q^1}, \, b^1_{S^1} \uplus b^2_{Q^1}, \, b^2_{S^1} \uplus b^1_{Q^1}$ are bases of $\mathcal{M}_{S^1Q^1},$ they are bases of $\mathcal{M}_{SQ},$ so that $b^2_{S^1} \uplus b^2_{Q^1}$ must be a base of $\mathcal{M}_{SQ}$ that is contained in $S^1 \uplus Q^1.$ Therefore $b^2_{S^1} \uplus b^2_{Q^1}$ must be a base of $\mathcal{M}_{S^1Q^1}.$ Thus $S^1 \uplus Q^1$ is $\{S^1,Q^1\}$--complete.
    
    The statement about blocks of $\mathcal{E}_{S^1}(\mathcal{M}_{S^1Q^1})$ is immediate.

    \item This follows by duality from the previous part. If the subset $S \uplus Q - (S^1 \uplus Q^1)$ contains no circuits of $\mathcal{M}_{SQ},$ then it contains no bonds of $\mathcal{M}^*_{SQ}.$ By part 3 of Theorem \ref{thm:pseudoid}, $\mathcal{M}^*_{AB}$ on $A \uplus B$ is $\{A,B\}$--complete iff $\mathcal{M}_{AB}$ is $\{A,B\}$--complete. Since $(\mathcal{M}^*_{SQ} \circ (S^1 \uplus Q^1))^* = \mathcal{M}_{SQ} \times (S^1 \uplus Q^1),$ the result follows.
    
    The statement about blocks of $\mathcal{E}_{S^1}(\mathcal{M}_{S^1Q^1})$ is immediate.

    \item We note that the subset $S^1 \uplus Q^1 - (S^2 \uplus Q^2)$ contains no circuits of $\mathcal{M}_{SQ}$ iff it contains no circuits of $\mathcal{M}_{SQ} \circ (S^1 \uplus Q^1).$ The result now follows from the previous two parts of the lemma.

The statement about blocks of $\mathcal{E}_{S^2}(\mathcal{M}_{S^2Q^2})$ is immediate.

Let $Q=Q^2$ and let $I_j,\hat{I}_j$ be corresponding blocks of $\mathcal{E}_S(\mathcal{M}_{SQ}),\mathcal{E}_Q(\mathcal{M}_{SQ})$ respectively such that for $b_S\in I_j,
 b_Q\in \hat{I}_j,$ we have $b_S\uplus b_Q$ as a base of $\mathcal{M}_{SQ}.$
 If $I^2_j$ is nonempty, it is clear that for some $b'_{S_2}\in I^2_j,$ 
 $b'_{S_2}\uplus b_Q$ is a base of $ \mathcal{E}_{S^2}(\mathcal{M}_{S^2Q^2}).$
 It follows that $I^2_j,\hat{I}_j$ are corresponding blocks of $\mathcal{E}_{S^2}(\mathcal{M}_{S^2Q^2}),  \mathcal{E}_{Q^2}(\mathcal{M}_{S^2Q^2}).$
 such that for $b_{S_2}\in I^2_j,
 b_Q\in \hat{I}_j,$ we have $b_{S_2}\uplus b_Q$ as a base of $\mathcal{M}_{S^2Q^2}.$
 We are given that for each $I_j$ the block $I^2_j$ is nonempty.
 Therefore blocks of $\mathcal{E}_Q(\mathcal{M}_{SQ})$ remain as blocks of $ \mathcal{E}_{Q^2}(\mathcal{M}_{S^2Q^2}).$

    \item Any minor can be obtained by deleting or contracting elements one at a time. We apply parts 1 and 2 one element at a time.

    Suppose the first element $e$ is to be deleted (contracted). If $e$ is not a self loop or a coloop, we delete (contract) it. If it is a self loop or coloop, contracting or deleting leads to the same matroid. A self loop contains no bonds, so we delete it. A coloop contains no circuits, so we contract it.
    
    The resulting matroid is bipartition complete with respect to the partition obtained from $\{S,Q\}$ by deleting element $e$ from the block that contains it. We repeat this process until all the elements not in the underlying set of the minor are dealt with by deletion or contraction as above. At every stage we have appropriate bipartition complete matroids, so that the final minor $\mathcal{M}_{S^2Q^2}$ is $\{S^2,Q^2\}$--complete.

\end{enumerate}
\end{proof}
The following corollary is immediate. We note that every matroid $\mathcal{M}_Q$ is trivially $\{Q,\emptyset\}$--complete.

\begin{corollary}
\label{cor:partitioncomplete}

Let $\mathcal{M}_{T}$  be $\{S_j,P_j\}-$ complete for every partition $\{S_j,P_j\} , j = 1, \dots, n,$ of $T.$
 We have the following.
\begin{enumerate}
\item Let $\mathcal{M}_Q,$  $Q \subseteq T,$ be a minor of $\mathcal{M}_{T}.$ Then $\mathcal{M}_Q$ is $\{S_j \cap Q, \, P_j \cap Q\}$--complete for $j = 1, \dots, n.$ 
\item Let $\Pi :\equiv \{Q_1, \dots, Q_k\}$ be a partition of $Q$ such that each $Q_m,Q_n, m\ne n$ is separated by some $\{S_j,P_j\}.$ Then every minor on $Q_m\uplus Q_n
, m\ne n$ is $\{Q_m,Q_n\}-$ complete. 
\end{enumerate}

\end{corollary}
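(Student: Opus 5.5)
Both parts follow directly from part 4 of Theorem~\ref{lem:sqrestrictcontract}, which says that completeness survives taking arbitrary minors once the bipartition is cut down to the ground set of the minor. The only work is bookkeeping: each minor of $\mathcal{M}_T$ must be written in the form $\mathcal{M}_T\circ(S^1\uplus Q^1)\times(S^2\uplus Q^2)$ relative to the right bipartition of $T$.

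For part 1, fix $j$ and regard $\mathcal{M}_T$ as a matroid on $S_j\uplus P_j$; it is $\{S_j,P_j\}$--complete by hypothesis. Write the minor as $\mathcal{M}_Q=\mathcal{M}_T\circ T_1\times Q$ with $Q\subseteq T_1\subseteq T$. Since deletions and contractions commute, every minor can be put in this form.

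Now set
\[
S^1=S_j\cap T_1,\quad P^1=P_j\cap T_1,\quad S^2=S_j\cap Q,\quad P^2=P_j\cap Q .
\]
These satisfy $S^2\subseteq S^1\subseteq S_j$ and $P^2\subseteq P^1\subseteq P_j$. Part 4 of Theorem~\ref{lem:sqrestrictcontract} then gives that $\mathcal{M}_Q$ is $\{S_j\cap Q,P_j\cap Q\}$--complete. Because $j$ was arbitrary, this holds for every $j=1,\dots,n$.

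There is one degenerate case: one block of the bipartition may be empty, for instance $S_j\cap Q=\emptyset$. There we invoke the remark that every matroid is trivially $\{Q,\emptyset\}$--complete, so nothing breaks.

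For part 2, take a minor $\mathcal{N}$ of $\mathcal{M}_Q$ on $Q_m\uplus Q_n$ with $m\ne n$. I read the hypothesis as saying that $Q_m\subseteq S_j$ and $Q_n\subseteq P_j$ for some $j$, or the reverse. The main point to check is that the statement intends exactly this: every element of $Q_m$ lies on one side of $\{S_j,P_j\}$ and every element of $Q_n$ on the other.

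A minor of a minor is again a minor, so $\mathcal{N}$ is a minor of $\mathcal{M}_T$. Applying part 1 with ground set $Q_m\uplus Q_n$, we get that $\mathcal{N}$ is complete with respect to
\[
\{S_j\cap(Q_m\uplus Q_n),\;P_j\cap(Q_m\uplus Q_n)\}=\{Q_m,Q_n\}.
\]
This gives the claim. If the hypothesis were read more weakly, say as requiring only that $Q_m$ and $Q_n$ not lie in a common block, the argument would fail, since the induced bipartition would then not equal $\{Q_m,Q_n\}$. So I would state the strong interpretation explicitly.
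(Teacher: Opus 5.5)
Your proposal is correct and takes the same approach as the paper, which calls the corollary immediate from part~4 of Theorem~\ref{lem:sqrestrictcontract} and adds the remark that every matroid is trivially $\{Q,\emptyset\}$--complete. Your explicit choice of $S^1,P^1,S^2,P^2$, and your reading of ``separated'' as $Q_m\subseteq S_j$, $Q_n\subseteq P_j$ (or the reverse), supply exactly the bookkeeping the paper leaves implicit.
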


\begin{remark}
We note that the union of two $\{S,Q\}$--complete matroids is not necessarily $\{S,Q\}$--complete. 

Let $S :\equiv \{s_1,s_2,s_3\}, \quad Q :\equiv \{q_1,q_2,q_3\}, \quad P :\equiv \{p\}.$ 
Let $U_{x,y}$ denote the uniform matroid on $y$ elements whose bases contain $x$ elements. Define $\mathcal{M}_{SP} :\equiv U_{2,4}, \quad \mathcal{M}_{PQ} :\equiv U_{3,4}, \quad \mathcal{M}^1_{SPQ} :\equiv \mathcal{M}_{SP} \oplus \mathbf{0}_Q,$ and $\mathcal{M}^2_{SPQ} :\equiv \mathbf{0}_{S} \oplus \mathcal{M}_{PQ}.$

Consider the bipartition $\{A,B\} :\equiv \{\{q_1,s_1,p\}, \, \{q_2,q_3,s_2,s_3\}\}.$ It is easy to see that both $\mathcal{M}^1_{SPQ}$ and $\mathcal{M}^2_{SPQ}$ are $\{A,B\}$--complete. 
But $\mathcal{M}^1_{SPQ} \vee \mathcal{M}^2_{SPQ}$ is not $\{A,B\}$--complete.
Consider $a_1 :\equiv \{q_1,p\}, \quad a_2 :\equiv \{s_1,p\}, \quad b_1 :\equiv \{q_2,q_3,s_3\}, \quad b_2 :\equiv \{q_3,s_2,s_3\}.$ It can be verified that $a_1 \uplus b_1, \, a_1 \uplus b_2, \, a_2 \uplus b_1$ are bases of $\mathcal{M}^1_{SPQ} \vee \mathcal{M}^2_{SPQ}$ but $a_2 \uplus b_2$ is not.
\end{remark}

A natural question that arises is whether $\mathcal{M}_{SP} \leftrightarrow_{\mathcal{M}} \mathcal{M}_{PQ}$ is $\{S,Q\}$--complete if $\mathcal{M}_{SP}$ and $\mathcal{M}_{PQ}$ are $\{S,P\}$--complete and $\{P,Q\}$--complete, respectively. We give a simple sufficient condition below. 

For vector spaces there is the ``implicit inversion theorem'' which states that if $\mathcal{V}_{SQ} = \mathcal{V}_{SP} \leftrightarrow \mathcal{V}_{PQ}$ and $\mathcal{V}_{SP} \circ P \supseteq \mathcal{V}_{PQ} \circ P, \quad \mathcal{V}_{SP} \times P \subseteq \mathcal{V}_{PQ} \times P,$ then $\mathcal{V}_{PQ} = \mathcal{V}_{SP} \leftrightarrow \mathcal{V}_{SQ}$ (\cite{HNarayanan1986a}, \cite{narayanan1987topological}, \cite{narayanan2016}). The theorem below contains an analogous version for bipartition complete matroids. 
 It also gives conditions for $\mathcal{M}_{SP}, \mathcal{M}_{PQ}$ to be strongly compatible over $P$ when $\mathcal{M}_{SP}, \mathcal{M}_{PQ}$ are compatible over $P$ and $\{S,P\}$--complete, $\{P,Q\}$--complete, respectively.

We begin with a simple lemma.

\begin{lemma}
\label{lem:blockunion}
If every block of $\mathcal{E}_P(\mathcal{M}_{PQ})$ is a union of blocks of $\mathcal{E}_P(\mathcal{M}^*_{SP}),$ then every base of $\mathcal{M}_{SP} \vee \mathcal{M}_{PQ}$ is a disjoint union of bases of $\mathcal{M}_{SP}$ and $\mathcal{M}_{PQ},$ and $(\mathcal{M}_{SP} \vee \mathcal{M}_{PQ}) \circ P = \mathcal{F}_P.$ 
\end{lemma}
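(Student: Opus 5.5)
The plan is to exhibit one base of $\mathcal{M}_{SP} \vee \mathcal{M}_{PQ}$ that is a disjoint union of a base of $\mathcal{M}_{SP}$ and a base of $\mathcal{M}_{PQ}$ covering $P$, and then use a cardinality argument to conclude the statement for every base. Throughout, I read the hypothesis as saying that the families partitioned by $\mathcal{E}_P(\mathcal{M}_{PQ})$ and by $\mathcal{E}_P(\mathcal{M}^*_{SP})$ coincide, and that each block of the former is a union of blocks of the latter. In particular, every admissible $P$-subset for $\mathcal{M}^*_{SP}$ is admissible for $\mathcal{M}_{PQ}$. This reading of the hypothesis is the step I expect to need the most care: one must make sure the block of $P-b_P$ used below really is covered by some block of $\mathcal{E}_P(\mathcal{M}_{PQ})$.

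\emph{Step 1 (the construction).} I choose any base $b_S \uplus b_P$ of $\mathcal{M}_{SP}$. Then $(S-b_S)\uplus(P-b_P)$ is a base of $\mathcal{M}^*_{SP}$. Hence $P-b_P$ contains a base of $\mathcal{M}^*_{SP}\times P$ and is contained in a base of $\mathcal{M}^*_{SP}\circ P$, so it lies in some block of $\mathcal{E}_P(\mathcal{M}^*_{SP})$. By hypothesis this block lies inside a block of $\mathcal{E}_P(\mathcal{M}_{PQ})$. Therefore $P-b_P$ is an element of that block, and there is some $b_Q\subseteq Q$ with $(P-b_P)\uplus b_Q$ a base of $\mathcal{M}_{PQ}$. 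The bases $b_S\uplus b_P$ and $(P-b_P)\uplus b_Q$ are disjoint, and their union $X$ covers $P$.

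\emph{Step 2 (cardinality).} Every set of the form $b^1\cup b^2$, with $b^1$ a base of $\mathcal{M}_{SP}$ and $b^2$ a base of $\mathcal{M}_{PQ}$, has size at most $r(\mathcal{M}_{SP})+r(\mathcal{M}_{PQ})$. The set $X$ attains this bound, so $X$ is a maximal such union and hence a base of $\mathcal{M}_{SP}\vee\mathcal{M}_{PQ}$. Since all bases of a matroid have the same size, every base of $\mathcal{M}_{SP}\vee\mathcal{M}_{PQ}$ has size $r(\mathcal{M}_{SP})+r(\mathcal{M}_{PQ})$. Every such base is of the form $b^1\cup b^2$, and this size forces $b^1\cap b^2=\emptyset$. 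So every base of the union is a disjoint union of bases of $\mathcal{M}_{SP}$ and $\mathcal{M}_{PQ}$.

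\emph{Step 3 (restriction to $P$).} Since the base $X$ contains $P$, the set $P$ is independent in $\mathcal{M}_{SP}\vee\mathcal{M}_{PQ}$. Hence $(\mathcal{M}_{SP}\vee\mathcal{M}_{PQ})\circ P=\mathcal{F}_P$.
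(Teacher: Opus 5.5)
Your Steps 2 and 3 are the paper's cardinality argument and are fine. Step 1, however, rests on your extra assumption that $\mathcal{E}_P(\mathcal{M}_{PQ})$ and $\mathcal{E}_P(\mathcal{M}^*_{SP})$ partition the same family, and the hypothesis does not give this. Since blocks of a partition are disjoint, ``every block of $\mathcal{E}_P(\mathcal{M}_{PQ})$ is a union of blocks of $\mathcal{E}_P(\mathcal{M}^*_{SP})$'' only says the following: each block of $\mathcal{E}_P(\mathcal{M}^*_{SP})$ either lies inside a single block of $\mathcal{E}_P(\mathcal{M}_{PQ})$, or contains no set admissible for $\mathcal{M}_{PQ}$ at all. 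The paper needs this weaker form. Part 3 of Theorem~\ref{thm:compatible} is derived from part 2, whose proof uses this lemma, and it only asks that the bases of $\mathcal{M}_P$ form a union of blocks of $\mathcal{E}_P(\mathcal{M}^*_{SP})$.

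Under the weaker form, starting from an arbitrary base of $\mathcal{M}_{SP}$ can fail. Take $S=\{s\}$, $P=\{p\}$, $Q=\emptyset$, $\mathcal{M}_{SP}=U_{1,2}$ and $\mathcal{M}_{PQ}=\mathcal{M}_P=\mathcal{F}_P$. The blocks of $\mathcal{E}_P(\mathcal{M}^*_{SP})$ are $\{\emptyset\}$ and $\{\{p\}\}$, while $\mathcal{E}_P(\mathcal{F}_P)$ is the single block $\{\{p\}\}$, so the hypothesis holds. Yet for the base $\{p\}$ of $\mathcal{M}_{SP}$ you get $P-b_P=\emptyset$, which is not a base of $\mathcal{F}_P$, so no $b_Q$ exists.

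The repair is to run Step 1 in the direction the hypothesis actually supports. Start from any base $b'_P\uplus b'_Q$ of $\mathcal{M}_{PQ}$. Then $b'_P$ is admissible for $\mathcal{M}_{PQ}$, so it lies in a block of $\mathcal{E}_P(\mathcal{M}_{PQ})$ and hence in a block of $\mathcal{E}_P(\mathcal{M}^*_{SP})$. In particular $b'_P$ is independent in $\mathcal{M}^*_{SP}\circ P$ and contains a base of $\mathcal{M}^*_{SP}\times P$. It therefore extends, using elements of $S$ only, to a base $X_S\uplus b'_P$ of $\mathcal{M}^*_{SP}$. Now $(S-X_S)\uplus(P-b'_P)$ is a base of $\mathcal{M}_{SP}$, disjoint from $b'_P\uplus b'_Q$, and together they cover $P$. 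With this pair your Steps 2 and 3 go through verbatim, and the argument becomes the paper's proof with its bare existence claim made explicit.
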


\begin{proof}
If every block of $\mathcal{E}_P(\mathcal{M}_{PQ})$ is a union of blocks of $\mathcal{E}_P(\mathcal{M}^*_{SP}),$ then there exist disjoint bases of the kind $b'_S \uplus b'_P$ and $(P - b'_P) \uplus b'_Q$ of $\mathcal{M}_{SP}$ and \/ $\mathcal{M}_{PQ},$ respectively. Since all bases of $\mathcal{M}_{SP} \vee \mathcal{M}_{PQ}$ have the same size, every base of $\mathcal{M}_{SP} \vee \mathcal{M}_{PQ}$ is a disjoint union of bases of $\mathcal{M}_{SP}$ and $\mathcal{M}_{PQ}.$ Further, $P$ is independent in the union so that $(\mathcal{M}_{SP} \vee \mathcal{M}_{PQ}) \circ P = \mathcal{F}_P.$
\end{proof}

\begin{theorem}
\label{thm:compatible}
Let matroids $\mathcal{M}_{SP}$ , $\mathcal{M}_{PQ}$ be $\{S,P\}$--complete , $\{P,Q\}$--complete, respectively.
\begin{enumerate}
    \item Let $\mathcal{E}_P(\mathcal{M}^*_{SP}) = \mathcal{E}_P(\mathcal{M}_{PQ}).$ Then the matroid $\mathcal{M}_{SQ} :\equiv \mathcal{M}_{SP} \leftrightarrow_{\mathcal{M}} \mathcal{M}_{PQ}$ is $\{S,Q\}$--complete with $\mathcal{E}_S(\mathcal{M}_{SP}) = \mathcal{E}_S(\mathcal{M}_{SQ})$ and $\mathcal{E}_Q(\mathcal{M}_{PQ}) = \mathcal{E}_Q(\mathcal{M}_{SQ}).$
    
    \item Let every block of $\mathcal{E}_P(\mathcal{M}_{PQ})$ be a union of blocks of $\mathcal{E}_P(\mathcal{M}^*_{SP}).$ Then $\mathcal{M}_{PQ} = \mathcal{M}^*_{SP} \leftrightarrow_{\mathcal{M}} \mathcal{M}_{SQ}.$
    
    \item If $Q = \emptyset,$ and the collection of bases of $\mathcal{M}_P$ is a union of blocks of $\mathcal{E}_P(\mathcal{M}^*_{SP}),$ then $\mathcal{M}_{P} = \mathcal{M}^*_{SP} \leftrightarrow_{\mathcal{M}} (\mathcal{M}_{SP} \leftrightarrow_{\mathcal{M}} \mathcal{M}_{P}).$
    
    \item Let $\mathcal{E}_P(\mathcal{M}^*_{SP}) = \mathcal{E}_P(\mathcal{M}_{PQ})$ and let $\mathcal{M}_{SP}, \, \mathcal{M}_{PQ}$ be compatible over $P.$ Then $\mathcal{M}_{SP}$ and $\mathcal{M}_{PQ}$ are strongly compatible over $P.$
\end{enumerate}
\end{theorem}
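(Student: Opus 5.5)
The plan is to work throughout with Lemma \ref{lem:blockunion} and Lemma \ref{lem:baseexchange}. Under the hypothesis of parts 1, 2 and 4, every base of $\mathcal{M}_{SP}\vee\mathcal{M}_{PQ}$ is a disjoint union of bases of $\mathcal{M}_{SP}$ and $\mathcal{M}_{PQ}$ covering $P$. Hence $b_S\uplus b_Q$ is a base of $\mathcal{M}_{SQ}$ iff there is $b_P\subseteq P$ with $b_S\uplus b_P$ a base of $\mathcal{M}_{SP}$ and $(P-b_P)\uplus b_Q$ a base of $\mathcal{M}_{PQ}$. Since $\mathcal{M}_{SP}$ is $\{S,P\}$--complete, so is $\mathcal{M}^*_{SP}$ (Lemma \ref{lem:spclosure}), and the blocks of $\mathcal{E}_P(\mathcal{M}^*_{SP})$ are exactly the complements $P-b_P$ of a block of $\mathcal{E}_P(\mathcal{M}_{SP})$. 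Thus base membership in $\mathcal{M}_{SQ}$ becomes a relational composition: $b_S\,R_{SQ}\,b_Q$ iff $b_S\,R_{SP}\,b_P$ and $(P-b_P)\,R_{PQ}\,b_Q$ for some $b_P$.

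For part 1, I would use the complete-bipartite-component picture. Write $R_{SP}$ as a union of products $\mathbf{I}^j_S\times\hat{\mathbf{I}}^j_P$, and $R_{PQ}$ as a union of products of blocks. Complementing in $P$ turns $\hat{\mathbf{I}}^j_P$ into a block of $\mathcal{E}_P(\mathcal{M}^*_{SP})$. By hypothesis this is exactly a block $\mathbf{J}^k_P$ of $\mathcal{E}_P(\mathcal{M}_{PQ})$, paired with a block $\hat{\mathbf{J}}^k_Q$. The composition is then $\bigcup_j \mathbf{I}^j_S\times\hat{\mathbf{J}}^{k(j)}_Q$, a disjoint union of complete bipartite pieces, which gives completeness. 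The maps $j\mapsto k(j)$ are injective because the blocks are disjoint, so the $S$-shores are exactly the blocks of $\mathcal{E}_S(\mathcal{M}_{SP})$ and the $Q$-shores those of $\mathcal{E}_Q(\mathcal{M}_{PQ})$. One has to check that no $S$-block gets lost. This holds because every block of $\mathcal{E}_P(\mathcal{M}^*_{SP})$ occurs as a block of $\mathcal{M}_{PQ}$, so each $\mathbf{I}^j_S$ has a partner.

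For part 2, I would compute $\mathcal{M}^*_{SP}\leftrightarrow_{\mathcal M}\mathcal{M}_{SQ}$ in the same way. First I need $\mathcal{M}^*_{SP}$ and $\mathcal{M}_{SQ}$ to meet the hypothesis of Lemma \ref{lem:blockunion} over $S$. The blocks of $\mathcal{E}_S(\mathcal{M}_{SQ})$ are unions of blocks of $\mathcal{E}_S(\mathcal{M}_{SP})$, by the composition description, now with unions instead of equalities, and taking complements gives the required block relation. Then $(P-b_P)\uplus b_Q$ is a base of the link iff some $b_S$ has $(S-b_S)\uplus(P-b_P)$ a base of $\mathcal{M}^*_{SP}$, i.e.\ $b_S\,R_{SP}\,b_P$, and also $b_S\,R_{SQ}\,b_Q$. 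Since $b_S\,R_{SQ}\,b_Q$ means $b_S\,R_{SP}\,b'_P$ and $(P-b'_P)\,R_{PQ}\,b_Q$ for some $b'_P$, we get that $b_P$ and $b'_P$ lie in the same block of $\mathcal{E}_P(\mathcal{M}_{SP})$. Their complements lie in one $\mathcal{E}_P(\mathcal{M}^*_{SP})$ block, hence in one $\mathcal{E}_P(\mathcal{M}_{PQ})$ block, so $(P-b_P)\,R_{PQ}\,b_Q$. The reverse inclusion is immediate: take $b_S$ with $b_S\,R_{SP}\,b_P$, which exists because every block of $\mathcal{M}_{PQ}$ meets $\mathcal{M}^*_{SP}$-blocks. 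Part 3 is the special case $Q=\emptyset$ of part 2, with $\mathcal{E}_P(\mathcal{M}_P)$ a single block.

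For part 4, take disjoint bases $b^3_P,b^4_P$ of $\mathcal{M}_{SP}\times P$ and $\mathcal{M}_{PQ}\times P$, and a base $b_S\uplus b_Q$ of $\mathcal{M}_{SQ}$. By the description above there is $b_P$ with $b_S\,R_{SP}\,b_P$ and $(P-b_P)\,R_{PQ}\,b_Q$. I would replace $b_P$ by a $b'_P\supseteq b^3_P$ with $P-b'_P\supseteq b^4_P$ in the same $\mathcal{E}_P(\mathcal{M}_{SP})$ block. I expect this to be the main obstacle. The plan is to use Lemma \ref{lem:partialorder} and minor-stability (Theorem \ref{lem:sqrestrictcontract}) on the minor obtained by contracting $b^3_P$ and deleting $b^4_P$. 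Compatibility gives that $b^3_P$ is independent in $\mathcal{M}_{SP}\circ P$ and that $P-b^4_P$ spans, and part 1 of Theorem \ref{thm:matroidminP} already supplies one such pair of bases. Within the block of $b_S$, I would then exchange $P$-parts using completeness: swap $b_P$ for the $P$-part of a base containing $b^3_P$ that is linked to the same $S$-block. Equality of the $\mathcal{E}_P$ partitions keeps the $Q$-side valid.
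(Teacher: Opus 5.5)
Parts 1--3 are fine. They are the paper's argument recast as relational composition: Lemma~\ref{lem:blockunion} turns each base of $\mathcal{M}_{SQ}$ into a pair of bases $b_S\uplus b_P$, $(P-b_P)\uplus b_Q$, and the rest is block-chasing through $\mathcal{E}_P(\mathcal{M}^*_{SP})$.

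\textbf{The gap is in part 4.} You correctly reduce strong compatibility to one claim: there is some $b'_P$ in the $\mathcal{E}_P(\mathcal{M}_{SP})$-block of $b_P$ with $b^3_P\subseteq b'_P\subseteq P-b^4_P$. You never show that such a $b'_P$ exists, and the tools you cite do not give it as stated.
\begin{itemize}
\item Part 1 of Theorem~\ref{thm:matroidminP} has strong compatibility as its hypothesis, so citing it here is circular.
\item Theorem~\ref{lem:sqrestrictcontract} only says that the trace of the block on the minor (contract $b^3_P$, delete $b^4_P$) is a block \emph{iff it is nonempty}. Nonemptiness is exactly what has to be proved.
\item Lemma~\ref{lem:partialorder} could supply it. The bases of $\mathcal{M}_{SP}\times P$ form the least block and those of $\mathcal{M}_{SP}\circ P$ the greatest. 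You would apply the lemma first in $\mathcal{M}_{SP}$, then in the complete contraction $\mathcal{M}_{SP}\times(S\uplus(P-b^3_P))$. None of this is set up in your sketch.
\end{itemize}

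The missing idea is simpler: keep $b_S$ fixed and augment. This is what the paper does.
\begin{itemize}
\item Since $b_S$ is the $S$-part of a base of $\mathcal{M}_{SP}$, it is independent in $\mathcal{M}_{SP}\circ S$ and contains a base of $\mathcal{M}_{SP}\times S$.
\item Hence $b_S\uplus b^3_P$ is independent in $\mathcal{M}_{SP}$, because $b^3_P$ is a base of $\mathcal{M}_{SP}\times P$.
\item Also $b_S\uplus(P-b^4_P)$ spans $\mathcal{M}_{SP}$. Compatibility makes $P-b^4_P$ a base of $(\mathcal{M}_{PQ}\times P)^*=(\mathcal{M}^*_{SP}\times P)^*=\mathcal{M}_{SP}\circ P$, and $b_S$ contains a base of $\mathcal{M}_{SP}\times S$.
\item So $b_S\uplus b^3_P$ extends, inside $b_S\uplus(P-b^4_P)$, to a base $b_S\uplus b'_P$ with $b^3_P\subseteq b'_P\subseteq P-b^4_P$.
\end{itemize}
Now $b'_P$ and $b_P$ both pair with $b_S$, so they lie in one block of $\mathcal{E}_P(\mathcal{M}_{SP})$. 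Your closing step then applies: their complements lie in one block of $\mathcal{E}_P(\mathcal{M}^*_{SP})=\mathcal{E}_P(\mathcal{M}_{PQ})$, so $(P-b'_P)\uplus b_Q$ is a base of $\mathcal{M}_{PQ}$. The paper closes instead via $\mathcal{E}_Q(\mathcal{M}_{SQ})=\mathcal{E}_Q(\mathcal{M}_{PQ})$ from part 1; either route works.
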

\begin{proof}
We will use Lemma \ref{lem:blockunion} repeatedly.
\begin{enumerate}
  \item     Let $b_S \uplus b_Q, \, b_S \uplus b'_Q, \, b'_S \uplus b_Q$ be bases of $\mathcal{M}_{SQ} :\equiv \mathcal{M}_{SP} \leftrightarrow_{\mathcal{M}} \mathcal{M}_{PQ}.$ We will show that $b'_S \uplus b'_Q$ is a base of $\mathcal{M}_{SQ}.$
    
    There must exist bases $b_S \uplus b_P, \, b_S \uplus b'_P$ of $\mathcal{M}_{SP}$ and bases $(P - b_P) \uplus b_Q, \, (P - b'_P) \uplus b'_Q$ of $\mathcal{M}_{PQ}.$ Now $b_P, b'_P$ must belong to the same block of $\mathcal{E}_P(\mathcal{M}_{SP}).$ Since $\mathcal{E}_P(\mathcal{M}_{PQ}) = \mathcal{E}_P(\mathcal{M}^*_{SP}),$ it follows that $(P - b_P)$ and $(P - b'_P)$ belong to the same block of $\mathcal{E}_P(\mathcal{M}_{PQ}),$ and therefore $b_Q, b'_Q$ belong to the same block of $\mathcal{E}_Q(\mathcal{M}_{PQ}).$
    
    Since $b'_S \uplus b_Q$ is a base of $\mathcal{M}_{SQ},$ there must exist bases $b'_S \uplus b''_P$ and $(P - b''_P) \uplus b_Q$ of $\mathcal{M}_{SP}$ and \/ $\mathcal{M}_{PQ},$ respectively. Since $(P - b'_P) \uplus b_Q$ and $(P - b''_P) \uplus b_Q$ are bases of $\mathcal{M}_{PQ},$ it follows that $(P - b'_P)$ and $(P - b''_P)$ belong to the same block of $\mathcal{E}_P(\mathcal{M}_{PQ}).$
    
    Since $\mathcal{E}_P(\mathcal{M}^*_{SP}) = \mathcal{E}_P(\mathcal{M}_{PQ}),$ we have $\mathcal{E}_P(\mathcal{M}_{SP}) = \mathcal{E}_P(\mathcal{M}^*_{PQ}).$ Therefore, we must have $b'_P, b''_P$ belonging to the same block of $\mathcal{E}_P(\mathcal{M}_{SP}).$ Since $b'_S \uplus b''_P$ is a base of $\mathcal{M}_{SP},$ it follows that $b'_S \uplus b'_P$ is a base of $\mathcal{M}_{SP}.$ Thus we have $b'_S \uplus b'_P$ and $(P - b'_P) \uplus b'_Q$ as bases of $\mathcal{M}_{SP}$ and \/ $\mathcal{M}_{PQ},$ respectively. Hence $b'_S \uplus b'_Q$ is a base of $\mathcal{M}_{SQ},$ as required.
    
    Next we note that $b_S \uplus b_Q$ is a base of $\mathcal{M}_{SQ}$ iff there exist bases $b_S \uplus b_P$ and $(P - b_P) \uplus b_Q$ of $\mathcal{M}_{SP}$ and \/ $\mathcal{M}_{PQ},$ respectively. This happens iff $b_S, b_P$ belong to corresponding blocks of $\mathcal{E}_S(\mathcal{M}_{SP})$ and \/ $\mathcal{E}_P(\mathcal{M}_{SP}),$ respectively, and $(P - b_P), b_Q$ belong to corresponding blocks of $\mathcal{E}_P(\mathcal{M}_{PQ})$ and \/ $\mathcal{E}_Q(\mathcal{M}_{PQ}),$ respectively. Now $\mathcal{E}_P(\mathcal{M}_{SP}) = \mathcal{E}_P(\mathcal{M}^*_{PQ}).$ Therefore $\mathcal{E}_S(\mathcal{M}_{SP}) = \mathcal{E}_S(\mathcal{M}_{SQ})$ and $\mathcal{E}_Q(\mathcal{M}_{PQ}) = \mathcal{E}_Q(\mathcal{M}_{SQ}).$

    \item   Let $b_P \uplus b_Q$ be a base of $\mathcal{M}_{PQ}.$ Since every block of $\mathcal{E}_P(\mathcal{M}_{PQ})$ is a union of blocks of $\mathcal{E}_P(\mathcal{M}^*_{SP}),$ it follows that there exists a base $b_S \uplus b_P$ of $\mathcal{M}^*_{SP}.$ Since $b_P \uplus b_Q$ and $(S - b_S) \uplus (P - b_P)$ are bases of $\mathcal{M}_{PQ}$ and \/ $\mathcal{M}_{SP},$ respectively, it follows that $(S - b_S) \uplus b_Q$ is a base of $\mathcal{M}_{SQ} = \mathcal{M}_{SP} \leftrightarrow_{\mathcal{M}} \mathcal{M}_{PQ}.$
    
    Now $(S - b_S) \uplus b_Q$ and $b_S \uplus b_P$ are bases of $\mathcal{M}_{SQ}$ and \/ $\mathcal{M}^*_{SP},$ respectively, so that $b_P \uplus b_Q$ is a base of $ \mathcal{M}^*_{SP} \leftrightarrow_{\mathcal{M}} \mathcal{M}_{SQ}.$

Next let $b_P\uplus b_Q$ be a base of $\mathcal{M}^*_{SP} \leftrightarrow_{\mathcal{M}} \mathcal{M}_{SQ}.$
We saw earlier that there are disjoint bases of $\mbox{$\cal M$}_{SQ}, \mbox{$\cal M$}^*_{SP},$
which cover $S$
so that every base of $\mbox{$\cal M$}_{SQ}\vee \mbox{$\cal M$}^*_{SP}$  is a disjoint union of bases of $\mbox{$\cal M$}_{SQ}, \mbox{$\cal M$}^*_{SP}$ and $(\mbox{$\cal M$}_{SQ}\vee \mbox{$\cal M$}^*_{SP})\circ S=\mbox{$\cal F$}_S.$
Therefore there exist bases $b_S\uplus b_Q,(S-b_S)\uplus b_P$
 of $\mbox{$\cal M$}_{SQ}, \mbox{$\cal M$}^*_{SP},$
respectively. Now $\mbox{$\cal M$}_{SQ}= \mbox{$\cal M$}_{SP}\leftrightarrow_{\mathcal M} \mbox{$\cal M$}_{PQ}$ and by the same argument as  above, it follows that every base of $\mbox{$\cal M$}_{SP}\vee \mbox{$\cal M$}_{PQ}$ is a  disjoint union of bases of $\mbox{$\cal M$}_{PQ}, \mbox{$\cal M$}_{SP}$
 and $(\mbox{$\cal M$}_{SP}\vee \mbox{$\cal M$}_{PQ})\circ P=\mbox{$\cal F$}_P.$
 Therefore there exist bases  $b_S\uplus (P-b'_P),b'_P\uplus b_Q$
 of $\mbox{$\cal M$}_{SP}, \mbox{$\cal M$}_{PQ},$ respectively and a base $(S-b_S)\uplus b'_P$ of
 $\mbox{$\cal M$}^*_{SP}.$
Now $(S-b_S)\uplus b_P, (S-b_S)\uplus b'_P$ are bases of $\mbox{$\cal M$}^*_{SP}.$
It follows that $b_P, b'_P,$ belong to the same block of $\mbox{$\cal E$}_P(\mbox{$\cal M$}^*_{SP}).$
Since every block of $\mbox{$\cal E$}_P(\mbox{$\cal M$}_{PQ})$ is a union of blocks of $\mbox{$\cal E$}_P(\mbox{$\cal M$}^*_{SP}),$
we must have that
$b_P, b'_P,$ belong to the same block of $\mbox{$\cal E$}_P(\mbox{$\cal M$}_{PQ}).$
Since $b'_P\uplus b_Q$ is a base of $\mbox{$\cal M$}_{PQ},$ it follows that $b_P\uplus b_Q$
 is a base of $\mbox{$\cal M$}_{PQ}.$

Thus $\mbox{$\cal M$}_{PQ}= \mbox{$\cal M$}^*_{SP}\leftrightarrow_{\mathcal M} \mbox{$\cal M$}_{SQ}.$

\item 
The result follows trivially from part 2 above.

\item  Let $b_P^3$ be a base of ${\cal M}_{SP}\times P.$ Then there exists a  base $\hat{b}_P$ of
 ${\cal M}_{SP}\circ P,$ that contains $b_P^3. $ Now $b^4_P:\equiv  P-\hat{b}_P$ is a base of
 $({\cal M}_{SP}\circ P)^*= {\cal M}_{SP}^*\times P= {\cal M}_{PQ}\times P$ (using compatibility over $P$ of  ${\cal M}_{SP},{\cal M}_{PQ}$).
 Thus there exist disjoint bases $b^3_P,b^4_P$ of ${\cal M}_{SP}\times P, {\cal M}_{PQ}\times P,$
 respectively  and bases $P-b^4_P, P-b^3_P$  of ${\cal M}_{SP}\circ P,{\cal M}_{PQ}\circ  P,$  respectively such that $P-b^4_P\supseteq b^3_P, P-b^3_P\supseteq b^4_P.$ 

Next let $b^3_P,b^4_P$ be disjoint bases of ${\cal M}_{SP}\times P, {\cal M}_{PQ}\times P,$
 respectively and let
$b_S\uplus b'_P$ be a base of ${\cal M}_{SP}.$ 
 Then $b_S\uplus b^3_P$ is independent in ${\cal M}_{SP}$ 
 and since $P-b^4_P$ is a  base of ${\cal M}_{SP}\circ P$  , $b_S\uplus (P-b^4_P)$ is a spanning set of ${\cal M}_{SP}.$
 Therefore $b_S\uplus b^3_P$ can be extended to a base $b_S\uplus b_P$ of ${\cal M}_{SP},$ using only the elements of $(P-b^4_P),$ i.e., $b_S\uplus b^3_P\subseteq  b_S\uplus b_P\subseteq b_S\uplus (P-b^4_P)$ and $b^4_P\subseteq (P-b_P)\subseteq (P-b^3_P).$

 Let $b_P$ belong to a block $I^k_P$ of ${\cal M}_{SP}.$
 Since $\mbox{$\cal E$}_P({\cal M}^*_{SP})= \mbox{$\cal E$}_P({\cal M}_{PQ}),$ 
$(P-b_P)$ belongs to a block $\tilde{I}^k_P$ of $\mbox{$\cal E$}_P({\cal M}_{PQ}).$

Let $b_S\uplus b_Q$ be a base of ${\cal M}_{SQ}:\equiv  {\cal M}_{SP}\leftrightarrow_{\mathcal M} {\cal M}_{PQ}.$
 Now $(P-b_P)\uplus b'_Q$ is a base of ${\cal M}_{PQ}$ for some $b'_Q.$
 Therefore $(b_S\uplus b_P)\uplus (b'_Q\uplus (P-b_P))$
 is a base of ${\cal M}_{SP}\vee {\cal M}_{PQ}$ and $b_S\uplus b'_Q$ is a base of
 ${\cal M}_{SP}\leftrightarrow_{\mathcal M} {\cal M}_{PQ}:\equiv  ({\cal M}_{SP}\vee {\cal M}_{PQ})\times (S\uplus Q).$
It follows that $b'_Q, b_Q$ belong to the same block of $\mbox{$\cal E$}_Q({\cal M}_{SQ}).$
 Now $\mbox{$\cal E$}_Q({\cal M}_{SQ})=\mbox{$\cal E$}_Q({\cal M}_{PQ})$ by part 1 above. Therefore $b'_Q, b_Q$ belong to the same block of $\mbox{$\cal E$}_Q({\cal M}_{PQ}).$
 Therefore $(P-b_P)\uplus b_Q$ is base of ${\cal M}_{PQ}.$
 Thus we have disjoint bases $b_S\uplus b_P, (P-b_P)\uplus b_Q$ of
${\cal M}_{SP},{\cal M}_{PQ},$ respectively such that $b_P\supseteq b^3_P, (P-b_P)\supseteq b_P^4,$ i.e., ${\cal M}_{SP},{\cal M}_{PQ}$ are strongly compatible over $P.$
\end{enumerate}
\end{proof}
\begin{remark}
We note that from part 1 of the above theorem, it follows that $\mathcal{M}^*_{SQ} \leftrightarrow_{\mathcal{M}} (\mathcal{M}_{SQ})_{SQ'}$ is $\{Q,Q'\}$--complete if $\mathcal{M}_{SQ}$ is $\{S,Q\}$--complete (part 4 of Theorem \ref{thm:pseudoid}).
\end{remark}

We are now ready to show that $\{S,Q\}$--complete matroids are minimally decomposable into matroids which are themselves complete with respect to suitable partitions of underlying sets.

\begin{theorem}
\label{thm:sqcloseddecomposition}
Let $\mathcal{M}_{SQ}$ be $\{S,Q\}$--complete. Let $P :\equiv Q'.$ We have the following.
\begin{enumerate}
    \item $\mathcal{M}_{SQ} = (\mathcal{M}_{SQ})_{SQ'} \leftrightarrow_{\mathcal{M}} ((\mathcal{M}_{SQ})_{S'Q} \leftrightarrow_{\mathcal{M}} (\mathcal{M}^*_{SQ})_{S'Q'}).$ Therefore we have $\mathcal{M}_{SQ} = \mathcal{M}_{SP} \leftrightarrow_{\mathcal{M}} \mathcal{M}_{PQ},$ where $\mathcal{M}_{SP} :\equiv (\mathcal{M}_{SQ})_{SQ'}$ and $\mathcal{M}_{PQ} :\equiv ((\mathcal{M}_{SQ})_{S'Q} \leftrightarrow_{\mathcal{M}} (\mathcal{M}^*_{SQ})_{S'Q'}).$ 
    
    \item $\mathcal{M}_{SP} \circ P = \mathcal{M}_{PQ}^* \circ P$ and $\mathcal{M}_{SP} \times P = \mathcal{M}_{PQ}^* \times P.$ Further, $\mathcal{E}_S(\mathcal{M}_{SQ}) = \mathcal{E}_S(\mathcal{M}_{SP}), \quad \mathcal{E}_Q(\mathcal{M}_{SQ}) = \mathcal{E}_Q(\mathcal{M}_{PQ}),$ and $\mathcal{E}_P(\mathcal{M}^*_{SP}) = \mathcal{E}_P(\mathcal{M}_{PQ}).$ Therefore, $\mathcal{M}_{SP}$ and $\mathcal{M}_{PQ}$ are strongly compatible over $P.$

    \item Let $\hat{P}$ be defined starting from $P$ as in Theorem \ref{thm:matroidminP}, so that we have the minimal decomposition $\mathcal{M}_{SQ} = \mathcal{M}_{S\hat{P}} \leftrightarrow_{\mathcal{M}} \mathcal{M}_{\hat{P}Q},$ with 
$|\hat{P}| = \lambda_{SQ}(\mathcal{M}_{SQ}).$
Then $\mathcal{M}_{S\hat{P}}$ and $\mathcal{M}_{\hat{P}Q}$ are respectively $\{S,\hat{P}\}$--complete and $\{\hat{P},Q\}$--complete. Further, $\mathcal{E}_S(\mathcal{M}_{SQ}) = \mathcal{E}_S(\mathcal{M}_{S\hat{P}}), \quad \mathcal{E}_Q(\mathcal{M}_{SQ}) = \mathcal{E}_Q(\mathcal{M}_{\hat{P}Q}),$ and $\mathcal{E}_{\hat P}(\mathcal{M}^*_{S\hat{P}}) = \mathcal{E}_{\hat P}(\mathcal{M}_{\hat{P}Q}).$
\end{enumerate}
\end{theorem}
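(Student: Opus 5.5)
For part 1, I would start from Theorem \ref{thm:pseudoid}. Because $\mathcal{M}_{SQ}$ is $\{S,Q\}$--complete, part 2 of that theorem says it is its own $\{S,Q\}$--augmentation. Part 1 of that theorem then gives
\[\mathcal{M}_{SQ}=(\mathcal{M}_{SQ})_{SQ'} \leftrightarrow_{\mathcal{M}} [(\mathcal{M}^*_{SQ})_{S'Q'} \leftrightarrow_{\mathcal{M}} (\mathcal{M}_{SQ})_{S'Q}].\]
This is exactly the claimed identity once $P:\equiv Q'$ is substituted and the inner linking is rewritten in the order $(\mathcal{M}_{SQ})_{S'Q}\leftrightarrow_{\mathcal{M}}(\mathcal{M}^*_{SQ})_{S'Q'}$. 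Linking is symmetric in its two arguments, since $\vee$ is.

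For part 2, I would observe that $\mathcal{M}_{PQ}$ is a copy of $\mathcal{M}_{QQ'}:\equiv \mathcal{M}^*_{SQ}\leftrightarrow_{\mathcal{M}}(\mathcal{M}_{SQ})_{SQ'}$, with the roles of $Q$ and $Q'$ flipped. In other words, $\mathcal{M}_{PQ}=(\mathcal{M}_{QQ'})_{Q'Q}$, which by Lemma \ref{lem:pseudoid0} part 1 equals $\mathcal{M}^*_{QQ'}$.

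The compatibility conditions then follow from Lemma \ref{lem:pseudoid0} part 2. First, $\mathcal{M}_{PQ}^*\circ P=\mathcal{M}_{QQ'}\circ Q'$ (read on $P=Q'$). Second, $\mathcal{M}_{SP}\circ P=(\mathcal{M}_{SQ}\circ Q)_{Q'}$. To match these I need $\mathcal{M}_{QQ'}\circ Q'=(\mathcal{M}_{SQ}\circ Q)_{Q'}$. I would obtain this by applying Lemma \ref{lem:pseudoid0} part 2 to the flipped copy, or directly via Lemma \ref{cor:minorspecial} applied to the compatible pair $\mathcal{M}^*_{SQ}$, $(\mathcal{M}_{SQ})_{SQ'}$ over $S$. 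Such a pair is compatible trivially, since $(\mathcal{M}^*_{SQ}\circ S)$ and $((\mathcal{M}_{SQ})^*\circ S)$ coincide. The contraction equality is handled the same way.

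For the block equalities, Theorem \ref{thm:pseudoid} part 4 gives $\mathcal{E}_{Q'}(\mathcal{M}_{QQ'})=\mathcal{E}_{Q'}((\mathcal{M}_{SQ})_{SQ'})$ and $\mathcal{E}_Q(\mathcal{M}_{QQ'})=\mathcal{E}_Q(\mathcal{M}^*_{SQ})$. Translating through $\mathcal{M}_{PQ}=\mathcal{M}^*_{QQ'}$ and complementation (blocks of a dual are complements of blocks), I get $\mathcal{E}_P(\mathcal{M}_{PQ})=\mathcal{E}_P(\mathcal{M}^*_{SP})$. Here $\mathcal{M}_{SP}$ is $\{S,P\}$--complete because it is a copy of $\mathcal{M}_{SQ}$. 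Also $\mathcal{M}_{PQ}$ is $\{P,Q\}$--complete by Theorem \ref{thm:pseudoid} part 4 together with Lemma \ref{lem:spclosure} part 1. Theorem \ref{thm:compatible} part 1 then yields $\mathcal{E}_S(\mathcal{M}_{SQ})=\mathcal{E}_S(\mathcal{M}_{SP})$ and $\mathcal{E}_Q(\mathcal{M}_{SQ})=\mathcal{E}_Q(\mathcal{M}_{PQ})$. Theorem \ref{thm:compatible} part 4 gives strong compatibility.

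The main obstacle is keeping the copy/flip bookkeeping straight, that is, which of $Q,Q'$ plays $P$ and where duals enter. I would verify each identification on a single base to be safe.

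For part 3, Theorem \ref{thm:matroidminP} applies because of the strong compatibility from part 2, and gives the minimal decomposition with $|\hat P|=\lambda_{SQ}$. The factors $\mathcal{M}_{S\hat P}$ and $\mathcal{M}_{\hat PQ}$ are minors of the complete matroids $\mathcal{M}_{SP}$ and $\mathcal{M}_{PQ}$, so Theorem \ref{lem:sqrestrictcontract} part 4 gives their completeness.

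For the blocks, I would use Theorem \ref{lem:sqrestrictcontract} part 3 with $S$ untouched. The deleted set $b^4_P$ lies in a cobase of $\mathcal{M}_{SP}$ and so contains no bonds. The contracted set $b^3_P$ lies in a base and so contains no circuits. Hence the $S$-blocks survive, and every block has nonempty trace by part 1 of Theorem \ref{thm:matroidminP}. This gives $\mathcal{E}_S(\mathcal{M}_{S\hat P})=\mathcal{E}_S(\mathcal{M}_{SP})=\mathcal{E}_S(\mathcal{M}_{SQ})$, and $\mathcal{E}_Q$ is handled symmetrically.

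Finally, $\mathcal{E}_{\hat P}(\mathcal{M}^*_{S\hat P})=\mathcal{E}_{\hat P}(\mathcal{M}_{\hat PQ})$. I would argue this through the base correspondence: $b_S\uplus b_Q$ is a base of $\mathcal{M}_{SQ}$ exactly when $b_S\uplus b^1_{\hat P}$ and $b_Q\uplus(\hat P-b^1_{\hat P})$ are bases of the two factors (Theorem \ref{thm:matroidminP} part 3 and Lemma \ref{lem:baseexchange}). I would then chase the bijections of Lemma \ref{lem:spclosure} part 3, as in the proof of Theorem \ref{thm:compatible} part 1.
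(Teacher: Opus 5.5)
Your proposal is correct. Parts 1 and 2, and the completeness and $\mathcal{E}_S$, $\mathcal{E}_Q$ claims of part 3, follow the paper's proof. The only variation there is in part 2. You obtain $\mathcal{E}_P(\mathcal{M}^*_{SP})=\mathcal{E}_P(\mathcal{M}_{PQ})$ from Theorem~\ref{thm:pseudoid} part 4, via $\mathcal{M}_{PQ}=\mathcal{M}^*_{QQ'}$ and complementation. The paper instead applies Theorem~\ref{thm:compatible} part 1 directly to $(\mathcal{M}_{QQ'})_{Q'Q}$. These amount to the same fact (see the remark after Theorem~\ref{thm:compatible}).

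\textbf{Where you differ: the last identity.} The genuinely different step is $\mathcal{E}_{\hat P}(\mathcal{M}^*_{S\hat P})=\mathcal{E}_{\hat P}(\mathcal{M}_{\hat PQ})$.

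The paper transports the $P$-level identity downward. Strong compatibility lets every block be represented by some $b_P\supseteq\tilde{b}^3_P$ with $P-b_P\supseteq\tilde{b}^4_P$. Removing $\tilde{b}^3_P$ and $\tilde{b}^4_P$ then matches blocks at the two levels.

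You work only at the $\hat P$ level, using completeness of $\mathcal{M}_{SQ}$ and the already-proved equalities of $S$- and $Q$-blocks. In effect this is a converse of Theorem~\ref{thm:compatible} part 1, and it does go through:
\begin{enumerate}
\item Since $\hat P$ lies in a base and a cobase of both factors, every $X\subseteq\hat P$ has bases $b_S\uplus X$ of $\mathcal{M}_{S\hat P}$ and $b_Q\uplus(\hat P-X)$ of $\mathcal{M}_{\hat PQ}$.
\item These bases are disjoint, so their union is a base of $\mathcal{M}_{S\hat P}\vee\mathcal{M}_{\hat PQ}$ containing $\hat P$. Since $\mathcal{M}_{SQ}=\mathcal{M}_{S\hat P}\leftrightarrow_{\mathcal{M}}\mathcal{M}_{\hat PQ}$, it follows that $b_S\uplus b_Q$ is a base of $\mathcal{M}_{SQ}$.
\item Completeness of $\mathcal{M}_{SQ}$ then forces the block of $X$ in $\mathcal{M}_{S\hat P}$ and the block of $\hat P-X$ in $\mathcal{M}_{\hat PQ}$ to carry the same index, namely the common block pair of $\mathcal{E}_S(\mathcal{M}_{SQ})$, $\mathcal{E}_Q(\mathcal{M}_{SQ})$.
\item Hence complementation carries $\mathcal{E}_{\hat P}(\mathcal{M}_{S\hat P})$ onto $\mathcal{E}_{\hat P}(\mathcal{M}_{\hat PQ})$.
\end{enumerate}
Your route isolates a clean general principle and avoids the lifting bookkeeping. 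The paper's route stays with the factors and reuses the $P$-level identity directly. Your sketch ("chase the bijections") leaves all of this to the reader, so you should write the chase out.

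\textbf{One justification needs repair.} The nonempty-trace hypothesis of Theorem~\ref{lem:sqrestrictcontract} part 3 does not follow from part 1 of Theorem~\ref{thm:matroidminP}. That result produces a single pair of bases, hence a suitable representative for only one block. For each block of $\mathcal{E}_P(\mathcal{M}_{SP})$, argue as follows:
\begin{enumerate}
\item Take a base $b_S\uplus b_P$ with $b_P$ in the block.
\item Using $\mathcal{E}_P(\mathcal{M}^*_{SP})=\mathcal{E}_P(\mathcal{M}_{PQ})$, take a base $(P-b_P)\uplus b_Q$ of $\mathcal{M}_{PQ}$. Then $b_S\uplus b_Q$ is a base of $\mathcal{M}_{SQ}$.
\item Strong compatibility gives a base $b_S\uplus b^1_P$ with $b^3_P\subseteq b^1_P\subseteq P-b^4_P$.
\item $b^1_P$ lies in the block of $b_P$, because $\mathcal{M}_{SP}$ is complete and both sets pair with $b_S$.
\end{enumerate}
The same argument handles $\mathcal{M}_{PQ}$. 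This is essentially what the paper's final paragraph does.
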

\begin{proof}
\begin{enumerate}
    \item By part 1 of Theorem \ref{thm:pseudoid}, $(\mathcal{M}_{SQ})_{SQ'} \leftrightarrow_{\mathcal{M}} ((\mathcal{M}_{SQ})_{S'Q} \leftrightarrow_{\mathcal{M}} (\mathcal{M}^*_{SQ})_{S'Q'})$ is the $\{S,Q\}$--augmentation of $\mathcal{M}_{SQ}.$ Therefore, if $\mathcal{M}_{SQ}$ is already $\{S,Q\}$--complete, we must have $\mathcal{M}_{SQ} = (\mathcal{M}_{SQ})_{SQ'} \leftrightarrow_{\mathcal{M}} ((\mathcal{M}_{SQ})_{S'Q} \leftrightarrow_{\mathcal{M}} (\mathcal{M}^*_{SQ})_{S'Q'}).$

    \item We have $\mathcal{M}_{SP} \circ P = (\mathcal{M}_{SQ})_{SQ'} \circ Q'.$ Furthermore, we find:
\begin{align*}
        \mathcal{M}_{PQ}^* \circ P &= ((\mathcal{M}_{SQ})_{S'Q} \leftrightarrow_{\mathcal{M}} (\mathcal{M}^*_{SQ})_{S'Q'})^* \circ Q' \\
        &= ((\mathcal{M}^*_{SQ})_{S'Q} \leftrightarrow_{\mathcal{M}} (\mathcal{M}_{SQ})_{S'Q'}) \circ Q' \quad \text{(by Theorem \ref{thm:idtmatroid})} \\
        &= (\mathcal{M}_{SQ})_{S'Q'} \circ Q' \quad \text{(by Lemma \ref{cor:minorspecial})}\\
&= (\mathcal{M}_{SQ})_{SQ'} \circ Q'.
    \end{align*}
    Therefore $\mathcal{M}_{SP} \circ P = \mathcal{M}_{PQ}^* \circ P.$ The proof that $\mathcal{M}_{SP} \times P = \mathcal{M}_{PQ}^* \times P$ is similar, using the same results.
 Thus $\mathcal{M}_{SP},\mathcal{M}_{PQ}$ are compatible over $P.$

    Next, by part 1 of Theorem \ref{thm:compatible}, $(\mathcal{M}_{QQ'})_{Q'Q} :\equiv [(\mathcal{M}^*_{SQ} \leftrightarrow_{\mathcal{M}} (\mathcal{M}_{SQ})_{SQ'})]_{Q'Q} = (\mathcal{M}_{SQ})_{S'Q} \leftrightarrow_{\mathcal{M}} (\mathcal{M}^*_{SQ})_{S'Q'}$ is $\{Q,Q'\}$--complete with $\mathcal{E}_Q((\mathcal{M}_{QQ'})_{Q'Q}) = \mathcal{E}_Q((\mathcal{M}_{SQ})_{S'Q})$ and $\mathcal{E}_{Q'}((\mathcal{M}_{QQ'})_{Q'Q}) = \mathcal{E}_{Q'}((\mathcal{M}^*_{SQ})_{S'Q'}).$
 
    By part 1 above, $\mathcal{M}_{SQ} = \mathcal{M}_{SP} \leftrightarrow_{\mathcal{M}} \mathcal{M}_{PQ} = (\mathcal{M}_{SQ})_{SQ'} \leftrightarrow_{\mathcal{M}} [(\mathcal{M}_{SQ})_{S'Q} \leftrightarrow_{\mathcal{M}} (\mathcal{M}^*_{SQ})_{S'Q'}].$ Thus:
    \begin{align*}
        \mathcal{E}_S(\mathcal{M}_{SP}) &= \mathcal{E}_S((\mathcal{M}_{SQ})_{SQ'}) = \mathcal{E}_S(\mathcal{M}_{SQ}), \\
        \mathcal{E}_Q(\mathcal{M}_{SQ}) &= \mathcal{E}_Q((\mathcal{M}_{SQ})_{S'Q}) = \mathcal{E}_Q((\mathcal{M}_{QQ'})_{Q'Q}) = \mathcal{E}_Q(\mathcal{M}_{PQ}), \\
        \mathcal{E}_P(\mathcal{M}^*_{SP}) &= \mathcal{E}_{Q'}((\mathcal{M}^*_{SQ})_{S'Q'}) = \mathcal{E}_{Q'}((\mathcal{M}_{QQ'})_{Q'Q}) = \mathcal{E}_P(\mathcal{M}_{PQ}).
    \end{align*}
    The strong compatibility of $\mathcal{M}_{SP}$ and $\mathcal{M}_{PQ}$ over $P$ now follows by part 4 of Theorem \ref{thm:compatible}.

    \item We have $\mathcal{M}_{SQ} = \mathcal{M}_{S\hat{P}} \leftrightarrow_{\mathcal{M}} \mathcal{M}_{\hat{P}Q},$ where $\hat{P} :\equiv P - \tilde{b}^3_P - \tilde{b}^4_P,$ and the components are defined as $\mathcal{M}_{S\hat{P}} :\equiv \mathcal{M}_{SP} \times (S \uplus (P - \tilde{b}^3_P)) \circ (S \uplus \hat{P})$ and $\mathcal{M}_{\hat{P}Q} :\equiv \mathcal{M}_{PQ} \times (Q \uplus (P - \tilde{b}^4_P)) \circ (Q \uplus \hat{P}).$

    Using part 4 of Theorem \ref{lem:sqrestrictcontract}, we see that $\mathcal{M}_{S\hat{P}}$ and $\mathcal{M}_{\hat{P}Q}$ are respectively $\{S,\hat{P}\}$--complete and $\{\hat{P},Q\}$--complete. From part 2 above,  $\mathcal{E}_S(\mathcal{M}_{SQ}) = \mathcal{E}_S(\mathcal{M}_{SP})$
%$ = \mathcal{E}_S(\mathcal{M}_{S\hat{P}})$ and $\mathcal{E}_Q(\mathcal{M}_{SQ}) = 
 and $\mathcal{E}_Q(\mathcal{M}_{PQ}) = \mathcal{E}_Q(\mathcal{M}_{SQ}).$ 
 From  part 3 of Theorem \ref{lem:sqrestrictcontract}, $ \mathcal{E}_S(\mathcal{M}_{SP})= \mathcal{E}_S(\mathcal{M}_{S\hat{P}})$ and $ \mathcal{E}_Q(\mathcal{M}_{PQ})= \mathcal{E}_Q(\mathcal{M}_{\hat{P}Q}).$ 
%We will now show that $\mathcal{E}_{\hat{P}}(\mathcal{M}^*_{S\hat{P}}) = \mathcal{E}_{\hat{P}}(\mathcal{M}_{\hat{P}Q}).$

Let ${\mathbf{I}}^j_{{P}}$ and $({\mathbf{I}}^*)^j_{{P}}$ be corresponding blocks of $\mathcal{E}_{{P}}(\mathcal{M}_{S{P}})$ and $\mathcal{E}_{{P}}(\mathcal{M}^*_{S{P}})$ such that $b_{{P}}$ and ${P} - b_{{P}}$ belong respectively to ${\mathbf{I}}^j_{{P}}$ and $({\mathbf{I}}^*)^j_{{P}}.$ Since 
 $\mathcal{E}_P(\mathcal{M}^*_{SP}) = \mathcal{E}_P(\mathcal{M}_{PQ}),$
 $({\mathbf{I}}^*)^j_{{P}}$ is a block of $\mathcal{E}_P(\mathcal{M}_{PQ}).$
 Therefore there exist bases $b_S\uplus b_P, b_Q\uplus ({P} - b_{{P}})$ of $\mathcal{M}_{S{P}}, \mathcal{M}_{{P}Q},$ respectively.
 By strong compatibility over $P$ of $\mathcal{M}_{S{P}}, \mathcal{M}_{{P}Q},$ 
 we may take $\tilde{b}^3_P, \tilde{b}^4_P$ to be subsets of $ b_P,  ({P} - b_{{P}}),$ respectively.
 Therefore $b_S\uplus (b_P- \tilde{b}^3_P), b_Q\uplus ({P} - b_{{P}})- \tilde{b}^4_P$ 
 are bases of  $\mathcal{M}_{S\hat {P}}, \mathcal{M}_{\hat{P}Q},$ respectively.
  We note that $\hat{P}-(b_P- \tilde{b}^3_P)= P-(\tilde{b}^3_P\uplus \tilde{b}^4_P)- (b_P- \tilde{b}^3_P)= [({P} - b_{{P}})- \tilde{b}^4_P]. $
 Thus there is a unique pair of blocks  ${\mathbf{I}}^j_{\hat{P}}$ and $({\mathbf{I}}^*)^j_{\hat{P}}$ of $\mathcal{M}_{S\hat {P}}, \mathcal{M}_{\hat{P}Q},$ 
 corresponding to ${\mathbf{I}}^j_{{P}}$ and $({\mathbf{I}}^*)^j_{{P}}$
 such that if  $b_{{P}},({P} - b_{{P}})$ belong respectively to ${\mathbf{I}}^j_{{P}},({\mathbf{I}}^*)^j_{{P}},$ with  $b_{{P}}\supseteq \tilde{b}^3_P$ and $({P} - b_{{P}})\supseteq \tilde{b}^4_P,$ then the subsets $b_{{P}}- \tilde{b}^3_P,$  $({P} - b_{{P}})- \tilde{b}^4_P$ belong respectively to  ${\mathbf{I}}^j_{\hat{P}},$  $({\mathbf{I}}^*)^j_{\hat{P}}.$   

On the other hand we know that $\mathcal{E}_S(\mathcal{M}_{SP})= \mathcal{E}_S(\mathcal{M}_{S\hat{P}})$ and $\mathcal{E}_Q(\mathcal{M}_{PQ})= \mathcal{E}_Q(\mathcal{M}_{\hat{P}Q}).$ 
Therefore, starting from $b_S\uplus b_{\hat{P}}, b_Q\uplus (\hat{P} - b_{\hat{P}})$ of $\mathcal{M}_{S\hat{P}}, \mathcal{M}_{\hat{P}Q},$ respectively we can get bases $b_S\uplus (b_{\hat{P}}\uplus \tilde{b}^3_P), b_Q\uplus ((\hat{P} - b_{\hat{P}})\uplus \tilde{b}^4_P)$ of $\mathcal{M}_{S{P}}, \mathcal{M}_{{P}Q},$ respectively. 
 Therefore, there is a unique pair of blocks  ${\mathbf{I}}^j_{{P}}$ and $({\mathbf{I}}^*)^j_{{P}}$ of $\mathcal{M}_{S {P}}, \mathcal{M}_{{P}Q},$  
 corresponding to ${\mathbf{I}}^j_{\hat{P}}$ and $({\mathbf{I}}^*)^j_{\hat{P}}$
 such that whenever $b_{{P}}- \tilde{b}^3_P$ and $({P} - b_{{P}})- \tilde{b}^4_P$ belong respectively to ${\mathbf{I}}^j_{\hat{P}}$ and $({\mathbf{I}}^*)^j_{\hat{P}},$ the subsets $b_P=(b_{{P}}- \tilde{b}^3_P)\uplus \tilde{b}^3_P,$  $P-b_P=(({P} - b_{{P}})- \tilde{b}^4_P) \uplus \tilde{b}^4_P$ belong respectively to  ${\mathbf{I}}^j_{{P}},$  $({\mathbf{I}}^*)^j_{{P}}.$  

Since $\mathcal{E}_P(\mathcal{M}^*_{SP}) = \mathcal{E}_P(\mathcal{M}_{PQ}),$
 we conclude that $\mathcal{E}_{\hat{P}}(\mathcal{M}^*_{S\hat{P}}) = \mathcal{E}_{\hat{P}}(\mathcal{M}_{\hat{P}Q}).$

\end{enumerate}
\end{proof}
\begin{figure}[htbp]
\begin{center}
 \includegraphics[width=4.5in]{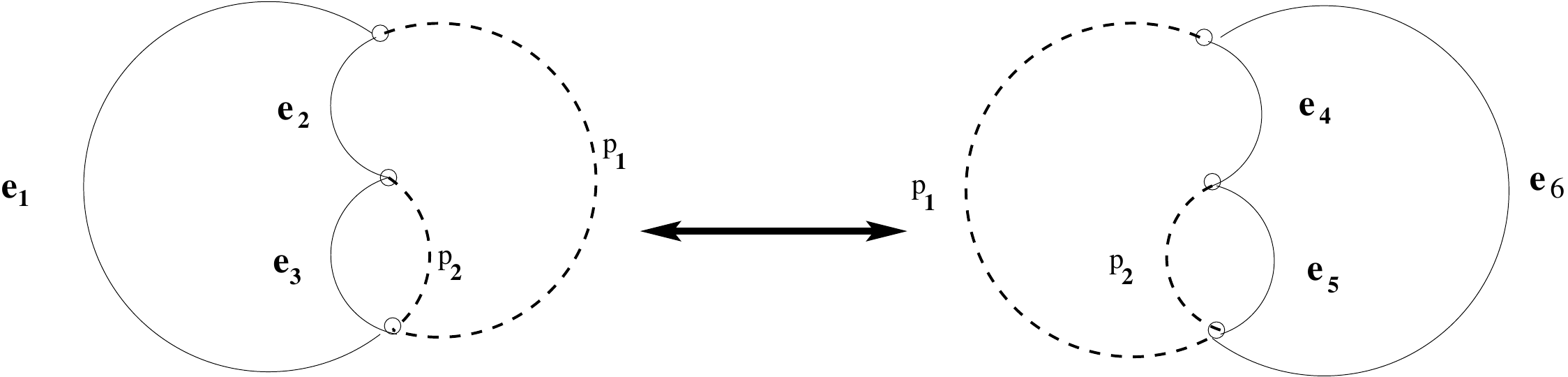}
 \caption{A minimally decomposable matroid with $S=\{e_1,e_2,e_3\}, Q=\{e_4,e_5,e_6\}$
that is not $\{S,Q\}-$complete
}
\label{fig:matroid2}
\end{center}
\end{figure}
\begin{remark}
We note that being $\{S,Q\}$-complete is not a necessary condition for a matroid $\mathcal{M}_{SQ}$ to be decomposable as $\mathcal{M}_{SP} \leftrightarrow_{\mathcal{M}} \mathcal{M}_{PQ}$ with $|P| = r(\mathcal{M}_{SQ} \circ S) - r(\mathcal{M}_{SQ} \times S).$

Let $\mathcal{M}_{SP}$ and $\mathcal{M}_{PQ}$ be the two graphic matroids shown in Figure \ref{fig:matroid2}, and let $\mathcal{M}_{SQ} \equiv \mathcal{M}_{SP} \leftrightarrow_{\mathcal{M}} \mathcal{M}_{PQ}.$ We will show that this is a minimal decomposition for $\mathcal{M}_{SQ}$, but the matroid itself is not $\{S,Q\}$-complete.

First, we observe that $\{e_1, e_2\}$ and $\{p_1, p_2\}$ are bases of $\mathcal{M}_{SP}$ and $\mathcal{M}_{PQ}$ respectively, so that $\{e_1, e_2, p_1, p_2\}$ is a base of $(\mathcal{M}_{SP} \vee \mathcal{M}_{PQ}),$ and $\{e_1, e_2\}$ is a base of:
\[
\mathcal{M}_{SQ} \equiv \mathcal{M}_{SP} \leftrightarrow_{\mathcal{M}} \mathcal{M}_{PQ} = (\mathcal{M}_{SP} \vee \mathcal{M}_{PQ}) \times (S \uplus Q).
\]
Therefore, $r(\mathcal{M}_{SQ} \circ S) = 2.$ Similarly, it can be seen that $\{e_4, e_5\}$ is a base of $\mathcal{M}_{SQ}$, so that $r(\mathcal{M}_{SQ} \times S) = 0.$ It follows that:
\[
r(\mathcal{M}_{SQ} \circ S) - r(\mathcal{M}_{SQ} \times S) = 2,
\]
which implies that $ \mathcal{M}_{SP} \leftrightarrow_{\mathcal{M}} \mathcal{M}_{PQ}$ is a minimal decomposition of $\mathcal{M}_{SQ}.$

Next, we have bases $\{e_2, p_2\}$ and $\{p_1, e_4\}$ for $\mathcal{M}_{SP}$ and $\mathcal{M}_{PQ}$ respectively, and therefore $\{e_2, p_2, p_1, e_4\}$ is a base of $\mathcal{M}_{SP} \vee \mathcal{M}_{PQ}.$ It follows that $\{e_2, e_4\}$ is a base of:
\[
\mathcal{M}_{SQ} \equiv \mathcal{M}_{SP} \leftrightarrow_{\mathcal{M}} \mathcal{M}_{PQ} = (\mathcal{M}_{SP} \vee \mathcal{M}_{PQ}) \times (S \uplus Q).
\]
It can be similarly verified that $\{e_2, e_5\}$ and $\{e_3, e_4\}$ are bases of $\mathcal{M}_{SQ}.$ 

However, $\{e_3, e_5\}$ cannot be a base of $\mathcal{M}_{SQ}$ because $\{e_3, p_2\}$ and $\{p_2, e_5\}$ are dependent in $\mathcal{M}_{SP}$ and $\mathcal{M}_{PQ}$ respectively. Consequently, we cannot have $\{e_3, p_i, p_j, e_5\}$ with $p_i \neq p_j$ as a base of $\mathcal{M}_{SP} \vee \mathcal{M}_{PQ}.$

Thus, $\mathcal{M}_{SQ}$ is not $\{S,Q\}$-complete, even though it is minimally decomposable.
\end{remark}
\section{Free products, principal sums and $\{S,Q\}$-completeness} 
\label{sec:free}
In this section we examine simple ways of building an $\{S,Q\}$-complete matroid $\mathcal{M}_{SQ},$ with specified restriction or contraction to the sets $S$ and $Q.$ Such matroids could be regarded as 'products' or 'sums' of these latter matroids, examples being the previously mentioned free products and principal sums of matroids. Our basic strategy is to use suitable equivalence relations of independent subsets of individual matroids as $\mathcal{E}_S(\mathcal{M}_{SQ}), \mathcal{E}_Q(\mathcal{M}_{SQ}).$ We address this question in the following lemma. We need a preliminary definition.

\begin{definition}
Let $\mathcal{M}_S$ be a matroid on $S.$ We say an equivalence relation $\mathcal{E}(\mathcal{M}_S)$ of specified independent sets of $\mathcal{M}_S$ is a \textbf{rank partition, $0 \leq r \leq r(\mathcal{M}_S),$} iff
  every block $\mathcal{E}^r(\mathcal{M}_S)$ contains all the independent sets of the same rank $r$  and only of rank $r.$
\end{definition}

\begin{lemma}
\label{lem:equivalencecondition}
Let $\mathcal{M}_S, \mathcal{M}_Q$ be matroids on $S, Q$ respectively with rank partitions $\mathcal{E}(\mathcal{M}_S), \mathcal{E}(\mathcal{M}_Q)$ respectively and let $0\leq k\leq r(\mathcal{M}_S)+r(\mathcal{M}_Q), k \in \mathbb{Z}.$ Let $\mathcal{B}$ be the collection of subsets of $S \uplus Q$ which are unions of independent sets of corresponding blocks of $\mathcal{E}^r(\mathcal{M}_S)$ and $\mathcal{E}^{k-r}(\mathcal{M}_Q),$ where:
\[
\max\{0, k - r(\mathcal{M}_Q )\} \leq r \leq \min\{r(\mathcal{M}_S ), k \}.
\]
We then have the following:
\begin{enumerate}
    \item $\mathcal{B}$ is the collection of bases of an $\{S,Q\}$-complete matroid $\mathcal{M}_{SQ}.$
    \item The bases of $\mathcal{M}_{SQ} \times S$ ($\mathcal{M}_{SQ} \times Q$) are the members of $\mathcal{E}^r(\mathcal{M}_S)$ ($\mathcal{E}^r(\mathcal{M}_Q)$), where $r = max \{0,k-r(\mathcal{M}_Q)\}$ ($r = max \{0,k-r(\mathcal{M}_S)\}$).
\item $r(\mathcal{M}_{SQ} \circ S) - r(\mathcal{M}_{SQ} \times S)= r(\mathcal{M}_{SQ} \circ Q) - r(\mathcal{M}_{SQ} \times Q)= min\{ r(\mathcal{M}_S), k\}- max \{0,k-r(\mathcal{M}_Q)\}.$
\end{enumerate}
\end{lemma}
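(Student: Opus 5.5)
The plan is to identify $\mathcal{B}$ with a familiar matroid. Since every block $\mathcal{E}^r(\mathcal{M}_S)$ consists of exactly the independent sets of $\mathcal{M}_S$ of size $r$, and similarly for $\mathcal{M}_Q$, a set $X\subseteq S\uplus Q$ lies in $\mathcal{B}$ iff $X\cap S$ is independent in $\mathcal{M}_S$, $X\cap Q$ is independent in $\mathcal{M}_Q$, and $|X|=k$. The range constraint $\max\{0,k-r(\mathcal{M}_Q)\}\le r\le \min\{r(\mathcal{M}_S),k\}$ is exactly what is needed for such sets to exist with $|X\cap S|=r$. So $\mathcal{B}$ is the family of independent sets of size $k$ of $\mathcal{M}_S\oplus\mathcal{M}_Q$, that is, the bases of the truncation of $\mathcal{M}_S\oplus\mathcal{M}_Q$ to rank $k$.

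I expect the only real work in part 1 to be checking the base axiom, though it is routine. I would verify it directly rather than only cite truncation. Take $b_1,b_2\in\mathcal{B}$ and $e_2\in b_2-b_1$.
\begin{itemize}
\item If $b_1\cup\{e_2\}$ is independent in $\mathcal{M}_S\oplus\mathcal{M}_Q$, remove any $e_1\in b_1-b_2$. Such an element exists since $|b_1|=|b_2|$ and $b_1\neq b_2$, and the result is independent of size $k$.
\item Otherwise $b_1\cup\{e_2\}$ contains a unique circuit $C$ of the direct sum. Then $C$ lies entirely in $S$ or entirely in $Q$, and it contains some $e_1\notin b_2$, since $C\not\subseteq b_2$. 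Removing $e_1$ gives an independent set of size $k$.
\end{itemize}
Completeness is then immediate. If $b^1_S\uplus b^1_Q$, $b^1_S\uplus b^2_Q$ and $b^2_S\uplus b^1_Q$ are in $\mathcal{B}$, then $|b^2_S|=k-|b^1_Q|=|b^1_S|$ and $|b^2_Q|=|b^1_Q|$. Hence $b^2_S\uplus b^2_Q$ is the union of independent sets from the corresponding blocks $\mathcal{E}^r(\mathcal{M}_S)$ and $\mathcal{E}^{k-r}(\mathcal{M}_Q)$, so it lies in $\mathcal{B}$.

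For part 2, I would use the fact that bases of $\mathcal{M}_{SQ}\times S$ are the minimal intersections of bases with $S$. The size of $b\cap S$ is minimized when $|b\cap Q|$ is as large as possible, namely $\min\{k,r(\mathcal{M}_Q)\}$. This gives $r=\max\{0,k-r(\mathcal{M}_Q)\}$. Every independent set $I_S$ of $\mathcal{M}_S$ of this size occurs, by pairing it with a base of $\mathcal{M}_Q$ or an independent subset of size $k-r$. Conversely, any intersection of this size is independent of size $r$. Thus the bases of $\mathcal{M}_{SQ}\times S$ are exactly the members of $\mathcal{E}^r(\mathcal{M}_S)$. The argument for $Q$ is symmetric.

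For part 3, the restriction $\mathcal{M}_{SQ}\circ S$ has rank equal to the maximum of $|b\cap S|$, which is $\min\{r(\mathcal{M}_S),k\}$. Subtracting part 2 gives the stated value for $S$. The equality with the $Q$-expression then follows from the identity $r(\mathcal{M}\circ S)-r(\mathcal{M}\times S)=r(\mathcal{M}\circ Q)-r(\mathcal{M}\times Q)$ already used in Theorem~\ref{thm:lowerboundP}. Alternatively, one can check it directly from $\min\{a,k\}-\max\{0,k-c\}=\min\{c,k\}-\max\{0,k-a\}$ for $k\le a+c$, where $a=r(\mathcal{M}_S)$ and $c=r(\mathcal{M}_Q)$.
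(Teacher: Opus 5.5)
Your proposal is correct and follows essentially the same route as the paper: a direct check of the base exchange axiom, followed by size counting for the ranks of $\mathcal{M}_{SQ}\times S$ and $\mathcal{M}_{SQ}\circ S$ together with the identity $r(\mathcal{M}\circ S)-r(\mathcal{M}\times S)=r(\mathcal{M}\circ Q)-r(\mathcal{M}\times Q)$. Your identification of $\mathcal{B}$ as the size-$k$ independent sets of $\mathcal{M}_S\oplus\mathcal{M}_Q$ lets you merge the paper's two subcases in the independent case into one, and your explicit cardinality check of $\{S,Q\}$-completeness supplies a step that the paper's proof leaves implicit.
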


\begin{proof}
\begin{enumerate}
\item  Let $b^1, b^2 \in \mathcal{B}$ and let $b^1 = b^1_{S} \uplus b^1_{Q}, \, b^2 = b^2_{S} \uplus b^2_{Q}.$ We know that $|b^1| = |b^2| = k.$ Let $e \in b^2 - b^1.$ We will show that there exists an element $e' \in b^1 - b^2$ such that $(e \cup (b^1 - e')) \in \mathcal{B}.$

Suppose $e \in S.$ If $e \cup b^1_{S}$ is dependent in $\mathcal{M}_S,$ then it contains a circuit of $\mathcal{M}_S$ which has an element $e'$ that is not in $b^2_{S}.$ Now $(e \cup (b^1_S - e')) \in \mathcal{E}^r(\mathcal{M}_S)$ and $(e \cup (b^1_S - e')) \uplus b^1_{Q} \in \mathcal{B}.$

If $e \cup b^1_{S}$ is independent in $\mathcal{M}_S,$ we consider two cases:
\begin{align*}
&\textbf{Case 1.} \quad |b^2_S| > |b^1_{S}|. \\
&\text{In this case, } |b^2_Q| < |b^1_{Q}|. \text{ Let } e' \in b^1_{Q} - b^2_Q. \text{ Then } e \cup b^1_{S} \uplus (b^1_{Q} - e') \in \mathcal{B}. \\
&\textbf{Case 2.} \quad |b^2_S| \leq |b^1_{S}|. \\
&\text{Since } e \in b^2_{S} - b^1_{S}, \text{ there is an element } e' \in b^1_{S} - b^2_{S}. \text{ Then } e \cup (b^1_{S} - e') \uplus b^1_{Q} \in \mathcal{B}.
\end{align*}
The case where $e \in Q$ is symmetric with the above.
\item  A subset $b_S$ ($b_Q$) is a base of $\mathcal{M}_{SQ} \times S$ ($\mathcal{M}_{SQ} \times Q$) iff  there exists a base $\hat{b}_Q$ of $\mathcal{M}_{SQ} \circ Q$ ($\hat{b}_S$ of $\mathcal{M}_{SQ} \circ S$) such that $b_S \uplus \hat{b}_Q$ ($b_Q \uplus \hat{b}_S$) is a base of $\mathcal{M}_{SQ}.$ The result now follows from the definition of $\mathcal{M}_{SQ}.$

\item 
We have $r(\mathcal{M}_{SQ}) = k = |b|$ for $b \in \mathcal{B}$ 
 and $r(\mathcal{M}_{SQ} \circ S) = min\{ r(\mathcal{M}_S), k\}.$ Since $r(\mathcal{M}_{SQ}) = r(\mathcal{M}_{SQ} \circ S) + r(\mathcal{M}_{SQ} \times Q) = r(\mathcal{M}_{SQ} \circ Q) + r(\mathcal{M}_{SQ} \times S),$ we have  
$ r(\mathcal{M}_{SQ} \circ S) - r(\mathcal{M}_{SQ} \times S) = r(\mathcal{M}_{SQ} \circ Q) - r(\mathcal{M}_{SQ} \times Q).$ Now  $r(\mathcal{M}_{SQ} \times S)=
 max\{0,k-r(\mathcal{M}_Q)\}$ so that $ r(\mathcal{M}_{SQ} \circ S) - r(\mathcal{M}_{SQ} \times S)= min\{ r(\mathcal{M}_S), k\}- max \{0,k-r(\mathcal{M}_Q)\}.$

\end{enumerate}
\end{proof}

\begin{remark}
\label{rem:product}
 When $k\geq max\{r(\mathcal{M}_{S}),r(\mathcal{M}_{Q})\}, $ Lemma \ref{lem:equivalencecondition} gives a method of building a `product' $\mathcal{M}_{SQ}$ of two given matroids $\mathcal{M}_S, \mathcal{M}_Q$ such that these matroids are restrictions of the product to the sets $S, Q,$ respectively. It is clear that to build a product whose contractions are specified matroids, we merely have to dualise the process, i.e., build a matroid $\mathcal{M}^*_{SQ}$ which has $\mathcal{M}^*_S, \mathcal{M}^*_Q$ as restrictions so that $\mathcal{M}_S, \mathcal{M}_Q$ are contractions of $\mathcal{M}_{SQ}.$ The idea of (minimal) composition of matroids gives us a method of building a product for which $\mathcal{M}_S, \mathcal{M}_Q$ are respectively restriction and contraction. This is described in the next theorem.
\end{remark}

We need a preliminary definition.

\begin{definition}
\label{def:RRCC}
Let $\mathcal{M}_S, \mathcal{M}_Q$ be matroids on $S, Q$ respectively with rank partitions $\mathcal{E}(\mathcal{M}_S), \mathcal{E}(\mathcal{M}_Q)$ respectively. 

We refer to the matroid $\mathcal{M}_{SQ}$ which has as bases the collection of subsets of $S \uplus Q$ which are unions of independent sets of corresponding blocks of $\mathcal{E}^r(\mathcal{M}_S)$ and $\mathcal{E}^{k_{\max}-r}(\mathcal{M}_Q),$ where $0 \leq k_{\max} \leq r(\mathcal{M}_S) + r(\mathcal{M}_Q)$ and $\max\{r(\mathcal{M}_S), r(\mathcal{M}_Q)\} \leq k_{\max},$ by $RR(\mathcal{M}_S, \mathcal{M}_Q)_{k_{\max}}.$ 

Similarly, we denote the matroid $(RR(\mathcal{M}^*_S, \mathcal{M}^*_Q)_{k_{\max}})^*$ by $CC(\mathcal{M}_S, \mathcal{M}_Q)_{k^*_{\max}},$ where $0 \leq k_{\max} \leq r(\mathcal{M}^*_S) + r(\mathcal{M}^*_Q), max\{r(\mathcal{M}^*_S),r(\mathcal{M}^*_Q)\} \leq k_{\max}, k^*_{\max} \equiv |S \uplus Q| - k_{\max}.$

When $k_{\max} = \max\{r(\mathcal{M}_S), r(\mathcal{M}_Q)\} = r(\mathcal{M}_S),$ we omit the subscript and write $RR(\mathcal{M}_S, \mathcal{M}_Q)_{r(\mathcal{M}_S)}$ as $RR(\mathcal{M}_S, \mathcal{M}_Q),$ and when $k_{\max} = \max\{r(\mathcal{M}^*_S), r(\mathcal{M}^*_Q)\} = r(\mathcal{M}^*_S),$ we write $(RR(\mathcal{M}_S^*, \mathcal{M}_Q^*)_{r(\mathcal{M}^*_S)})^*$ as $CC(\mathcal{M}_S, \mathcal{M}_Q).$
\end{definition}

\begin{theorem}
\label{thm:RRCC}
Let $\mathcal{M}_{S}, \mathcal{M}_{Q}$ be matroids respectively on disjoint sets $S, Q$ and let $P$ disjoint from $S\cup Q,$ with $0\leq |P|\leq min\{r(\mathcal{M}_S), r(\mathcal{M}^*_Q)\} .$  Let $\hat{\mathcal{M}}_{SP} = RR(\mathcal{M}_{S}, \mathcal{F}_P)$ and let $\hat{\mathcal{M}}_{PQ} = CC(\mathcal{M}_{Q}, \mathbf{0}_P).$ We have the following:
\begin{enumerate}
    \item The subset $P$ is contained in a base as well as in a cobase in the matroids $\hat{\mathcal{M}}_{SP}$ and $\hat{\mathcal{M}}_{PQ}$ so that $\hat{\mathcal{M}}^*_{SP}\circ P= \hat{\mathcal{M}}_{PQ}\circ P, \hat{\mathcal{M}}^*_{SP}\times P= \hat{\mathcal{M}}_{PQ}\times P.$ Further,
 $\mathcal{E}_P(\hat{\mathcal{M}}^*_{SP})= \mathcal{E}_P(\hat{\mathcal{M}}_{PQ}).$ 

    \item $\hat{\mathcal{M}}_{SP} \leftrightarrow_{\mathcal{M}} \mathcal{F}_P = \mathcal{M}_{S}$ and $\hat{\mathcal{M}}_{PQ} \leftrightarrow_{\mathcal{M}} \mathcal{F}_Q = \mathcal{F}_P.$
    \item $\hat{\mathcal{M}}_{SP} \leftrightarrow_{\mathcal{M}} \mathbf{0}_S = \mathbf{0}_P, \, \hat{\mathcal{M}}_{PQ} \leftrightarrow_{\mathcal{M}} \mathbf{0}_P = \mathcal{M}_{Q},$ and $\hat{\mathcal{M}}_{PQ} \leftrightarrow_{\mathcal{M}} \mathbf{0}_Q = \mathbf{0}_P.$
    \item $(\hat{\mathcal{M}}_{SP} \leftrightarrow_{\mathcal{M}} \hat{\mathcal{M}}_{PQ}) \circ S = \mathcal{M}_{S}$ and $(\hat{\mathcal{M}}_{SP} \leftrightarrow_{\mathcal{M}} \hat{\mathcal{M}}_{PQ}) \times Q = \mathcal{M}_{Q}.$
    \item Let $|P| =  \min\{r(\mathcal{M}_{S}), r(\mathcal{M}_{Q}^*)\}$ and let $\mathcal{B}$ be the collection of all subsets of $S \uplus Q$ of size $r(\mathcal{M}_{S}) + r(\mathcal{M}_{Q})$ which are unions of independent sets of $\mathcal{M}_{S}$ and spanning sets of $\mathcal{M}_{Q}.$ Then $\mathcal{B}$ is the collection of bases of $\hat{\mathcal{M}}_{SP} \leftrightarrow_{\mathcal{M}} \hat{\mathcal{M}}_{PQ},$ which is
 an $\{S,Q\}$-complete matroid.
    \item If $|P| = \min\{r(\mathcal{M}_{S}), r(\mathcal{M}_{Q}^*)\},$ then $\hat{\mathcal{M}}_{SP} \leftrightarrow_{\mathcal{M}} \hat{\mathcal{M}}_{PQ}$ is a minimal decomposition of the matroid whose bases are unions of independent sets of $\mathcal{M}_{S}$ and spanning sets of $\mathcal{M}_{Q}$ of size $r(\mathcal{M}_{S}) + r(\mathcal{M}_{Q}).$
\end{enumerate}
\end{theorem}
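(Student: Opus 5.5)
The plan is to work from explicit base descriptions of the two factors, and then use the compatibility machinery of Section~\ref{subsec:linking} together with Theorem~\ref{thm:compatible}.

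\textbf{Base descriptions.} By Definition~\ref{def:RRCC} with $k_{\max}=r(\mathcal{M}_S)$ (allowed since $|P|\le r(\mathcal{M}_S)$), the bases of $\hat{\mathcal{M}}_{SP}$ are the sets $I_S\uplus X_P$ such that:
\begin{itemize}
\item[(a)] $I_S$ is independent in $\mathcal{M}_S$ and $X_P\subseteq P$;
\item[(b)] $|I_S|+|X_P|=r(\mathcal{M}_S)$, so that $|I_S|\ge r(\mathcal{M}_S)-|P|$.
\end{itemize}
For $\hat{\mathcal{M}}_{PQ}$ (allowed since $|P|\le r(\mathcal{M}^*_Q)$), I dualise the analogous description of $RR(\mathcal{M}^*_Q,\mathcal{F}_P)$ and take complements. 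Its bases are the sets $T_Q\uplus Y_P$ such that:
\begin{itemize}
\item[(a)] $T_Q$ is spanning in $\mathcal{M}_Q$ and $Y_P\subseteq P$;
\item[(b)] $|T_Q|+|Y_P|=r(\mathcal{M}_Q)+|P|$, so that $|T_Q|\le r(\mathcal{M}_Q)+|P|$.
\end{itemize}
I expect this duality bookkeeping for the $CC$ factor to be the main obstacle: the size constraints must come out exactly right, since every later step is a cardinality count. By Lemma~\ref{lem:equivalencecondition}, both factors are bipartition complete, and their $P$-blocks are exactly the families of subsets of $P$ of a fixed size.

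\textbf{Parts 1--4.} For part 1, the set $P$ together with an independent set of $\mathcal{M}_S$ of size $r(\mathcal{M}_S)-|P|$ is a base of $\hat{\mathcal{M}}_{SP}$. A base of $\mathcal{M}_S$ with $X_P=\emptyset$ is also a base, so $P$ lies in a cobase. The same argument works for $\hat{\mathcal{M}}_{PQ}$. Hence all four relevant minors on $P$ are $\mathcal{F}_P$ or $\mathbf{0}_P$, and compatibility follows as in the remark after Lemma~\ref{cor:minorspecial}.

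For $\mathcal{E}_P(\hat{\mathcal{M}}^*_{SP})=\mathcal{E}_P(\hat{\mathcal{M}}_{PQ})$, I note the following. The $P$-blocks of $\hat{\mathcal{M}}^*_{SP}$ are complements of fixed-size families, hence again the fixed-size families, with sizes ranging over $0,\dots,|P|$. The $P$-sizes $|Y_P|$ occurring in $\hat{\mathcal{M}}_{PQ}$ also range over all of $0,\dots,|P|$, because $r(\mathcal{M}_Q)+|P|\le|Q|$.

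Parts 2 and 3 are direct readings of the base lists. They amount to the minors $\circ S$, $\times P$ and $\circ P$, $\times Q$ computed via Lemma~\ref{lem:equivalencecondition}(2). Part 4 then follows from Lemma~\ref{cor:minorspecial}, using the compatibility from part 1.

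\textbf{Part 5.} Theorem~\ref{thm:compatible}(1) gives $\{S,Q\}$-completeness of the linking. By Lemma~\ref{lem:baseexchange}(2), $b_S\uplus b_Q$ is a base of the linking iff there is $X_P\subseteq P$ with $b_S\uplus X_P$ a base of $\hat{\mathcal{M}}_{SP}$ and $(P-X_P)\uplus b_Q$ a base of $\hat{\mathcal{M}}_{PQ}$. Counting sizes gives $|b_S|=r(\mathcal{M}_S)-|X_P|$ and $|b_Q|=r(\mathcal{M}_Q)+|X_P|$, so $|b_S\uplus b_Q|=r(\mathcal{M}_S)+r(\mathcal{M}_Q)$. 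The side constraints are $|b_S|\ge r(\mathcal{M}_S)-|P|$ and $|b_Q|\le r(\mathcal{M}_Q)+|P|$.

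When $|P|=\min\{r(\mathcal{M}_S),r(\mathcal{M}^*_Q)\}$, these constraints are automatic:
\begin{itemize}
\item if $|P|=r(\mathcal{M}_S)$, both constraints are trivial;
\item if $|P|=|Q|-r(\mathcal{M}_Q)$, the second is trivial, and the first follows from $|b_Q|\le|Q|$.
\end{itemize}
Conversely, for any independent $b_S$ and spanning $b_Q$ of total size $r(\mathcal{M}_S)+r(\mathcal{M}_Q)$, I choose $X_P$ of size $r(\mathcal{M}_S)-|b_S|$. Then the two sets above are bases of the respective factors, so $b_S\uplus b_Q$ is a base of the linking. This proves that $\mathcal{B}$ is exactly the family of bases.

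\textbf{Part 6.} Using part 5, the minimal $|b_S|$ over bases in $\mathcal{B}$ is
\[
\max\{0,\,r(\mathcal{M}_S)+r(\mathcal{M}_Q)-|Q|\}=r(\mathcal{M}_S)-|P|,
\]
which is $r(\mathcal{M}_{SQ}\times S)$. Part 4 gives $r(\mathcal{M}_{SQ}\circ S)=r(\mathcal{M}_S)$. Hence $\lambda_{SQ}(\mathcal{M}_{SQ})=|P|$, and the decomposition is minimal by Theorem~\ref{thm:lowerboundP}.
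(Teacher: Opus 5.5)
Your proposal is correct and follows essentially the same route as the paper. Both arguments use explicit base descriptions of $RR(\mathcal{M}_S,\mathcal{F}_P)$ and $CC(\mathcal{M}_Q,\mathbf{0}_P)$, get compatibility from $P$ lying in a base and a cobase, characterise bases of the linking as pairs of disjoint factor bases covering $P$, and finish parts 5 and 6 by a cardinality count. The differences are cosmetic: you cite Theorem~\ref{thm:compatible}(1) for $\{S,Q\}$-completeness and Lemma~\ref{cor:minorspecial} for part 4, and you compute $r(\mathcal{M}_{SQ}\times S)$ as a single $\max$ rather than in the paper's two cases; you are also more careful than the paper in checking that every block size $0,\dots,|P|$ occurs on both sides of $\mathcal{E}_P$.
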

\begin{proof}
\begin{enumerate}
\item  The definitions of $\hat{\mathcal{M}}_{SP}$ and $\hat{\mathcal{M}}_{PQ}$  imply respectively that $r(\hat{\mathcal{M}}_{SP}) = r(\mathcal{M}_{S}), r(\hat{\mathcal{M}}^*_{PQ}) = r(\mathcal{M}^*_{Q})$ and that $|P|\leq r(\mathcal{M}_{S}),|P|\leq r(\mathcal{M}^*_{Q}).$ From this it follows  that $P$ is contained in a base as well as in a cobase in the matroids $\hat{\mathcal{M}}_{SP}$ and $\hat{\mathcal{M}}_{PQ}$ and that  
 $\mathcal{E}_P(\hat{\mathcal{M}}^*_{SP}), \mathcal{E}_P(\hat{\mathcal{M}}_{PQ})$ are just partitions of the set $2^P$ into subsets of the same cardinality
 and therefore the same.

\item  
Since  $\hat{\mathcal{M}}_{SP} = RR(\mathcal{M}_{S}, \mathcal{F}_P)$ with $k_{max}= r(\mathcal{M}_{S})\geq |P|,$ we have that independent sets of  $\hat{\mathcal{M}}_{SP}\circ S$ are the same as those of $\mathcal{M}_{S}.$ Since  $\hat{\mathcal{M}}^*_{PQ} = RR(\mathcal{M}^*_{Q}, \mathcal{F}_P)$ with $k_{max}= r(\mathcal{M}^*_{Q})\geq |P|,$ we have the null set as the base of   $\hat{\mathcal{M}}^*_{PQ}\times P,$ 
 so that $P$ is the base for $\hat{\mathcal{M}}_{PQ}\circ P.$ 
 We therefore have the following
\[
\hat{\mathcal{M}}_{SP} \leftrightarrow_{\mathcal{M}} \mathcal{F}_P = \hat{\mathcal{M}}_{SP} \circ S = \mathcal{M}_{S},
\quad \hat{\mathcal{M}}_{PQ} \leftrightarrow_{\mathcal{M}} \mathcal{F}_Q = \hat{\mathcal{M}}_{PQ}\circ P=
\mathcal{F}_P.
\]

\item  Since $\hat{\mathcal{M}}_{SP} = RR(\mathcal{M}_{S}, \mathcal{F}_P)$ with $k_{max}= r(\mathcal{M}_{S})\geq |P|,$ we have $\hat{\mathcal{M}}_{SP}\times P= \hat{\mathcal{M}}_{SP} \leftrightarrow_{\mathcal{M}} \mathbf{0}_S =\mathbf{0}_P.$ 
Since  $\hat{\mathcal{M}}^*_{PQ} = RR(\mathcal{M}^*_{Q}, \mathcal{F}_P)$ with $k_{max}= r(\mathcal{M}^*_{Q})\geq |P|,$ we have $\hat{\mathcal{M}}^*_{PQ}\circ P=\mathcal{F}_P, $ so that  $\hat{\mathcal{M}}_{PQ}\times P=
\hat{\mathcal{M}}_{PQ} \leftrightarrow_{\mathcal{M}} \mathbf{0}_Q = \mathbf{0}_P.$ 
We also have:

$
\hat{\mathcal{M}}_{PQ} \leftrightarrow_{\mathcal{M}} \mathbf{0}_P = (RR(\mathcal{M}_{Q}^*, \mathcal{F}_P))^* \times Q = (RR(\mathcal{M}_{Q}^*, \mathcal{F}_P) \circ Q)^* = (\mathcal{M}_Q^*)^* = \mathcal{M}_Q.
$

\item  We use the restricted associativity of linking (Lemma \ref{lem:restassoc}) to prove these results. For the first relation:
\begin{align*}
(\hat{\mathcal{M}}_{SP} \leftrightarrow_{\mathcal{M}} \hat{\mathcal{M}}_{PQ}) \circ S &= (\hat{\mathcal{M}}_{SP} \leftrightarrow_{\mathcal{M}} \hat{\mathcal{M}}_{PQ}) \leftrightarrow_{\mathcal{M}} \mathcal{F}_Q \\
&= \hat{\mathcal{M}}_{SP} \leftrightarrow_{\mathcal{M}} (\hat{\mathcal{M}}_{PQ} \leftrightarrow_{\mathcal{M}} \mathcal{F}_Q) \\
&= \hat{\mathcal{M}}_{SP} \leftrightarrow_{\mathcal{M}} \mathcal{F}_P = \hat{\mathcal{M}}_{SP} \circ S, \quad \text{(using part 2 above)}.
\end{align*}
Next, for the second relation:
\begin{align*}
(\hat{\mathcal{M}}_{SP} \leftrightarrow_{\mathcal{M}} \hat{\mathcal{M}}_{PQ}) \times Q &= (\hat{\mathcal{M}}_{SP} \leftrightarrow_{\mathcal{M}} \hat{\mathcal{M}}_{PQ}) \leftrightarrow_{\mathcal{M}} \mathbf{0}_S \\
&= (\hat{\mathcal{M}}_{SP} \leftrightarrow_{\mathcal{M}} \mathbf{0}_S) \leftrightarrow_{\mathcal{M}} \hat{\mathcal{M}}_{PQ} \\
&= \mathbf{0}_P \leftrightarrow_{\mathcal{M}} \hat{\mathcal{M}}_{PQ} = \mathcal{M}_{Q}, \quad \text{(using part 3 above)}.
\end{align*}

\item  By the fact that $\hat{\mathcal{M}}_{SP}, \hat{\mathcal{M}}_{PQ}$ are compatible over $P$ (part 1 above), it follows that $(\hat{\mathcal{M}}_{SP} \vee \hat{\mathcal{M}}_{PQ}) \circ P = \hat{\mathcal{M}}_{SP} \circ P \vee \hat{\mathcal{M}}_{PQ} \circ P = \mathcal{F}_P.$ Therefore, a subset $b_S \uplus b_Q$ is a base of $\hat{\mathcal{M}}_{SP} \leftrightarrow_{\mathcal{M}} \hat{\mathcal{M}}_{PQ}$ iff  there exists $b_P \subseteq P$ such that $b_S \uplus b_P$ and $(P - b_P) \uplus b_Q$ are bases of $\hat{\mathcal{M}}_{SP}$ and \(\hat{\mathcal{M}}_{PQ}\) respectively.

Now, $b_S \uplus b_P$ is a base of $\hat{\mathcal{M}}_{SP} = RR(\mathcal{M}_{S}, \mathcal{F}_P)$ iff and only if $b_S, b_P$ are independent in $\mathcal{M}_{S}, \mathcal{F}_P$ respectively, and:
\[
|b_S \uplus b_P| = r(\hat{\mathcal{M}}_{SP} \circ S) + r(\hat{\mathcal{M}}_{SP} \times P) = r(\mathcal{M}_{S}).
\]
We note that every subset of $P$ is automatically independent in $\mathcal{F}_P.$

Similarly, $b_Q \uplus (P - b_P)$ is a base of $\hat{\mathcal{M}}_{PQ} = CC(\mathcal{M}_{Q}, \mathbf{0}_P)$ iff  $(Q - b_Q) \uplus b_P$ is a base of $\hat{\mathcal{M}}^*_{PQ} = RR(\mathcal{M}^*_{Q}, \mathcal{F}_P).$ This occurs iff  $(Q - b_Q), b_P$ are independent in $\mathcal{M}^*_{Q}, \mathcal{F}_P$ respectively, and:
\[
|(Q - b_Q) \uplus b_P| = r(\mathcal{M}^*_{Q}) + r(\hat{\mathcal{M}}^*_{PQ} \times P) = r(\mathcal{M}^*_{Q}).
\]
Equivalently, this holds iff  $b_Q \uplus (P - b_P)$ contains spanning sets respectively of $\mathcal{M}_{Q}, \mathbf{0}_P$ and:
\[
|b_Q \uplus (P - b_P)| = r(\mathcal{M}_{Q}) + r(\hat{\mathcal{M}}_{PQ} \circ P) = r(\mathcal{M}_{Q}) + |P|.
\]
We note that every subset of $P$ is a spanning set of $\mathbf{0}_P.$

Thus, we find that $b_S \uplus b_Q$ is a base of $\hat{\mathcal{M}}_{SP} \leftrightarrow_{\mathcal{M}} \hat{\mathcal{M}}_{PQ}$ iff  the following conditions are satisfied:
\begin{itemize}
    \item $b_S$ is independent in $\mathcal{M}_{S}.$
    \item $b_Q$ is a spanning set of $\mathcal{M}_{Q}.$
    \item $|b_S \uplus b_Q| = r(\mathcal{M}_{S}) + r(\mathcal{M}_{Q}).$
    \item $0 \leq r(\mathcal{M}_{S}) - |b_S| = |b_Q| - r(\mathcal{M}_{Q}) = r(\mathcal{M}_{Q}^*) - |Q - b_Q| \leq |P|.$
\end{itemize}

If $|P| = \min\{r(\mathcal{M}_{S}), r(\mathcal{M}_{Q}^*)\},$ the final bounding condition is automatically satisfied. Therefore, $b_S \uplus b_Q$ is a base of $\hat{\mathcal{M}}_{SP} \leftrightarrow_{\mathcal{M}} \hat{\mathcal{M}}_{PQ}$ iff  $b_S$ is independent in $\mathcal{M}_{S},$  $b_Q$ is a spanning set of $\mathcal{M}_{Q}$ and $|b_S|+|b_Q|= r(\mathcal{M}_{S})+ r(\mathcal{M}_{Q}).$  This proves the matroid is $\{S,Q\}$-complete.

\item  Let $\mathcal{M}_{SQ}$ denote the matroid whose bases are unions of independent sets of $\mathcal{M}_{S}$ and spanning sets of $\mathcal{M}_{Q}$ of total size $r(\mathcal{M}_{S}) + r(\mathcal{M}_{Q}).$ If $|P| = \min\{r(\mathcal{M}_{S}), r(\mathcal{M}_{Q}^*)\},$ we will verify that:
\[
|P| = r(\mathcal{M}_{SQ} \circ S) - r(\mathcal{M}_{SQ} \times S) = r(\mathcal{M}_{SQ} \circ Q) - r(\mathcal{M}_{SQ} \times Q).
\]
By Theorem \ref{thm:lowerboundP}, it would then follow that $\hat{\mathcal{M}}_{SP} \leftrightarrow_{\mathcal{M}} \hat{\mathcal{M}}_{PQ}$ is a minimal decomposition of $\mathcal{M}_{SQ}.$

It is clear that $\mathcal{M}_{SQ} \circ S = \mathcal{M}_{S},$ so that $r(\mathcal{M}_{SQ} \circ S) = r(\mathcal{M}_{S}).$ To compute $r(\mathcal{M}_{SQ} \times S),$ we first construct a base of $\mathcal{M}_{SQ} \circ Q,$ i.e., we construct a spanning set $Q'$ of maximum size less than or equal to $r(\mathcal{M}_{SQ}) = r(\mathcal{M}_{S}) + r(\mathcal{M}_{Q}).$ We branch into two cases:

\textbf{Case 1.} If $|Q| \geq r(\mathcal{M}_{SQ}),$ we choose $Q'$ of size equal to $r(\mathcal{M}_{SQ}).$ In this case, $r(\mathcal{M}_{SQ} \times S) = 0$ and:
\[
|P| = r(\mathcal{M}_{SQ} \circ S) - r(\mathcal{M}_{SQ} \times S) = r(\mathcal{M}_{SQ} \circ S) = r(\mathcal{M}_{S}).
\]
Furthermore, we have $|Q| - r(\mathcal{M}_{Q}) = r(\mathcal{M}_{Q}^*) \geq r(\mathcal{M}_{S}).$

\textbf{Case 2.} If $|Q| < r(\mathcal{M}_{SQ}),$ we choose $Q' = Q.$ We then obtain a base of $\mathcal{M}_{SQ}$ that intersects $S$ in a base of $\mathcal{M}_{SQ} \times S.$ Therefore:
\[
r(\mathcal{M}_{SQ} \times S) = r(\mathcal{M}_{S}) + r(\mathcal{M}_{Q}) - |Q| = r(\mathcal{M}_{S}) - r(\mathcal{M}_{Q}^*),
\]
which yields:
\[
|P| = r(\mathcal{M}_{SQ} \circ S) - r(\mathcal{M}_{SQ} \times S) = r(\mathcal{M}_{S}) - (r(\mathcal{M}_{S}) - r(\mathcal{M}_{Q}^*)) = r(\mathcal{M}_{Q}^*).
\]
In this case, $r(\mathcal{M}_{Q}^*) \leq r(\mathcal{M}_{S}).$

Thus, if  $|P| = \min\{r(\mathcal{M}_{S}), r(\mathcal{M}_{Q}^*)\},$ we always  have $|P|=  r(\mathcal{M}_{SQ} \circ S) - r(\mathcal{M}_{SQ} \times S) $ showing that $\hat{\mathcal{M}}_{SP} \leftrightarrow_{\mathcal{M}} \hat{\mathcal{M}}_{PQ}$ forms a minimal decomposition of $\mathcal{M}_{SQ}.$
\end{enumerate}
\end{proof}
\begin{definition}
We refer to the matroid whose bases are unions of independent sets of $\mathcal{M}_S$ and spanning sets of $\mathcal{M}_Q,$ of size $r(\mathcal{M}_S) + r(\mathcal{M}_Q),$ by $RC(\mathcal{M}_S, \mathcal{M}_Q).$
\end{definition}

We note that $(RC(\mathcal{M}_S, \mathcal{M}_Q)) \circ S = \mathcal{M}_S$ and $(RC(\mathcal{M}_S, \mathcal{M}_Q)) \times Q = \mathcal{M}_Q,$ and the definition of $RC(\mathcal{M}_S, \mathcal{M}_Q)$ is identical to that of $\mathcal{M}_S \Box \mathcal{M}_Q,$ the free product of \cite{crapofree}.

We next relate the principal sum of two matroids to $\{S,Q\}$--complete matroids.

\begin{definition}
\label{def:principalsum} \cite{boninsemidirect}.
Let $\mathcal{M}_S$ and $\mathcal{M}_Q$ be matroids on disjoint sets $S$ and $Q,$ respectively. Let $A \subseteq S$ and $B \subseteq Q.$ Let $r_{\mathcal{M}_S}(\cdot)$ be the rank function of $\mathcal{M}_S.$ Let $\mathcal{M}_{SB}$ be the matroid on $S \uplus B,$ with rank function defined as follows:
\begin{align*}
    r(X) &:\equiv r_{\mathcal{M}_S}(X), \quad X \subseteq S; \\
    r(X \uplus B_1) &:\equiv r_{\mathcal{M}_S}(X) + \min\bigl([r_{\mathcal{M}_S}(X \cup A) - r_{\mathcal{M}_S}(X)], \, |B_1|\bigr), \quad X \subseteq S, \; B_1 \subseteq B.
\end{align*}
The principal sum $(\mathcal{M}_S, \mathcal{M}_Q; A, B),$ where $A \subseteq S$ and $B \subseteq Q,$ of matroids $\mathcal{M}_S$ and $\mathcal{M}_Q,$ is defined to be the matroid $\mathcal{M}_{SB} \vee \mathcal{M}_Q.$
\end{definition}

\begin{theorem}
\label{thm:sqcompletepartitionbased}
Let $\mathcal{M}_S$ and $\mathcal{M}_Q$ be matroids on disjoint sets $S$ and $Q,$ respectively. Let $A \subseteq S$ and $B \subseteq Q.$ We have the following.
\begin{enumerate}
    \item The bases of the matroid $\mathcal{M}_{SB}$ are the unions of two disjoint sets, $b_S \uplus B',$ such that $b_S = \hat{b}_S - A',$ where $\hat{b}_S$ is a base of $\mathcal{M}_S,$ and $A' \subseteq A \cap \hat{b}_S, \quad B' \subseteq B, \quad |A'| = |B'|.$
    
    \item The bases of the matroid $\mathcal{M}_{SB} \vee \mathcal{M}_Q$ are the unions of two disjoint sets, $b_S \uplus b_Q,$ such that $b_S = \hat{b}_S - A'$ and $b_Q = \hat{b}_Q \uplus B',$ where $\hat{b}_S$ and $\hat{b}_Q$ are bases of $\mathcal{M}_S$ and $\mathcal{M}_Q,$ respectively, and the subsets satisfy $A' \subseteq A \cap \hat{b}_S, \quad B' \subseteq B \cap (Q - \hat{b}_Q), \quad |A'| = |B'|.$
    
    \item $(\mathcal{M}_S, \mathcal{M}_Q; A, B)$ is $\{S,Q\}$--complete.
    
    \item $(\mathcal{M}_S, \mathcal{M}_Q; A, B)^* = (\mathcal{M}^*_Q, \mathcal{M}^*_S; B, A)$ \cite{boninsemidirect}.
\end{enumerate}
\end{theorem}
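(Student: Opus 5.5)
We will prove the four parts in order; each part feeds the next.

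\textbf{Part 1.} Fix $X \subseteq S$ and $B_1 \subseteq B$. By the rank formula, $X \uplus B_1$ is independent in $\mathcal{M}_{SB}$ iff $X$ is independent in $\mathcal{M}_S$ and $|B_1| \leq r_{\mathcal{M}_S}(X \cup A) - |X|$. The last condition says that $X$ extends to an independent set $X \uplus A''$ of $\mathcal{M}_S$ with $A'' \subseteq A - X$ and $|A''| \geq |B_1|$. The rank of $\mathcal{M}_{SB}$ is $r(\mathcal{M}_S)$: take $X$ a base of $\mathcal{M}_S$, and note that $r(X \cup A) - r(X) \leq r(S) - r(X)$. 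So a set $b_S \uplus B'$ is a base iff it is independent and has size $r(\mathcal{M}_S)$. We extend $b_S \cup A''$ to a base $\hat{b}_S$ of $\mathcal{M}_S$. Counting sizes then forces $|A''| = |B'|$ and $\hat{b}_S = b_S \uplus A''$, so we may set $A' :\equiv A''$. The converse direction is immediate from the rank formula.

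\textbf{Part 2.} We first show $r(\mathcal{M}_{SB} \vee \mathcal{M}_Q) = r(\mathcal{M}_S) + r(\mathcal{M}_Q)$. The upper bound $\leq$ holds for any union. For $\geq$, take a base of $\mathcal{M}_{SB}$ with $B' = \emptyset$ together with a base of $\mathcal{M}_Q$; these are disjoint. Hence the bases of the union are exactly the disjoint unions $b^1 \uplus b^2$ of a base $b^1$ of $\mathcal{M}_{SB}$ and a base $b^2$ of $\mathcal{M}_Q$, since the union is maximal of full size. Writing $b^1 = (\hat{b}_S - A') \uplus B'$ by Part 1, disjointness means $B' \subseteq B - \hat{b}_Q$. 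The $S$-part of the union is $\hat{b}_S - A'$ and the $Q$-part is $\hat{b}_Q \uplus B'$, which is the claimed description.

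\textbf{Part 3.} Suppose $b^1_S \uplus b^1_Q$, $b^1_S \uplus b^2_Q$ and $b^2_S \uplus b^1_Q$ are bases; we must show $b^2_S \uplus b^2_Q$ is a base. By Part 2, $b_S \uplus b_Q$ is a base iff three conditions hold:
\begin{enumerate}
\item[(a)] $b_S$ is independent in $\mathcal{M}_S$, and $|b_S| = r(\mathcal{M}_S) - t$, where $t := r(\mathcal{M}_S) - |b_S|$ satisfies $t \leq r(b_S \cup A) - |b_S|$, i.e. $b_S$ extends by $t$ elements of $A$ to a base;
\item[(b)] $b_Q$ is spanning in $\mathcal{M}_Q$, and $b_Q$ contains a base $\hat{b}_Q$ with $b_Q - \hat{b}_Q \subseteq B$ and $|b_Q - \hat{b}_Q| = t$; equivalently, $b_Q$ has size $r(\mathcal{M}_Q) + t$ and $r_{\mathcal{M}_Q}(b_Q - B) = r(\mathcal{M}_Q)$ when $|b_Q \cap B| \geq t$ (more precisely, $r(b_Q \cap (Q-B))$ plus $|b_Q \cap B|$ suffices);
\item[(c)] the sizes match: $|b_S| + |b_Q| = r(\mathcal{M}_S) + r(\mathcal{M}_Q)$.
\end{enumerate}
Condition (a) depends only on $b_S$ and the integer $t$, and condition (b) depends only on $b_Q$ and $t$. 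In each of the three given bases, $t$ is determined by $|b_S|$, and (c) links it to $|b_Q|$. The bases $b^1_S \uplus b^1_Q$ and $b^1_S \uplus b^2_Q$ share the same $t$, and the bases $b^1_S \uplus b^1_Q$ and $b^2_S \uplus b^1_Q$ share the same $t$. So all four sets have the same $t$. Since (a) holds for $b^2_S$ with $t$ and (b) holds for $b^2_Q$ with $t$, the set $b^2_S \uplus b^2_Q$ is a base. The main obstacle is stating (b) cleanly as a property of $b_Q$ alone: one needs that some base of $\mathcal{M}_Q$ inside $b_Q$ misses only elements of $B$, and this must not depend on any coupling with $b_S$. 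Part 2 gives exactly this decoupling, because the only link between the two sides is $|A'| = |B'| = t$.

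\textbf{Part 4.} We plan to cite \cite{boninsemidirect}. A direct check is also possible: complement the description in Part 2. The complement of $b_S$ is $(S - \hat{b}_S) \uplus A'$, which is a base of $\mathcal{M}^*_S$ plus $A'$. The complement of $b_Q$ is $(Q - \hat{b}_Q) - B'$, which is a base of $\mathcal{M}^*_Q$ minus $B'$. This matches Part 2 applied to $(\mathcal{M}^*_Q, \mathcal{M}^*_S; B, A)$, with the roles of the two sides swapped.
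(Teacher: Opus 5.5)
Your proposal is correct and follows essentially the paper's route: the rank formula gives Part 1; Part 2 shows every base of the union is a disjoint union of bases of $\mathcal{M}_{SB}$ and $\mathcal{M}_Q$; Part 3 uses that Part 2 decouples the $S$- and $Q$-sides except for the single link $|A'|=|B'|$, which the three given bases force to agree; and Part 4 complements the description in Part 2. The only slip is the ``equivalently'' gloss in your condition (b), which is wrong and can simply be deleted: the correct restatement is that $b_Q$ spans $\mathcal{M}_Q$ and $b_Q\cap(Q-B)$ is \emph{independent} (not spanning), since with $B=Q$ and $r(\mathcal{M}_Q)>0$ your version would reject every valid $Q$-part, but your argument only uses the first formulation of (b), so nothing breaks.
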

\begin{proof}
\begin{enumerate}
    \item It is clear that a base of $\mathcal{M}_S$ is also a base of $\mathcal{M}_{SB}.$
    Let $X \uplus Y,$ with $X \subseteq S$ and $Y \subseteq B,$ be a base of $\mathcal{M}_{SB}.$ Then it is independent and has cardinality $r(\mathcal{M}_S).$ The rank formula implies that $X$ is independent in $\mathcal{M}_S$ and:
    \[|Y| \le r_{\mathcal{M}_S}(X \cup A) - r_{\mathcal{M}_S}(X).\]
    Hence there is a set $A' \subseteq A - X$ of size $|Y|$ such that $X \cup A'$ is independent. Since $|X| + |A'| = r(\mathcal{M}_S),$ the set $\hat{b}_S = X \cup A'$ is a base of $\mathcal{M}_S,$ and $X = \hat{b}_S - A'.$
    
    Conversely, if $\hat{b}_S$ is a base, $A' \subseteq A \cap \hat{b}_S,$ $Y \subseteq B,$ and $|A'| = |Y|,$ then for $X = \hat{b}_S - A'$ the rank formula gives:
    \[r(X \uplus Y) = |X| + |Y| = r(\mathcal{M}_S),\]
    so that $X \uplus Y$ is a base.

    \item Consider the matroid $\mathcal{M}_{SB} \vee \mathcal{M}_Q.$ Let $\mathcal{F}_{SQ}$ be the family of unions of subsets $b_S \uplus b_Q$ where $b_S = \hat{b}_S - A'$ and $b_Q = \hat{b}_Q \uplus B',$ with $\hat{b}_S$ and $\hat{b}_Q$ being bases respectively of $\mathcal{M}_S$ and $\mathcal{M}_Q,$ where $A' \subseteq \hat{b}_S \cap A, \quad B' \subseteq (Q - \hat{b}_Q) \cap B, \quad |A'| = |B'|.$
    
    It is clear that $\mathcal{F}_{SQ}$ is contained in the base family of $\mathcal{M}_{SB} \vee \mathcal{M}_Q.$ We will prove that the base family of $\mathcal{M}_{SB} \vee \mathcal{M}_Q$ is contained in $\mathcal{F}_{SQ}.$
    
    Since any base of $\mathcal{M}_S \vee \mathcal{M}_Q$ is a base of $\mathcal{M}_{SB} \vee \mathcal{M}_Q,$ there exist disjoint bases of $\mathcal{M}_{SB}$ and $\mathcal{M}_Q$ whose union is a base of $\mathcal{M}_{SB} \vee \mathcal{M}_Q.$ Since a base of $\mathcal{M}_S$ is a base of $\mathcal{M}_{SB},$ and all bases of $\mathcal{M}_{SB} \vee \mathcal{M}_Q$ have the same size, it follows that every base of $\mathcal{M}_{SB} \vee \mathcal{M}_Q$ has the form $b_{SB} \uplus b_Q,$ where $b_{SB}$ and $b_Q$ are respectively bases of $\mathcal{M}_{SB}$ and $\mathcal{M}_Q.$
    
    But in part 1 above, we have seen that $b_{SB}$ is of the form $(\hat{b}_S - A') \uplus B',$ where $\hat{b}_S$ is a base of $\mathcal{M}_S,$ $A' \subseteq \hat{b}_S,$ and $B' \subseteq B.$ Since $b_Q$ is a base of $\mathcal{M}_Q$ disjoint from $(\hat{b}_S - A') \uplus B',$ every base of $\mathcal{M}_{SB} \vee \mathcal{M}_Q$ belongs to $\mathcal{F}_{SQ}.$ Thus it follows that $\mathcal{F}_{SQ}$ is the base family of $\mathcal{M}_{SB} \vee \mathcal{M}_Q.$

    \item If $b'_S \uplus b'_Q, \, b'_S \uplus b''_Q, \, b''_S \uplus b'_Q$ are bases of $(\mathcal{M}_S, \mathcal{M}_Q; A, B),$ we have bases $\hat{b}'_S, \hat{b}''_S$ of $\mathcal{M}_S,$ bases $\hat{b}'_Q, \hat{b}''_Q$ of $\mathcal{M}_Q,$ and subsets $A' \subseteq A \cap \hat{b}'_S, \quad A'' \subseteq A \cap \hat{b}''_S, \quad B' \subseteq B \cap (Q - \hat{b}'_Q), \quad B'' \subseteq B \cap (Q - \hat{b}''_Q),$ with $|A'| = |A''| = |B'| = |B''|$ such that:
    \begin{align*}
        b'_S \uplus b'_Q &= (\hat{b}'_S - A') \uplus (\hat{b}'_Q \uplus B'), \\
        b'_S \uplus b''_Q &= (\hat{b}'_S - A') \uplus (\hat{b}''_Q \uplus B''), \\
        b''_S \uplus b'_Q &= (\hat{b}''_S - A'') \uplus (\hat{b}'_Q \uplus B').
    \end{align*}
    It is clear that $b''_S \uplus b''_Q$ is a base of $\mathcal{M}_{SB} \vee \mathcal{M}_Q$ iff $r(\mathcal{M}_S) - |b''_S| = |b''_Q| - r(\mathcal{M}_Q).$ Since $r(\mathcal{M}_S) - |b'_S| = |b'_Q| - r(\mathcal{M}_Q) = |b''_Q| - r(\mathcal{M}_Q)$ and $r(\mathcal{M}_S) - |b''_S| = |b'_Q| - r(\mathcal{M}_Q),$ it follows that $r(\mathcal{M}_S) - |b''_S| = |b''_Q| - r(\mathcal{M}_Q).$ Thus $b''_S \uplus b''_Q$ is a base of $\mathcal{M}_{SB} \vee \mathcal{M}_Q = (\mathcal{M}_S, \mathcal{M}_Q; A, B).$

    \item It is clear that subsets are bases of $(\mathcal{M}_S, \mathcal{M}_Q; A, B)^*$ iff they have the form $((S - \hat{b}'_S) \uplus A') \uplus ((Q - \hat{b}'_Q) - B'),$ where $(S - \hat{b}'_S)$ and $(Q - \hat{b}'_Q)$ are bases respectively of $\mathcal{M}^*_S$ and $\mathcal{M}^*_Q,$ and where $A' \subseteq A \cap \hat{b}'_S = A \cap (S - (S - \hat{b}'_S)), \quad B' \subseteq B \cap (Q - \hat{b}'_Q),$ with $|A'| = |B'|.$ By part 2 above, this is precisely the condition for being bases of $(\mathcal{M}^*_Q, \mathcal{M}^*_S; B, A).$
\end{enumerate}
\end{proof}
\begin{corollary}
\label{cor:decomposeprincipalsum}
Let $\mathcal{M}_{SP}$ be the matroid $(\mathcal{M}_S, \mathbf{0}_{P}; A, P),$ and let $\mathcal{M}_{PQ} :\equiv (\mathcal{M}^*_Q, \mathbf{0}_{P}; B, P)^*,$ with $A\subseteq S, B\subseteq Q, P$ disjoint from $S\cup Q, |P| = \min\{r(\mathcal{M}_S \circ A), \, r(\mathcal{M}^*_Q \circ B)\}.$ Then $(\mathcal{M}_S, \mathcal{M}_Q; A, B) = \mathcal{M}_{SP} \leftrightarrow_{\mathcal{M}} \mathcal{M}_{PQ}.$ This is a minimal decomposition for $(\mathcal{M}_S, \mathcal{M}_Q; A, B).$
 Further,  $\mathcal{E}_P(\mathcal{M}_{SP}^*) = \mathcal{E}_P(\mathcal{M}_{PQ}).$
\end{corollary}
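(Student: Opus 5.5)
The plan is to compute everything from the explicit base description in Theorem~\ref{thm:sqcompletepartitionbased}, part 2, and then apply Lemma~\ref{lem:blockunion} and Theorem~\ref{thm:lowerboundP}.

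\emph{Describe the bases of the two factors.} Apply part 2 of Theorem~\ref{thm:sqcompletepartitionbased} with $\mathcal{M}_Q$ replaced by $\mathbf{0}_P$, whose only base is $\emptyset$. The bases of $\mathcal{M}_{SP}$ are then exactly the sets $(\hat{b}_S - A') \uplus B'$, where $\hat{b}_S$ is a base of $\mathcal{M}_S$, $A' \subseteq A \cap \hat{b}_S$, $B' \subseteq P$ and $|A'| = |B'|$. The same result applied to $(\mathcal{M}^*_Q, \mathbf{0}_P; B, P)$ describes its bases, and complementing gives the bases of $\mathcal{M}_{PQ}$. They are exactly the sets $(\hat{b}_Q \uplus B'') \uplus (P - P'')$, where $\hat{b}_Q$ is a base of $\mathcal{M}_Q$, $B'' \subseteq B - \hat{b}_Q$, $P'' \subseteq P$ and $|P''| = |B''|$.

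\emph{The equivalence classes on $P$.} In $\mathcal{M}_{SP}$, whether $b_S \uplus b_P$ is a base depends on $b_P$ only through $|b_P|$. Hence the blocks of $\mathcal{E}_P(\mathcal{M}_{SP})$ are the classes of subsets of $P$ of a fixed size $k$. Since $|P| \le r(\mathcal{M}_S \circ A)$, every value $0 \le k \le |P|$ is realised: choose a base $\hat{b}_S$ meeting $A$ in at least $|P|$ elements. Dually, the blocks of $\mathcal{E}_P(\mathcal{M}^*_{SP})$ are the size classes $|P| - k$, for $0 \le k \le |P|$. For $\mathcal{M}_{PQ}$, the $P$-part $P - P''$ has size $|P| - k$ with $k = |B''|$. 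Since $|P| \le r(\mathcal{M}^*_Q \circ B)$, every $k \le |P|$ is realised: pick a base $\hat{b}_Q$ of $\mathcal{M}_Q$ whose complement contains a maximal independent set of $\mathcal{M}^*_Q \circ B$. Thus both $\mathcal{E}_P(\mathcal{M}^*_{SP})$ and $\mathcal{E}_P(\mathcal{M}_{PQ})$ are the partition of $2^P$ into size classes, which gives the last claim.

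\emph{Identify the linking with the principal sum.} By Lemma~\ref{lem:blockunion}, every base of $\mathcal{M}_{SP} \vee \mathcal{M}_{PQ}$ is a disjoint union of bases of the two factors covering $P$. Hence $b_S \uplus b_Q$ is a base of $\mathcal{M}_{SP} \leftrightarrow_{\mathcal{M}} \mathcal{M}_{PQ}$ iff there is $b_P \subseteq P$ with $b_S \uplus b_P$ a base of $\mathcal{M}_{SP}$ and $(P - b_P) \uplus b_Q$ a base of $\mathcal{M}_{PQ}$. By the descriptions above, this says $b_S = \hat{b}_S - A'$ and $b_Q = \hat{b}_Q \uplus B''$ with $|A'| = |b_P| = |B''| \le |P|$, and the bound $\le |P|$ is automatic because $|A'| \le r(\mathcal{M}_S \circ A)$ and $|B''| \le r(\mathcal{M}^*_Q \circ B)$. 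Conversely, any such pair can be realised by taking $b_P$ of size $|A'|$. Comparing with part 2 of Theorem~\ref{thm:sqcompletepartitionbased}, the two base families coincide.

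\emph{Minimality.} I expect this to be the step needing the most care, since it requires computing the link width of the principal sum and checking that the two independent maximisations combine. By the base description, $r(\cdot \circ S) = r(\mathcal{M}_S)$. A base of the principal sum meeting $S$ in a base of the contraction $\times S$ is one maximising $|A'| = |B'|$. The maximum of $|A'|$ over all choices is $r(\mathcal{M}_S \circ A)$. The maximum of $|B \cap (Q - \hat{b}_Q)|$ over bases $\hat{b}_Q$ of $\mathcal{M}_Q$ is $r(\mathcal{M}^*_Q \circ B)$. The $S$-choice and the $Q$-choice are made independently, so $r(\cdot \times S) = r(\mathcal{M}_S) - \min\{r(\mathcal{M}_S \circ A), r(\mathcal{M}^*_Q \circ B)\}$. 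Therefore $\lambda_{SQ} = |P|$, and Theorem~\ref{thm:lowerboundP} shows that the decomposition is minimal.
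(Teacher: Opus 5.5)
Your proof is correct. The first two steps follow the paper: you identify $\mathcal{M}_{SP}\leftrightarrow_{\mathcal{M}}\mathcal{M}_{PQ}$ with the principal sum by matching complementary $P$-parts, and you describe both $\mathcal{E}_P$ partitions as the size classes of $2^P$.

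You are more careful than the paper at two points. First, you prove $\mathcal{E}_P(\mathcal{M}^*_{SP})=\mathcal{E}_P(\mathcal{M}_{PQ})$ before anything else. You then use it, via Lemma~\ref{lem:blockunion}, to justify that bases of the linking come from factor bases with complementary $P$-parts; the paper only asserts this. Second, you note explicitly that the bound $|A'|\le|P|$ is automatic, because $|A'|=|B''|$ and both are bounded by the respective ranks.

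The real difference is in the minimality step.
\begin{itemize}
\item The paper shows that $P$ lies in a base and a cobase of both factors, so $|P|=r(\mathcal{M}_{SP}\circ S)-r(\mathcal{M}_{SP}\times S)$. It then transfers this to the composite by restricted associativity: $\mathcal{M}_{SQ}\circ S=\mathcal{M}_{SP}\leftrightarrow_{\mathcal{M}}\mathcal{F}_P=\mathcal{M}_{SP}\circ S$, and similarly for $\times S$. This is the argument of Lemma~\ref{cor:minorspecial}.
\item You compute $\lambda_{SQ}$ of the principal sum directly from its base family, maximising $|A'|=|B'|$ over independent choices of $\hat b_S$ and $\hat b_Q$.
\end{itemize}
Your route is self-contained and gives the intrinsic formula $\lambda_{SQ}((\mathcal{M}_S,\mathcal{M}_Q;A,B))=\min\{r(\mathcal{M}_S\circ A),r(\mathcal{M}^*_Q\circ B)\}$ for every principal sum, independent of any decomposition. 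The paper's route is structural and applies unchanged whenever $P$ sits in a base and a cobase of both factors.

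One wording slip needs fixing. It is not true that every base of $\mathcal{M}_{SP}\vee\mathcal{M}_{PQ}$ covers $P$. For $P\ne\emptyset$, choose a base $\hat b_Q$ with $|B-\hat b_Q|\ge|P|$ and a set $B''\subseteq B-\hat b_Q$ with $|B''|=|P|$. Then $\hat b_S$ and $\hat b_Q\uplus B''$ are disjoint bases of the two factors, so their union is a base of $\mathcal{M}_{SP}\vee\mathcal{M}_{PQ}$ that misses $P$ entirely. What you actually need is the other conclusion of Lemma~\ref{lem:blockunion}, namely $(\mathcal{M}_{SP}\vee\mathcal{M}_{PQ})\circ P=\mathcal{F}_P$. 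It implies that the bases of the contraction to $S\uplus Q$ come exactly from those bases of the union that contain $P$. With that correction, your ``iff'' characterisation of the bases of the linking stands.
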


\begin{proof}
The bases of $\mathcal{M}_{SP} :\equiv (\mathcal{M}_S, \mathbf{0}_{P}; A, P)$ are precisely the subsets that have the form $(\hat{b}_S - A') \uplus P',$ where $\hat{b}_S$ is a base of $\mathcal{M}_S, \quad A' \subseteq A \cap \hat{b}_S, \quad P' \subseteq P, \quad |A'| = |P'|.$ Those of $\mathcal{M}^*_{PQ} :\equiv (\mathcal{M}^*_Q, \mathbf{0}_{P}; B, P)$ are precisely the subsets that have the form $((Q - \hat{b}_Q) - B'') \uplus P'',$ where $\hat{b}_Q$ is a base of $\mathcal{M}_Q, \quad B'' \subseteq B \cap (Q - \hat{b}_Q), \quad P'' \subseteq P, \quad |B''| = |P''|.$ Those of $\mathcal{M}_{PQ} :\equiv (\mathcal{M}^*_Q, \mathbf{0}_{P}; B, P)^*$ are precisely the sets that have the form $(\hat{b}_Q \uplus B'') \uplus (P - P''),$ where $B'' \subseteq B \cap (Q - \hat{b}_Q), \quad P'' \subseteq P, \quad |B''| = |P''|.$

Now the bases $(\hat{b}_S - A') \uplus P'$ and $(\hat{b}_Q \uplus B'') \uplus (P - P'')$ of $\mathcal{M}_{SP}$ and $\mathcal{M}_{PQ},$ respectively, are disjoint and cover $P$ iff $P' = P''.$ It follows that the bases of $\mathcal{M}_{SP} \leftrightarrow_{\mathcal{M}} \mathcal{M}_{PQ} = (\mathcal{M}_{SP} \vee \mathcal{M}_{PQ}) \times (S \uplus Q)$ are precisely the subsets with the form $(\hat{b}_S - A') \uplus (\hat{b}_Q \uplus B''),$ where $A' \subseteq A \cap \hat{b}_S, \quad B'' \subseteq B \cap (Q - \hat{b}_Q), \quad |A'| = |B''|.$ In part 2 above, we have seen that bases of $(\mathcal{M}_S, \mathcal{M}_{Q}; A, B)$ are precisely the sets with this form. Thus $(\mathcal{M}_S, \mathcal{M}_{Q}; A, B) = \mathcal{M}_{SP} \leftrightarrow_{\mathcal{M}} \mathcal{M}_{PQ}.$

Next, we show that by the definition of $\mathcal{M}_{SP}$ and $\mathcal{M}_{PQ},$ it follows that $P$ is contained in a base as well as in a cobase of $\mathcal{M}_{SP}$ and $\mathcal{M}_{PQ}.$ From the definition of $\mathcal{M}_{SP},$ it is clear that any base of $\mathcal{M}_S$ is a base of $\mathcal{M}_{SP}.$ Further, since $|P| \le r(\mathcal{M}_S \circ A),$ there exist $A' \subseteq A$ independent in $\mathcal{M}_S$ such that $|A'| = |P|,$ and a base $\hat{b}_S$ of $\mathcal{M}_S$ such that $A' \subseteq \hat{b}_S.$ Therefore there exist cobases of $\mathcal{M}_{SP}$ which contain $P$ and also bases of $\mathcal{M}_{SP}$ which contain $P.$

Similarly, it can be seen that there exist cobases of $\mathcal{M}^*_{PQ} = (\mathcal{M}^*_Q, \mathbf{0}_{P}; B, P)$ which contain $P$ and also bases of $\mathcal{M}^*_{PQ}$ which contain $P.$ Therefore there exist cobases of $\mathcal{M}_{PQ}$ which contain $P$ and also bases of $\mathcal{M}_{PQ}$ which contain $P.$

It follows that $r(\mathcal{M}_{SP} \circ P) = |P|$ and $r(\mathcal{M}_{SP} \times P) = 0.$ Therefore, $|P| = r(\mathcal{M}_{SP} \circ P) - r(\mathcal{M}_{SP} \times P) = r(\mathcal{M}_{SP} \circ S) - r(\mathcal{M}_{SP} \times S).$

Let $\mathcal{M}_{SQ} :\equiv (\mathcal{M}_S, \mathcal{M}_Q; A, B).$ We have:
\begin{align*}
    \mathcal{M}_{SQ} \circ S &= (\mathcal{M}_{SP} \leftrightarrow_{\mathcal{M}} \mathcal{M}_{PQ}) \leftrightarrow_{\mathcal{M}} \mathcal{F}_Q \\
    &= \mathcal{M}_{SP} \leftrightarrow_{\mathcal{M}} (\mathcal{M}_{PQ} \leftrightarrow_{\mathcal{M}} \mathcal{F}_Q) \\
    &= \mathcal{M}_{SP} \leftrightarrow_{\mathcal{M}} (\mathcal{M}_{PQ} \circ P) \\
    &= \mathcal{M}_{SP} \leftrightarrow_{\mathcal{M}} \mathcal{F}_P = \mathcal{M}_{SP} \circ S.
\end{align*}
Similarly, $\mathcal{M}_{SQ} \times S = \mathcal{M}_{SP} \times S.$ Therefore, $|P| = r(\mathcal{M}_{SQ} \circ S) - r(\mathcal{M}_{SQ} \times S).$ By Theorem \ref{thm:lowerboundP}, it follows that $(\mathcal{M}_S, \mathcal{M}_{Q}; A, B)$ is minimally decomposed into $\mathcal{M}_{SP}$ and $\mathcal{M}_{PQ}.$

 When $|P| = \min\{r(\mathcal{M}_S \circ A), \, r(\mathcal{M}^*_Q \circ B)\},$ 
 it can be verified that  $\mathcal{E}_P(\mathcal{M}_{SP}^*) , \mathcal{E}_P(\mathcal{M}_{PQ})$ are both  partitions of the set $2^P$ into subsets of the same cardinality and therefore the same.

\end{proof}
\section{Conclusion}

We have introduced and studied the notion of $\{S,Q\}$--completeness in matroids on $S \uplus Q,$ using the idea of the linking operation $\mathcal{M}_{SP} \leftrightarrow_{\mathcal{M}} \mathcal{M}_{PQ} :\equiv (\mathcal{M}_{SP} \vee \mathcal{M}_{PQ}) \times (S \uplus Q)$ defined in \cite{STHN2014}. A matroid $\mathcal{M}_{SQ}$ is $\{S,Q\}$--complete iff whenever $b_S \uplus b_Q, \, b_S \uplus b'_Q, \, b'_S \uplus b_Q$ are bases of $\mathcal{M}_{SQ},$ so is $b'_S \uplus b'_Q.$ The class of $\{S,Q\}$--complete matroids includes the well-known free products and principal sums of matroids (\cite{crapofree}, \cite{boninsemidirect}). The linking operation is analogous to the matched composition ``$\mathcal{V}_{SP} \leftrightarrow_{\mathcal{V}} \mathcal{V}_{PQ}$'' of vector spaces defined in \cite{HNarayanan1986a}.

The fundamental problem concerning the linking of matroids is, given $\mathcal{M}_{SP} \leftrightarrow_{\mathcal{M}} \mathcal{M}_{PQ},$ to define matroids $\mathcal{M}_{S\hat{P}}$ and $\mathcal{M}_{\hat{P}Q}$ such that $\mathcal{M}_{S\hat{P}} \leftrightarrow_{\mathcal{M}} \mathcal{M}_{\hat{P}Q} = \mathcal{M}_{SP} \leftrightarrow_{\mathcal{M}} \mathcal{M}_{PQ}$ with $|\hat{P}|$ minimized. This is analogous to a corresponding problem for vector spaces which has been solved completely (\cite{HNarayanan1986a}, \cite{narayanan1987topological}). Our approach for matroids was to mimic what worked for vector spaces as far as possible.

We have shown that $|\hat{P}| \ge r((\mathcal{M}_{SP} \leftrightarrow_{\mathcal{M}} \mathcal{M}_{PQ}) \circ S) - r((\mathcal{M}_{SP} \leftrightarrow_{\mathcal{M}} \mathcal{M}_{PQ}) \times S)$ and that this lower bound can be achieved if the matroids $\mathcal{M}_{SP}$ and $\mathcal{M}_{PQ}$ that we start with, satisfy the conditions for being strongly compatible:
\begin{itemize}
    \item $\mathcal{M}_{SP} \circ P = \mathcal{M}^*_{PQ} \circ P$ and $\mathcal{M}_{SP} \times P = \mathcal{M}^*_{PQ} \times P.$
    \item For every specified pair of disjoint bases $b^3_P$ and $b^4_P$ of $\mathcal{M}_{SP} \times P$ and $\mathcal{M}_{PQ} \times P$ respectively and every base $b_S \uplus b_Q$ of $\mathcal{M}_{SP} \leftrightarrow_{\mathcal{M}} \mathcal{M}_{PQ},$ there exist disjoint bases $b_S \uplus b^1_P$ and $b_Q \uplus b^2_P$ of $\mathcal{M}_{SP}$ and $\mathcal{M}_{PQ},$ respectively, such that $b^1_P \supseteq b^3_P$ and $b_Q \uplus b^2_P \supseteq b^4_P.$
\end{itemize}

We have shown that if a matroid $\mathcal{M}_{SQ}:\equiv \mathcal{M}_{SP} \leftrightarrow_{\mathcal{M}} \mathcal{M}_{PQ},$  is $\{S,Q\}$--complete, with $\mathcal{M}_{SP},  \mathcal{M}_{PQ},$ being respectively  $\{S,P\}$--complete, $\{P,Q\}$--complete, then strong compatibility of $\mathcal{M}_{SP},  \mathcal{M}_{PQ},$ reduces to compatibility plus the simple condition $\mathcal{E}_P(\mathcal{M}_{SP}^*) = \mathcal{E}_P(\mathcal{M}_{PQ}).$ 
 We assert that the problem of characterizing a matroid with a given partition of the underlying set, for which minimal decomposition in the above sense is possible, remains open, since $\{S,Q\}$--completeness is only a sufficient condition.

\appendix
\section{Basic matroid results}
\label{subsec:matroidresults}

This section follows \cite{STHN2014}. Proofs of the results might be found in \cite{oxley}.

We have the following set of basic results regarding minors and duals.

\begin{theorem} 
\label{thm:dotcrossidentitym}
Let $S \cap P = \emptyset, \quad T_2 \subseteq T_1 \subseteq S,$ let $\mathcal{M}_S$ be a matroid on $S,$ and let $\mathcal{M}_{SP}$ be a matroid on $S \uplus P.$ We then have the following. 
\begin{enumerate}
    \item Let $b_S, b'_S$ be distinct bases of $\mathcal{M}_S$ and let $e' \in (b'_S - b_S).$ Then there is a circuit $C$ of $\mathcal{M}_S$ contained in $e' \uplus b_S$ and an element $e \in C \cap b_S.$ The subset $(b_S - e) \uplus e'$ is a base of $\mathcal{M}_S.$ 
    \item $(\mathcal{M}_S \times T_1) \circ T_2 = (\mathcal{M}_S \circ (S - (T_1 - T_2))) \times T_2.$
    \item $r(\mathcal{M}_{SP}) = r(\mathcal{M}_{SP} \circ S) + r(\mathcal{M}_{SP} \times P).$
    \item $r(\mathcal{M}_S) + r(\mathcal{M}^{*}_S) = |S|$ and $(\mathcal{M}_S)^{**} = \mathcal{M}_S.$
    \item $T$ is independent in $\mathcal{M}_S$ iff $S - T$ is a spanning set of $\mathcal{M}^{*}_S.$
    \item $\mathcal{M}_{SP}^{*} \circ P = (\mathcal{M}_{SP} \times P)^{*}$ and $\mathcal{M}_{SP}^{*} \times S = (\mathcal{M}_{SP} \circ S)^{*}.$
    \item Every base of $\mathcal{M}_{SP} \times P$ ($\mathcal{M}_{SP} \circ P$) is contained in (contains) a base of $\mathcal{M}_{SP} \circ P$ ($\mathcal{M}_{SP} \times P$). 
    \item A subset $b_P$ is a base of $\mathcal{M}_{SP} \times P$ iff there exists $b_S \subseteq S$ such that $b_S$ is a base of $\mathcal{M}_{SP} \circ S$ and $b_S \uplus b_P$ is a base of $\mathcal{M}_{SP}.$
    \item If $b_S \uplus b_P$ is a base of $\mathcal{M}_{SP},$ then $b_S$ is contained in a base of $\mathcal{M}_{SP} \circ S$ and contains a base of $\mathcal{M}_{SP} \times S.$
    \item A subset $C$ is a bond (circuit) of $\mathcal{M}_{PQ} \times P$ ($\mathcal{M}_{PQ} \circ P$) iff it is a bond (circuit) of $\mathcal{M}_{PQ}$ contained in $P.$ 
\end{enumerate} 
\end{theorem}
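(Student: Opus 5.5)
The statement is the appendix collection of standard facts, so the plan is to derive each item directly from the base axiom and the definitions of restriction, contraction and dual given in Section~\ref{sec:Preliminaries}. Items are taken in an order where later ones can lean on earlier ones.

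For part 1, I would take $e'\in b'_S-b_S$. Since $b_S$ is a maximal independent set, $b_S\cup e'$ is dependent. The circuit $C\subseteq b_S\cup e'$ is unique, by the circuit elimination property that follows from the base axiom. Then $C$ contains $e'$ and some $e\in C\cap b_S$. Removing $e$ destroys the only circuit, so $(b_S-e)\cup e'$ is independent of full size, hence a base.

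Parts 3, 7, 8 and 9 come from the ``grow a base'' argument. I would fix a base $b_S$ of $\mathcal{M}_{SP}\circ S$ and extend it to a base $b_S\uplus b_P$ of $\mathcal{M}_{SP}$. By definition $b_P$ then meets $P$ minimally among base intersections. The key claim is that every base of $\mathcal{M}_{SP}$ meeting $S$ maximally meets $P$ in a base of $\mathcal{M}_{SP}\times P$. This gives part 8 and the rank identity of part 3. Parts 7 and 9 follow by the same exchange: an independent subset of $P$ can be extended inside any spanning set.

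Part 2 (commutation of deletion and contraction) I would reduce, by induction, to one element deleted and one contracted. There it suffices to compare base families using the ``maximal/minimal intersection'' characterizations.

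For the duality items, part 4's first claim is immediate from complementation: $|b|+|S-b|=|S|$, and double complementation gives $\mathcal{M}^{**}=\mathcal{M}$. Part 5 holds because $T\subseteq b$ iff $S-T\supseteq S-b$. For part 6, bases of $\mathcal{M}^*_{SP}\circ P$ are maximal intersections $(S\uplus P - b)\cap P = P-(b\cap P)$, where the $b$ realize minimal intersections with $P$. These are exactly the complements in $P$ of bases of $\mathcal{M}_{SP}\times P$; the second identity in part 6 is symmetric.

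For part 10, a circuit of $\mathcal{M}\circ P$ is a minimal dependent set in $P$, hence a circuit of $\mathcal{M}$ inside $P$. The bond statement for $\mathcal{M}\times P$ then follows by dualizing via part 6.

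The main obstacle is the exchange claim behind part 8: that maximal meeting with $S$ forces minimal meeting with $P$. I would prove it with the submodularity-free argument from part 1, repeatedly exchanging elements of $P$ for elements of $S$. If this gets delicate, I would simply cite the standard proofs in \cite{oxley}, as the appendix itself indicates.
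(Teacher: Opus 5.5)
The paper gives no proof of this theorem: the appendix only lists these as standard facts and refers to \cite{oxley}. Your proposal is therefore a genuinely different, self-contained route. You derive every item from the base axiom and the maximal/minimal-intersection descriptions of $\circ$ and $\times$ in Section~\ref{sec:Preliminaries}. It is correct in outline, and what it adds over the citation is a check that the paper's own phrasing of contraction really yields the textbook facts.

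In part~8 you have the hard direction backwards. Take bases of $\mathcal{M}_{SP}\times P$ to be the inclusion-minimal sets $b\cap P$. Then ``$b\cap S$ is a base of $\mathcal{M}_{SP}\circ S$ implies $b\cap P$ is minimal'' is immediate from $|b\cap S|+|b\cap P|=r(\mathcal{M}_{SP})$, and it gives only the ``if'' half of part~8. The exchange is needed for the converse. Suppose $b\cap P$ is inclusion-minimal but $(b\cap S)\cup e$ is independent for some $e\in S-b$. The unique circuit in $b\cup e$ must then meet $b\cap P$ in some $f$, and $(b-f)\cup e$ is a base with strictly smaller $P$-part, a contradiction. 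This is exactly the exchange of elements of $P$ for elements of $S$ that you describe, and one step suffices. So nothing is missing, but your ``key claim'' as stated is the trivial half.

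Part~2 is much quicker via the rank formula $r_{\mathcal{M}_S\times T}(X)=r(X\cup(S-T))-r(S-T)$. For $X\subseteq T_2$, both sides evaluate to $r(X\cup(S-T_1))-r(S-T_1)$, since $(S-(T_1-T_2))-T_2=S-T_1$. Your element-by-element induction would first need to show that successive contractions compose.

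In part~1, note that $C\cap b_S\neq\emptyset$ because $e'$ lies in a base and so is not a loop.
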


The following is a  basic result about matroid unions.

\begin{theorem}
\label{thm:sumintersectionm}
Let $\mathcal{M}^1_A, \, \mathcal{M}^2_B, \, \mathcal{M}_S,$ and $\mathcal{M}'_S$ be matroids. Then:
\begin{enumerate}
    \item $r(\mathcal{M}_S) + r(\mathcal{M}'_S) = r(\mathcal{M}_S \vee \mathcal{M}'_S) + r(\mathcal{M}_S \wedge \mathcal{M}'_S).$
    \item We have the following distributive properties under restriction and contraction over a subset $X \subseteq A \cap B$:
    \begin{align*}
        (\mathcal{M}^1_A \vee \mathcal{M}^2_B) \circ X &= \mathcal{M}^1_A \circ X \vee \mathcal{M}^2_B \circ X, \\
        (\mathcal{M}^1_A \wedge \mathcal{M}^2_B) \times X &= \mathcal{M}^1_A \times X \wedge \mathcal{M}^2_B \times X.
    \end{align*}
\end{enumerate}
\end{theorem}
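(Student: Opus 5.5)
The statement to prove is Theorem~\ref{thm:sumintersectionm}: (1) the rank identity for union and intersection, and (2) distributivity of $\vee$ over restriction and of $\wedge$ over contraction. I will derive both from the definitions of union and intersection through bases, together with the duality relation $(\mathcal{M}_S\vee\mathcal{M}'_S)^*=\mathcal{M}^*_S\wedge\mathcal{M}'^*_S$ stated in the preliminaries.

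\textbf{Part 1.} The plan is to pass to duals. Since $\mathcal{M}_S\wedge\mathcal{M}'_S=(\mathcal{M}^*_S\vee\mathcal{M}'^*_S)^*$, we have
\[
r(\mathcal{M}_S\wedge\mathcal{M}'_S)=|S|-r(\mathcal{M}^*_S\vee\mathcal{M}'^*_S).
\]
A base of the union is a maximal $b^1\cup b^2$. Its rank is therefore $\max|b^1\cup b^2|$, which equals $|b^1|+|b^2|-\min|b^1\cap b^2|$ over pairs of bases. Applying this to the duals, with $c^i=S-b^i$, the minimum of $|c^1\cap c^2|$ equals $|S|-\max|b^1\cup b^2|$. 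Substituting these expressions and using $r(\mathcal{M}^*)=|S|-r(\mathcal{M})$ gives the identity.

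The one point needing care is that maximal sets $b^1\cup b^2$ all have the same cardinality. This holds because the union is a matroid, as the paper asserts. The same remark applies to the minimal sets $b^1\cap b^2$. Once both facts are in place, the ranks of the union and intersection are governed by the same optimal pair of bases, and the computation closes.

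\textbf{Part 2, first identity.} I will compare independent sets on both sides. For a union, the independent sets are exactly the unions $I^1\cup I^2$ of independent sets of the two matroids. These are subsets of maximal unions of bases, and conversely any such union extends to a base. A subset of $X$ is therefore independent in $(\mathcal{M}^1_A\vee\mathcal{M}^2_B)\circ X$ iff it can be written as $I^1\cup I^2$ with each $I^i$ independent in the respective matroid and contained in $X$. This is precisely independence in $\mathcal{M}^1_A\circ X\vee\mathcal{M}^2_B\circ X$. Here the union on $A\cup B$ is understood via direct sums with zero matroids, as in the paper's convention.

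\textbf{Part 2, second identity.} This follows by duality. The dual of contraction is restriction, and the dual of $\wedge$ is $\vee$. Applying the first identity to the dual matroids and dualizing back yields the second.

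\textbf{Main obstacle.} I expect the hardest step to be justifying that the independent sets of the union are exactly the unions of independent sets. Given that the union is a matroid whose bases are the maximal $b^1\cup b^2$, this reduces to showing that every $I^1\cup I^2$ extends to such a maximal set. That extension is immediate by enlarging each $I^i$ to a base.
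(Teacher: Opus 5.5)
Your proof is correct. The paper does not prove this result itself: it is listed in the appendix among basic facts, with a pointer to Oxley, so there is no in-paper route to compare against. Your argument uses only the base-family definitions of $\vee$ and $\wedge$, the fact that the union is a matroid, and the duality $(\mathcal{M}^1_S\vee\mathcal{M}^2_S)^*=(\mathcal{M}^1_S)^*\wedge(\mathcal{M}^2_S)^*$, so it is a sound self-contained substitute.

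A few places deserve one more line each.

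In part 1 the detour through duals is unnecessary. $\mathcal{M}_S\wedge\mathcal{M}'_S$ is a matroid (the dual of $\mathcal{M}^*_S\vee(\mathcal{M}'_S)^*$) whose bases are the minimal sets $b^1\cap b^2$, so its rank is $\min|b^1\cap b^2|$. Since $|b^1\cup b^2|+|b^1\cap b^2|=r(\mathcal{M}_S)+r(\mathcal{M}'_S)$ for every pair of bases, a single pair attains both extremes. Your version does close, but write out the substitution $r(\mathcal{M}^*_S\vee(\mathcal{M}'_S)^*)=(|S|-r(\mathcal{M}_S))+(|S|-r(\mathcal{M}'_S))-(|S|-r(\mathcal{M}_S\vee\mathcal{M}'_S))$ explicitly.

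In the first identity of part 2, state both inclusions for the independent sets of a union. A subset $Y$ of a base $b^1\cup b^2$ equals $(Y\cap b^1)\cup(Y\cap b^2)$. Conversely, $I^1\cup I^2$ lies in some $b^1\cup b^2$, which lies in an inclusion-maximal such union by finiteness. Enlarging each $I^i$ to a base does not by itself give a maximal union.

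For the contraction identity with $A\neq B$, the relation $(\mathcal{M}^1_A\wedge\mathcal{M}^2_B)^*=(\mathcal{M}^1_A)^*\vee(\mathcal{M}^2_B)^*$ is not literally the stated same-ground-set duality. It holds because the paper pads $\wedge$ with full matroids and $\vee$ with zero matroids, and $(\mathcal{M}^1_A\oplus\mathcal{F}_{B-A})^*=(\mathcal{M}^1_A)^*\oplus\mathbf{0}_{B-A}$. With that, $[(\mathcal{M}^1_A\wedge\mathcal{M}^2_B)\times X]^*=((\mathcal{M}^1_A)^*\vee(\mathcal{M}^2_B)^*)\circ X=(\mathcal{M}^1_A\times X)^*\vee(\mathcal{M}^2_B\times X)^*$, and dualizing on $X$ finishes.
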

\section{Proof of Theorem \ref{thm:idtmatroid}, extract from \cite{STHN2014}}
\label{sec:idtmatroid}

\begin{proof}
\begin{enumerate}
    \item Let $b^1_S \uplus b^2_Q$ be a base of LHS. Then there exist bases $b^1_S \uplus b_P^1$ and $b_P^2 \uplus b^2_Q$ of $\mathcal{M}_{SP}$ and $\mathcal{M}_{PQ},$ respectively, such that $b_P^1 \cap b_P^2$ is minimal, and among all such pairs of bases of $\mathcal{M}_{SP}$ and $\mathcal{M}_{PQ},$ the set $b^1_S \uplus b^2_Q$ is minimal.
    
    Since $\mathcal{M}_{SP} \wedge \mathcal{M}_{PQ} :\equiv (\mathcal{M}_{SP} \oplus \mathcal{F}_Q) \wedge (\mathcal{M}_{PQ} \oplus \mathcal{F}_S),$ the set $b^1_S \uplus b^2_Q \uplus (b_P^1 \cap b_P^2)$ is a minimal intersection of bases of the form $b_S \uplus b_P' \uplus Q$ and $S \uplus b_P'' \uplus b_Q$ of $(\mathcal{M}_{SP} \oplus \mathcal{F}_Q)$ and $(\mathcal{M}_{PQ} \oplus \mathcal{F}_S),$ respectively, and therefore is a base of $\mathcal{M}_{SP} \wedge \mathcal{M}_{PQ}.$
    
    Suppose $b^1_S \uplus b^2_Q$ is not a maximal intersection with $S \uplus Q$ among all bases of $\mathcal{M}_{SP} \wedge \mathcal{M}_{PQ}.$ Then there exist bases $\tilde{b}_S \uplus \tilde{b}_P^1$ and $\tilde{b}_P^2 \uplus \hat{b}_Q$ of $\mathcal{M}_{SP}$ and $\mathcal{M}_{PQ},$ respectively, such that $\tilde{b}_S \uplus \tilde{b}_Q \uplus (\tilde{b}_P^1 \cap \tilde{b}_P^2)$ is a base of $\mathcal{M}_{SP} \wedge \mathcal{M}_{PQ}$ and $\tilde{b}_S \uplus \tilde{b}_Q \supset b^1_S \uplus b^2_Q.$ But this implies $|\tilde{b}_P^1 \cap \tilde{b}_P^2| < |b_P^1 \cap b_P^2|,$ which is a contradiction. Hence $b^1_S \uplus b^2_Q$ is a base of RHS.

    Next, let $b^1_S \uplus b^2_Q$ be a base of RHS. Then there exist bases $b^1_S \uplus b_P^1$ and $b_P^2 \uplus b^2_Q$ of $\mathcal{M}_{SP}$ and $\mathcal{M}_{PQ},$ respectively, such that $b^1_S \uplus b^2_Q \uplus (b_P^1 \cap b_P^2)$ is a base of $\mathcal{M}_{SP} \wedge \mathcal{M}_{PQ}$ and further (since $b^1_S \uplus b^2_Q$ is a maximal intersection with $S \uplus Q$ among bases of $\mathcal{M}_{SP} \wedge \mathcal{M}_{PQ}$), for all such pairs of bases, $b_P^1 \cap b_P^2$ is minimal.
    
    Hence $b^1_S \uplus b^2_Q \uplus b_P^1 \uplus b_P^2$ is a base of $\mathcal{M}_{SP} \vee \mathcal{M}_{PQ}.$ Further, $b^1_S \uplus b^2_Q \uplus (b_P^1 \cap b_P^2),$ being a base of $\mathcal{M}_{SP} \wedge \mathcal{M}_{PQ},$ is minimal among all pairs of bases of the form $b_S \uplus b_P' \uplus Q$ and $b_P'' \uplus b_Q \uplus S$ of $\mathcal{M}_{SP} \oplus \mathcal{F}_Q$ and $\mathcal{M}_{PQ} \oplus \mathcal{F}_S,$ respectively. Therefore, for the given intersection $b_P^1 \cap b_P^2,$ the set $b^1_S \uplus b^2_Q$ is a minimal intersection with $S \uplus Q$ among bases of $\mathcal{M}_{SP} \vee \mathcal{M}_{PQ},$ i.e., it is a base of LHS.

    \item We can apply minor duality operators sequentially to obtain the result:
    \begin{align*}
        (\mathcal{M}_{SP} \leftrightarrow \mathcal{M}_{PQ})^* &= [(\mathcal{M}_{SP} \vee \mathcal{M}_{PQ}) \times (S \cup Q)]^* \\
        &= (\mathcal{M}_{SP} \vee \mathcal{M}_{PQ})^* \circ (S \cup Q) \\
        &= (\mathcal{M}_{SP}^* \wedge \mathcal{M}_{PQ}^*) \circ (S \cup Q) \\
        &= (\mathcal{M}_{SP}^* \vee \mathcal{M}_{PQ}^*) \times (S \cup Q) \\
        &= \mathcal{M}_{SP}^* \leftrightarrow \mathcal{M}_{PQ}^*
    \end{align*}
\end{enumerate}
\end{proof}

\begin{remark}
We note that if bases $b^1_S \cup b_P^1$ and $b_P^2 \cup b^2_Q$ of $\mathcal{M}_{SP}$ and $\mathcal{M}_{PQ},$ respectively, are such that $b_P^1 \cap b_P^2$ is minimal, in order that $b^1_S \cup (b_P^1 \cap b_P^2) \cup b^2_Q$ is a base of $\mathcal{M}_{SP} \wedge \mathcal{M}_{PQ},$ we need the additional condition that $b^1_S \cup b^2_Q$ is minimal among all bases of $\mathcal{M}_{SP} \vee \mathcal{M}_{PQ}.$

Similarly, if $b^1_S \cup (b_P^1 \cap b_P^2) \cup b^2_Q$ is a base of $\mathcal{M}_{SP} \wedge \mathcal{M}_{PQ},$ in order that $b^1_S \cup b_P^1 \cup b_P^2 \cup b^2_Q$ is a base of $\mathcal{M}_{SP} \vee \mathcal{M}_{PQ},$ we need the additional condition that $b^1_S \cup b^2_Q$ is maximal among such bases of $\mathcal{M}_{SP} \wedge \mathcal{M}_{PQ}.$
\end{remark}
\section{Proof of impossibility of minimal decomposition of matroid of Figure \ref{fig:matroid1}}
\label{sec:mindecompmatroid}

We will show that the matroid $\mathcal{M}_{SQ}$ on $\{e_1,e_2,e_3,e_4,e_5,e_6\}$ with $S :\equiv \{e_1,e_2,e_3\}$ and $Q :\equiv \{e_4,e_5,e_6\}$ cannot be minimally decomposed into $\mathcal{M}_{SP} \leftrightarrow_{\mathcal{M}} \mathcal{M}_{PQ}.$

If $\mathcal{M}_{SQ}$ is to be minimally decomposed as $\mathcal{M}_{SP} \leftrightarrow_{\mathcal{M}} \mathcal{M}_{PQ},$ we must have $|P| = r(\mathcal{M}_{SQ} \circ S) - r(\mathcal{M}_{SQ} \times S) = 2.$ Let $P :\equiv \{p_1,p_2\}.$

Now, by Theorem \ref{thm:lowerboundP}, we have that $r(\mathcal{M}_{SQ} \circ S) - r(\mathcal{M}_{SQ} \times S) \le r(\mathcal{M}_{SP} \circ S) - r(\mathcal{M}_{SP} \times S)$ and $r(\mathcal{M}_{SQ} \circ Q) - r(\mathcal{M}_{SQ} \times Q) \le r(\mathcal{M}_{PQ} \circ Q) - r(\mathcal{M}_{PQ} \times Q).$ Further, $r(\mathcal{M}_{SP} \circ S) - r(\mathcal{M}_{SP} \times S) = r(\mathcal{M}_{SP} \circ P) - r(\mathcal{M}_{SP} \times P) \le |P|$ and $r(\mathcal{M}_{PQ} \circ Q) - r(\mathcal{M}_{PQ} \times Q) = r(\mathcal{M}_{PQ} \circ P) - r(\mathcal{M}_{PQ} \times P) \le |P|.$

Therefore, in the matroids $\mathcal{M}_{SP}$ and $\mathcal{M}_{PQ},$ for a minimal decomposition, we must have $r(\mathcal{M}_{SP} \circ P) = r(\mathcal{M}_{PQ} \circ P) = |P|$ and $r(\mathcal{M}_{SP} \times P) = r(\mathcal{M}_{PQ} \times P) = 0.$ We infer the following.

\begin{enumerate}
    \item The elements $p_1, p_2$ cannot be parallel in $\mathcal{M}_{SP}$ or $\mathcal{M}_{PQ}$ since $r(\mathcal{M}_{SP} \circ P) = r(\mathcal{M}_{PQ} \circ P) = 2.$
    \item $\mathcal{M}_{SP}$ and $\mathcal{M}_{PQ}$ are compatible over $P.$
    \item An independent set of $\mathcal{M}_{SQ}$ contained in $S$ or $Q,$ respectively, must be contained in a base of $\mathcal{M}_{SP}$ or $\mathcal{M}_{PQ},$ respectively (Lemma \ref{lem:basecontainedcontains}).
    \item Using Lemma \ref{cor:minorspecial}, we have $\mathcal{M}_{SQ} \circ S = \mathcal{M}_{SP} \circ S, \quad \mathcal{M}_{SQ} \times S = \mathcal{M}_{SP} \times S, \quad \mathcal{M}_{SQ} \circ Q = \mathcal{M}_{PQ} \circ Q,$ and $\mathcal{M}_{SQ} \times Q = \mathcal{M}_{PQ} \times Q.$ Therefore, if a subset of $S$ or $Q$ is a circuit in $\mathcal{M}_{SP}$ or $\mathcal{M}_{PQ},$ respectively, it will remain a circuit in $\mathcal{M}_{SQ}.$ Therefore, an element $p_i$ cannot be parallel to two elements of $\mathcal{M}_{SP}$ or $\mathcal{M}_{PQ}$ since two distinct elements in $S$ or $Q$ are not parallel in $\mathcal{M}_{SQ}.$ 
\end{enumerate}

Now we consider various cases.

\begin{enumerate}
    \item In $\mathcal{M}_{SP},$ the elements $p_1, p_2$ are each independent of each of $e_2, e_3.$
    
    Now in $\mathcal{M}_{PQ},$ at most one of $p_1, p_2,$ say $p_1,$ can be parallel to $e_5.$ For $\mathcal{M}_{SP}$ and $\mathcal{M}_{PQ},$ choose bases $\{p_1, e_3\}$ and $\{p_2, e_5\},$ respectively. In $\mathcal{M}_{SP} \leftrightarrow_{\mathcal{M}} \mathcal{M}_{PQ},$ this will result in the base $\{e_3, e_5\}.$ But $e_3$ and $e_5$ are parallel in $\mathcal{M}_{SQ},$ a contradiction.

    \item In $\mathcal{M}_{SP},$ $p_1$ is parallel to $e_3.$ Then it cannot be parallel to $e_1$ or $e_2.$
    \begin{enumerate}
        \item In $\mathcal{M}_{PQ},$ the elements $p_1, p_2$ are each independent of each of $e_4, e_5.$ This is impossible arguing as in case 1 above.
        
        \item In $\mathcal{M}_{PQ},$ $p_1$ is parallel to $e_4.$ Then $p_1$ cannot be parallel to $e_5.$ For $\mathcal{M}_{SP}$ and $\mathcal{M}_{PQ},$ choose bases $\{p_2, e_3\}$ and $\{p_1, e_5\},$ respectively. This will result in the base $\{e_3, e_5\}$ of $\mathcal{M}_{SP} \leftrightarrow_{\mathcal{M}} \mathcal{M}_{PQ}.$ But $e_3$ and $e_5$ are parallel in $\mathcal{M}_{SQ},$ a contradiction.
        
        \item In $\mathcal{M}_{PQ},$ $p_1$ is parallel to $e_5.$ We consider two sub-possibilities.
        
        If $p_2$ is independent of $e_6,$ for $\mathcal{M}_{SP}$ and $\mathcal{M}_{PQ},$ choose bases $\{p_1, e_1\}$ and $\{p_2, e_6\},$ respectively. This will result in the base $\{e_1, e_6\}$ of $\mathcal{M}_{SP} \leftrightarrow_{\mathcal{M}} \mathcal{M}_{PQ}.$ But $e_1$ and $e_6$ are parallel in $\mathcal{M}_{SQ},$ a contradiction.
        
        If $p_2$ is parallel to $e_6,$ it cannot also be parallel to $e_4.$ For $\mathcal{M}_{SP}$ and $\mathcal{M}_{PQ},$ choose bases $\{p_1, e_2\}$ and $\{p_2, e_4\},$ respectively. This will result in the base $\{e_2, e_4\}$ of $\mathcal{M}_{SP} \leftrightarrow_{\mathcal{M}} \mathcal{M}_{PQ}.$ But $e_2$ and $e_4$ are parallel in $\mathcal{M}_{SQ},$ a contradiction.
        
        \item In $\mathcal{M}_{PQ},$ $p_2$ is parallel to $e_4$ or $e_5.$ Choose bases $\{p_1, e_1\}$ and $\{p_2, e_6\},$ respectively. This will result in the base $\{e_1, e_6\}$ of $\mathcal{M}_{SP} \leftrightarrow_{\mathcal{M}} \mathcal{M}_{PQ}.$ But $e_1$ and $e_6$ are parallel in $\mathcal{M}_{SQ},$ a contradiction.
    \end{enumerate}
\end{enumerate}

Therefore the matroid $\mathcal{M}_{SQ}$ on $\{e_1,e_2,e_3,e_4,e_5,e_6\}$ with $S :\equiv \{e_1,e_2,e_3\}$ and $Q :\equiv \{e_4,e_5,e_6\}$ cannot be minimally decomposed into $\mathcal{M}_{SP} \leftrightarrow_{\mathcal{M}} \mathcal{M}_{PQ}.$
\section{A Single Bipartition Augmentation Need Not Be\
Bipartition-Complete}

We present  a matroid $\mathcal{M}_{SQ} $ whose $\{S,Q\}$-augmentation
$Aug_{S,Q}(\mathcal{M}_{SQ})$ is not $\{S,Q\}$-complete.

Let
\[
 S=\{s_1,s_2,s_3\},
 \qquad
 Q=\{q_1,q_2,q_3\},
\]
and let $\mathcal{M}_{SQ}$ be the column matroid, over any field $F$, of the
matrix
$$\begin{array}{c|cccccc}
 &s_1&s_2&s_3&q_1&q_2&q_3\\ \hline
 &1&0&0&1&0&0\\
 &0&1&1&0&1&0\\
 &1&1&0&0&0&1
\end{array}.
$$
The matroid has rank three.  Let
$
 Y_{12}:\equiv \{q_1,q_2\},
 Y_{13}:\equiv \{q_1,q_3\},
 Y_{23}:\equiv \{q_2,q_3\}.
$
The bases having one element in $S$ and two elements in
$Q$ are exactly
$
 \{s_1\}\uplus Y_{23},
 \{s_1\}\uplus Y_{12},
 \{s_2\}\uplus Y_{12},
 \{s_2\}\uplus Y_{13},
 \{s_3\}\uplus Y_{13}.
$

A single augmentation
adds (among other bases) the
three bases
$
 \{s_2\}\uplus Y_{23},
 \{s_1\}\uplus Y_{13},
 \{s_3\}\uplus Y_{12}.
$
For example, $\{s_2\}\uplus Y_{23}$ is added because
$
 \{s_1\}\uplus Y_{12},
 \{s_2\}\uplus Y_{12},
 \{s_1\}\uplus Y_{23}
$
are bases of $\mathcal{M}_{SQ}$.

We note that,
$
 \{s_3\}\uplus Y_{23}=\{s_3,q_2,q_3\}
$
is not added in the first augmentation.  To see this, suppose it were added.
 Then there would be subsets $b_S\subseteq S$
and $b_Q\subseteq Q$ such that
$
 b_S\uplus b_Q,
 \{s_3\}\uplus b_Q,
 b_S\uplus Y_{23}
$
were bases of the original matroid.  Since all bases have size three,
$|b_S|=1$ and $|b_Q|=2$.  The displayed list of bases then forces
$b_Q=Y_{13}$, because $Y_{13}$ is the only two-element $Q$-part paired
with $\{s_3\}$, and it forces $b_S=\{s_1\}$, because $\{s_1\}$ is the
only one-element $S$-part paired with $Y_{23}$.  But
$\{s_1\}\uplus Y_{13}$ is not a base
 of $\mathcal{M}_{SQ}$, a contradiction.
Thus
$
 \{s_3\}\uplus Y_{23}$ is not a base of $Aug_{S,Q}(\mathcal{M}_{SQ}).$

Now the following three sets are bases of
$Aug_{S,Q}(\mathcal{M}_{SQ})$:
$
 \{s_2\}\uplus Y_{13},
 \{s_3\}\uplus Y_{13},
 \{s_2\}\uplus Y_{23}.
$
The first two were already bases of $\mathcal{M}_{SQ}$, and the third was added by
the augmentation.  However, as just proved,
$\{s_3\}\uplus Y_{23}$ is not a base
 of $Aug_{S,Q}(\mathcal{M}_{SQ}).$ 
Therefore $Aug_{S,Q}(\mathcal{M}_{SQ})$ is not
$\{S,Q\}$-complete.

\section{Proof of Lemma \ref{lem:pseudoid0}}
\label{sec:pseudoid}

\begin{proof}
\begin{enumerate}
    \item We have:
    \begin{align*}
        \mathcal{M}_{QQ'}^* &= (\mathcal{M}^*_{SQ} \leftrightarrow_{\mathcal{M}} (\mathcal{M}_{SQ})_{SQ'})^* \\
        &= \mathcal{M}_{SQ} \leftrightarrow_{\mathcal{M}} (\mathcal{M}_{SQ})^*_{SQ'}  \quad \text{(using Theorem \ref{thm:idtmatroid})}\\
        &= (\mathcal{M}_{SQ})^*_{SQ'} \leftrightarrow_{\mathcal{M}} \mathcal{M}_{SQ} \\
        &= (\mathcal{M}^*_{SQ} \leftrightarrow_{\mathcal{M}} (\mathcal{M}_{SQ})_{SQ'})_{Q'Q} = (\mathcal{M}_{QQ'})_{Q'Q}.
    \end{align*}
    (See the discussion on copies of matroids in Subsection \ref{subsec:copies}.)

    \item We find for the restriction minor:
    \begin{align*}
        \mathcal{M}_{QQ'} \circ Q &= (\mathcal{M}^*_{SQ} \leftrightarrow_{\mathcal{M}} (\mathcal{M}_{SQ})_{SQ'}) \circ Q \\
        &= (\mathcal{M}^*_{SQ} \leftrightarrow_{\mathcal{M}} (\mathcal{M}_{SQ})_{SQ'}) \leftrightarrow_{\mathcal{M}} \mathcal{F}_{Q'} \\
        &= \mathcal{M}^*_{SQ} \leftrightarrow_{\mathcal{M}} ((\mathcal{M}_{SQ})_{SQ'} \leftrightarrow_{\mathcal{M}} \mathcal{F}_{Q'}) \\
        &= \mathcal{M}^*_{SQ} \leftrightarrow_{\mathcal{M}} (\mathcal{M}_{SQ} \circ S) = \mathcal{M}^*_{SQ} \circ Q,
    \end{align*}
    using Lemma \ref{lem:basecontainedcontains}.
    
    Similarly, for the contraction minor:
    \begin{align*}
        \mathcal{M}_{QQ'} \times Q &= (\mathcal{M}^*_{SQ} \leftrightarrow_{\mathcal{M}} (\mathcal{M}_{SQ})_{SQ'}) \times Q \\
        &= (\mathcal{M}^*_{SQ} \leftrightarrow_{\mathcal{M}} (\mathcal{M}_{SQ})_{SQ'}) \leftrightarrow_{\mathcal{M}} \mathbf{0}_{Q'} \\
        &= \mathcal{M}^*_{SQ} \leftrightarrow_{\mathcal{M}} (\mathcal{M}_{SQ} \times S) = \mathcal{M}^*_{SQ} \times Q,
    \end{align*}
    using Lemma \ref{lem:basecontainedcontains}.

    \item We use the fact that $(\mathcal{M}_{AB})_{AB'} \circ A = \mathcal{M}_{AB} \circ A$ and $(\mathcal{M}_{AB})_{AB'} \times A = \mathcal{M}_{AB} \times A,$ where $A, B, B'$ are pairwise disjoint and $B'$ is a copy of $B.$
    
    Let $S'$ be a copy of $S$ disjoint from $Q,Q'.$  It is clear that $\mathcal{M}_{QQ'} =  (\mathcal{M}_{SQ})^*_{S'Q} \leftrightarrow_{\mathcal{M}}  (\mathcal{M}_{SQ})_{S'Q'} =(\mathcal{M}_{SQ})_{S'Q'} \leftrightarrow_{\mathcal{M}} (\mathcal{M}_{SQ})^*_{S'Q}.$ We now use the restricted associativity of linking (Lemma \ref{lem:restassoc}) to prove the restrictions.
    \begin{align*}
        (\mathcal{M}_{SQ} \leftrightarrow_{\mathcal{M}} \mathcal{M}_{QQ'}) \circ S &= (\mathcal{M}_{SQ} \leftrightarrow_{\mathcal{M}} (\mathcal{M}_{SQ})_{S'Q'} \leftrightarrow_{\mathcal{M}} (\mathcal{M}_{SQ})^*_{S'Q}) \leftrightarrow_{\mathcal{M}} \mathcal{F}_{Q'} \\
        &= (\mathcal{M}_{SQ} \leftrightarrow_{\mathcal{M}} (\mathcal{M}_{SQ})^*_{S'Q}) \leftrightarrow_{\mathcal{M}} ((\mathcal{M}_{SQ})_{S'Q'} \leftrightarrow_{\mathcal{M}} \mathcal{F}_{Q'}) \\
        &= (\mathcal{M}_{SQ} \leftrightarrow_{\mathcal{M}} (\mathcal{M}_{SQ})^*_{S'Q}) \leftrightarrow_{\mathcal{M}} ((\mathcal{M}_{SQ})_{S'Q'} \circ S') \\
        &= \mathcal{M}_{SQ} \leftrightarrow_{\mathcal{M}} ((\mathcal{M}_{SQ})^*_{S'Q} \leftrightarrow_{\mathcal{M}} (\mathcal{M}_{SQ})_{S'Q'} \circ S') \\
        &= \mathcal{M}_{SQ} \leftrightarrow_{\mathcal{M}} ((\mathcal{M}_{SQ})^*_{S'Q} \circ Q) = \mathcal{M}_{SQ} \circ S,
    \end{align*}
    using Lemma \ref{lem:basecontainedcontains}.
    
    Next, 
    \begin{align*}
        (\mathcal{M}_{SQ} \leftrightarrow_{\mathcal{M}} \mathcal{M}_{QQ'}) \circ Q' &= (\mathcal{M}_{SQ} \leftrightarrow_{\mathcal{M}} (\mathcal{M}_{SQ})_{S'Q'} \leftrightarrow_{\mathcal{M}} (\mathcal{M}_{SQ})^*_{S'Q}) \leftrightarrow_{\mathcal{M}} \mathcal{F}_{S} \\
        &= ((\mathcal{M}_{SQ})_{S'Q'} \leftrightarrow_{\mathcal{M}} (\mathcal{M}_{SQ})^*_{S'Q}) \leftrightarrow_{\mathcal{M}} (\mathcal{M}_{SQ} \leftrightarrow_{\mathcal{M}} \mathcal{F}_{S}) \\
        &= (\mathcal{M}_{SQ})_{S'Q'} \leftrightarrow_{\mathcal{M}} ((\mathcal{M}_{SQ})^*_{S'Q} \leftrightarrow_{\mathcal{M}} (\mathcal{M}_{SQ} \circ Q)) \\
        &= (\mathcal{M}_{SQ})_{S'Q'} \leftrightarrow_{\mathcal{M}} ((\mathcal{M}_{SQ})^*_{S'Q} \leftrightarrow_{\mathcal{M}} ((\mathcal{M}_{SQ})_{S'Q} \circ Q)) \\
        &= (\mathcal{M}_{SQ})_{S'Q'} \leftrightarrow_{\mathcal{M}} ((\mathcal{M}_{SQ})^*_{S'Q} \circ S') \\
        &= (\mathcal{M}_{SQ})_{S'Q'} \leftrightarrow_{\mathcal{M}} ((\mathcal{M}_{SQ})^*_{S'Q'} \circ S') \\
        &= (\mathcal{M}_{SQ})_{S'Q'} \circ Q' = (\mathcal{M}_{SQ} \circ Q)_{Q'},
    \end{align*}
    using Lemma \ref{lem:basecontainedcontains}.
    \item By part 1 and part 3 above we have $(\mathcal{M}^*_{SQ} \leftrightarrow_{\mathcal{M}} \mathcal{M}_{QQ'}^*) \circ S = \mathcal{M}^*_{SQ} \circ S.$ Using Theorem \ref{thm:idtmatroid}, this yields:
    \begin{align*}
        (\mathcal{M}_{SQ} \leftrightarrow_{\mathcal{M}} \mathcal{M}_{QQ'})^* \circ S &= \mathcal{M}^*_{SQ} \circ S \\
        ((\mathcal{M}_{SQ} \leftrightarrow_{\mathcal{M}} \mathcal{M}_{QQ'}) \times S)^* &= (\mathcal{M}_{SQ} \times S)^* \\
        (\mathcal{M}_{SQ} \leftrightarrow_{\mathcal{M}} \mathcal{M}_{QQ'}) \times S &= \mathcal{M}_{SQ} \times S.
    \end{align*}
    Again, by part 1 and part 3 above we have $(\mathcal{M}^*_{SQ} \leftrightarrow_{\mathcal{M}} \mathcal{M}_{QQ'}^*) \circ Q' = (\mathcal{M}^*_{SQ} \circ Q)_{Q'}.$ Using Theorem \ref{thm:idtmatroid}, this yields:
    \begin{align*}
        (\mathcal{M}_{SQ} \leftrightarrow_{\mathcal{M}} \mathcal{M}_{QQ'})^* \circ Q' &= (\mathcal{M}^*_{SQ} \circ Q)_{Q'} \\
        ((\mathcal{M}_{SQ} \leftrightarrow_{\mathcal{M}} \mathcal{M}_{QQ'}) \times Q')^* &= (\mathcal{M}_{SQ} \times Q)_{Q'}^* \\
        (\mathcal{M}_{SQ} \leftrightarrow_{\mathcal{M}} \mathcal{M}_{QQ'}) \times Q' &= (\mathcal{M}_{SQ} \times Q)_{Q'}.
    \end{align*}
    \item Let $b_{S'}$ and $b_{Q'}$ be copies of $b_S,$  $b_Q,$ respectively.
Let $b_{S} \uplus b_{Q'}$ be a base of $(\mathcal{M}_{SQ})_{SQ'}$ so that $b_{S'} \uplus b_Q$ is a base of $(\mathcal{M}_{SQ})_{S'Q}.$  
 We therefore have a base $(S' - b_{S'}) \uplus (Q - b_Q)$
of  $(\mathcal{M}_{SQ})^*_{S'Q}.$ 
    
    Since $b_S \uplus b_Q$ is a base of $\mathcal{M}_{SQ}$ and $(S' - b_{S'}) \uplus (Q - b_Q)$ is a base of $(\mathcal{M}^*_{SQ})_{S'Q},$ we have that $b_S \uplus (S' - b_{S'})$ is a base of $(\mathcal{M}_{SQ} \vee (\mathcal{M}^*_{SQ})_{S'Q}) \times (S \uplus S') = (\mathcal{M}_{SQ} \leftrightarrow_{\mathcal{M}} (\mathcal{M}^*_{SQ})_{S'Q}).$
    
    Now $b_{S'} \uplus b_{Q'}$ is a base of $(\mathcal{M}_{SQ})_{S'Q'}.$ Therefore, $b_S \uplus b_{Q'}$ is a base of $(\mathcal{M}_{SQ})_{S'Q'} \leftrightarrow_{\mathcal{M}}(\mathcal{M}_{SQ} \leftrightarrow_{\mathcal{M}} (\mathcal{M}^*_{SQ})_{S'Q}) = $ $(\mathcal{M}_{SQ} \leftrightarrow_{\mathcal{M}} (\mathcal{M}^*_{SQ})_{S'Q}) \leftrightarrow_{\mathcal{M}} (\mathcal{M}_{SQ})_{S'Q'} = \mathcal{M}_{SQ} \leftrightarrow_{\mathcal{M}} ((\mathcal{M}^*_{SQ})_{S'Q} \leftrightarrow_{\mathcal{M}}(\mathcal{M}_{SQ})_{S'Q'} ) =\mathcal{M}_{SQ} \leftrightarrow_{\mathcal{M}} \mathcal{M}_{QQ'}.$
\end{enumerate}
\end{proof}

\end{document}